\newcommand{\C}{\ensuremath{\mathbb{C}}}
\newcommand{\E}{\ensuremath{\mathbb{E}}}
\newcommand{\N}{\ensuremath{\mathbb{N}}}
\newcommand{\R}{\ensuremath{\mathbb{R}}}
\newcommand{\Z}{\ensuremath{\mathbb{Z}}}
\renewcommand{\leq}{\ensuremath{\leqslant}}
\renewcommand{\geq}{\ensuremath{\geqslant}}
\newcommand{\qed}{\hfill \vrule height6pt  width6pt depth0pt}
\newcommand{\norm}[1]{\left\Vert#1\right\Vert}
\newcommand{\ot}{\otimes}
\newcommand{\epsi}{\varepsilon}
\newcommand{\ovl}{\overline}
\newcommand{\rad}{\mathrm{rad}}
\newcommand{\supp}{\mathrm{supp}}
\newcommand{\dist}{\mathrm{dist}}
\newcommand{\Id}{\mathrm{Id}}
\newcommand{\HI}{H^\infty}
\newcommand{\Hor}{\mathcal{H}}
\newcommand{\type}{\mathrm{type}\,}
\newcommand{\cotype}{\mathrm{cotype}\,}
\newcommand{\dyad}{\varphi}
\newcommand{\cutoff}{\chi}
\newtheorem{thm}{Theorem}[section]
\newtheorem{defi}[thm]{Definition}
\newtheorem{prop}[thm]{Proposition}
\newtheorem{cor}[thm]{Corollary}
\newtheorem{lemma}[thm]{Lemma}
\newtheorem{remark}[thm]{Remark}
\newtheorem{ass}[thm]{Assumption}
\newenvironment{proof}[1][]{\noindent {\it Proof #1} : }{\hbox{~}\qed
\smallskip
}
\numberwithin{equation}{section}
\begin{document}
\selectlanguage{english}
\title{\bfseries{Maximal H\"ormander Functional Calculus on $L^p$ spaces and UMD lattices}}
\date{June 2021}
\author{\bfseries{Luc Deleaval and Christoph Kriegler}}

\maketitle

\begin{abstract}
Let $A$ be a generator of an analytic semigroup having a H\"ormander functional calculus on $X = L^p(\Omega,Y)$, where $Y$ is a UMD lattice.
Using methods from Banach space geometry in connection with functional calculus, we show that for H\"ormander spectral multipliers decaying sufficiently fast at $\infty$, there holds a maximal estimate $\| \sup_{t > 0} |m(tA)f|\, \|_{L^p(\Omega,Y)} \lesssim \|f\|_{L^p(\Omega,Y)}$.
We also show square function estimates $\left\| \left( \sum_k \sup_{t > 0} |m_k(tA)f_k|^2 \right)^{\frac12} \right\|_{L^p(\Omega,Y)} \lesssim \left\| \left( \sum_k |f_k|^2 \right)^{\frac12} \right\|_{L^p(\Omega,Y)}$ for suitable families of spectral multipliers $m_k$, which are even new for the euclidean Laplacian on scalar valued $L^p(\R^d)$.
As corollaries, we obtain maximal estimates for wave propagators  and Bochner-Riesz means.
Finally, we illustrate the results by giving several examples of operators $A$ that admit a H\"ormander functional calculus on some $L^p(\Omega,Y)$ and discuss examples of lattices $Y$ and non-self-adjoint operators $A$ fitting our context.
\end{abstract}


\makeatletter
 \renewcommand{\@makefntext}[1]{#1}
 \makeatother
 \footnotetext{
 {\it Mathematics subject classification:}
 42A45, 42B25, 47A60.
\\
{\it Key words}: Spectral multiplier theorems, UMD valued $L^p$ spaces, maximal estimates.}

 \tableofcontents

\section{Introduction}

Let $m$ be a bounded function on $(0,\infty)$ and $u(m)$ the operator on $L^p(\R^d)$ defined by $[m(-\Delta)g]\hat{\phantom{i}} = [u(m)g]\hat{\phantom{i}}=m(\|\xi\|^2)\hat{g}(\xi).$
H\"ormander's theorem on Fourier multipliers \cite[Theorem 2.5]{Hor} asserts that $u(m) : L^p(\R^d) \to L^p(\R^d)$ is bounded for any $p \in (1,\infty)$ provided that for some integer $\alpha$ strictly larger than $\frac{d}{2},$
\begin{equation}
\label{equ-intro-classical-Hormander}
\|m\|_{\Hor^\alpha_2}^2 := \max_{k=0,1,\ldots,\alpha} \sup_{R > 0} \frac{1}{R}\int_{R}^{2R} \Bigl| t^k \frac{d^k}{dt^k} m(t) \Bigr|^2 \,dt < \infty.
\end{equation}
This theorem has many refinements and generalisations to various similar contexts.
Namely, one can generalise to non-integer $\alpha$ in \eqref{equ-intro-classical-Hormander} to get larger (for smaller $\alpha$) admissible classes $\Hor^\alpha_2 = \{ m : (0,\infty) \to \C \text{ bounded and continuous} :\: \|m\|_{\Hor^\alpha_2} < \infty \}$ of multiplier functions $m$ (see Subsection \ref{subsec-abstract-Hormander}).
Moreover, it has been a deeply studied question over the last years to what extent one can replace the ordinary Laplacian subjacent to H\"ormander's theorem by other operators $A$ acting on some $L^p(\Omega)$ space.
A theorem of H\"ormander type holds true for many elliptic differential operators $A,$ including sub-Laplacians on Lie groups of polynomial growth, Schr\"odinger operators and elliptic operators on Riemannian manifolds, see \cite{ACMM,Alex,ChHa,Christ,Duong,DuOS,Mul,MuSt}.
More recently, spectral multipliers have been studied for operators acting on $L^p(\Omega)$ only for a strict subset of $(1,\infty)$ of exponents $p$ \cite{Bl,CDLWY,CDY,CO,COSY,KuUhl,KU2,SYY} and for abstract operators acting on Banach spaces \cite{KrW3}.
A spectral multiplier theorem means then that the linear and multiplicative mapping
\begin{equation}
\label{equ-intro-homomorphism}
\Hor^\alpha_2 \to B(X), \: m \mapsto m(A),
\end{equation}
e.g. extending the ad hoc functional calculus for rational functions $m$ with poles outside $[0,\infty)$,
is bounded, where typically $X = L^p(\Omega).$
Following the theory of Fourier multipliers on Bochner spaces developed by Hyt\"onen \cite{Hy1,Hy2} and Girardi and Weis \cite{GiWe} (see also more recently Rozendaal and Veraar \cite{RoVe}), one can also ask if the calculus \eqref{equ-intro-homomorphism} can be extended to $X = L^p(\Omega,Y)$ where $Y$ is a UMD lattice.
This is answered positively in \cite{DKK} if $A$ generates a self-adjoint semigroup satisfying (generalised) Gaussian estimates.
This extends a large class of examples hereabove to a vector valued context \cite[Section 5]{DKK}.
The operator $A$ appearing in \cite{DKK} is always a $c_0$-semigroup generator on $X$, a framework that we keep in the present article.
More precisely, we consider $0$-sectorial operators $A$ that are negative generators of analytic semigroups on $\C_+$, thus allowing the machinery of $\HI(\Sigma_\omega)$ calculus for any angle $\omega \in (0,\pi)$.
Although most of the examples of such operators are self-adjoint on $L^2(\Omega)$, there are also non-self-adjoint natural ones in the context of weighted $L^2$ space, see Subsection \ref{subsec-examples-GE}.
For an operator $A$ that has a functional calculus on (a subspace of) a UMD lattice $Y(\Omega')$ but not on $L^p(\Omega')$  for $1 < p \neq 2 < \infty$, see the example in Subsection \ref{subsec-examples-UMD-lattices}.

One of the main results of the present article is to \emph{deduce} from a calculus in \eqref{equ-intro-homomorphism} for an abstract generator $A$ a stronger statement of \emph{maximal} bound of the form
\begin{equation}
\label{equ-intro-maximal-Hor}
\left\| \sup_{t > 0} |m(tA) f| \, \right\|_{L^p(\Omega,Y)} \leq C \|f\|_{L^p(\Omega,Y)}
\end{equation}
for \emph{certain} H\"ormander spectral multipliers (see Corollary \ref{cor-main-C0} and Proposition \ref{prop-main-exp}).
Early results of the type \eqref{equ-intro-maximal-Hor} in the euclidean case for non-special spectral multipliers $m$ are due to Carbery \cite{Crb}, Dappa and Trebels \cite{DaTr}, Seeger \cite{See} and to Rubio de Francia \cite{RdF2}.
It is known already from \cite{CGHS} in the euclidean case, $A$ the Laplacian operator and $Y = \C$ that \eqref{equ-intro-maximal-Hor} cannot hold for all $\Hor^\alpha_2$ H\"ormander multipliers $m$, even for a large prescribed derivation order $\alpha$ in \eqref{equ-intro-classical-Hormander}.
Other assumptions are therefore needed.

In this direction, if $\Omega = G$ is a stratified Lie group and $A$ is a left invariant sublaplacian, Mauceri and Meda proved in \cite[Theorem 2.6]{MauMeda} that \eqref{equ-intro-maximal-Hor} holds provided that
\begin{equation}
\label{equ-intro-summability}
\sum_{n \in \Z} \|m(2^n \cdot) \dyad_0\|_{W^c_2(\R)} < \infty
\end{equation}
where $\dyad_0 \in C^\infty_c(0,\infty)$ satisfies $\dyad_0(t) = 1$ for $t \in (1,2)$, $W^c_2(\R)$ stands for the usual Sobolev space with derivation exponent $c > Q(\frac1p - \frac12) + \frac12$ $(1 < p \leq 2)$ or $c > (Q-1)(\frac12 - \frac1p) + \frac12$ $(2 \leq p \leq \infty)$, $Q$ denoting the homogeneous dimension of the group.
Note that it is well-known that if the sum over $n$ in \eqref{equ-intro-summability} is replaced by the supremum, then one obtains the natural generalisation of condition \eqref{equ-intro-classical-Hormander} for non-integer $c > \frac12$.

A more recent result guaranteeing \eqref{equ-intro-maximal-Hor} for the usual Laplace operator on euclidean space is that of \cite[Theorem 1.2, Corollary 1.3]{CGHS}, where summation \eqref{equ-intro-summability} is relaxed to 
\[ \sum_{n \in \Z} \frac{1}{|n|+1}\|m(2^n\cdot)\dyad_0\|_{W^{\frac{d}{\min(p,2)}}_2(\R)} < \infty\]
(even relaxed to a certain rearrangement $m(2^n\cdot) \leadsto m(2^{k_n} \cdot)$ that minimizes the sum).
In \cite{GHS}, a similar condition to the above is shown to yield a maximal estimate, with a slightly different Sobolev space and where the prefactor is $\frac{1}{(|n|+1)\log(|n|+2)}$.
Moreover, in \cite{Choi}, Choi extends this result again to the Mauceri-Meda setting of left-invariant sublaplacians on stratified Lie groups under a slightly more restrictive summation condition involving a supplementary $\log(|n|+2)$ factor in the numerator.

Our first main result reads as follows.
Here and henceforth we write in short $L^p(Y)$ for the Bochner space $L^p(\Omega,Y)$.
We also use frequently $\Lambda^\beta_{2,2}(\R_+) = \{ f : \R_+ \to \C,\: f \circ \exp \in W^\beta_2(\R) \}$ which is the usual Sobolev space transferred to $\R_+$ via the exponential function, and $\|f\|_{\Lambda^\beta_{2,2}(\R_+)} = \|f\circ \exp\|_{W^\beta_2(\R)}$.

\begin{thm}[see Theorem \ref{thm-main} and Corollary \ref{cor-main-full-support}]
\label{equ-intro-main-theorem}
Let $Y = Y(\Omega')$ be a UMD lattice, $1 < p < \infty$ and $(\Omega,\mu)$ a $\sigma$-finite measure space.
Let $\beta > \frac12$.
Let $A$ be a $0$-sectorial operator on $L^p(Y)$.
Assume that $A$ has a $\Hor^\alpha_2$ calculus on $L^p(Y)$ for some $\alpha > \frac12$.
Let $m \in W^{c}_2(\R)$ be a spectral multiplier with $m(0) = 0$ and $c > \alpha + \max\left(\frac12,\frac{1}{\type L^p(Y)} - \frac{1}{\cotype L^p(Y)}\right) + \frac12 + \beta$ such that
\begin{equation}
\label{equ-1-intro-thm-main}
\sum_{n \in \Z} \|m(2^n \cdot) \dyad_0 \|_{W^c_2(\R)} < \infty
\end{equation}
for some $\dyad_0 \in C^\infty_c(0,\infty)$ with $\dyad_0(t) = 1$ for $t \in (1,2)$.
Then for a.e. $(x,\omega) \in \Omega \times \Omega'$, $t \mapsto m(tA)f(x,\omega)$ belongs to $C_0(\R_+)$ and
\begin{equation}
\label{equ-2-intro-thm-main}
\| \sup_{t > 0} |m(tA)f|\,\|_{L^p(Y)} \lesssim \| t \mapsto m(tA)f\|_{L^p(Y(\Lambda^\beta_{2,2}(\R_+)))} \lesssim \sum_{n \in \Z} \|m(2^n \cdot) \dyad_0 \|_{W^c_2(\R)}\|f\|_{L^p(Y)}.
\end{equation}
\end{thm}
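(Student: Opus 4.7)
The plan is to prove the two inequalities in \eqref{equ-2-intro-thm-main} separately. For the first inequality, together with the $C_0$-regularity claim, I would invoke the scalar Sobolev embedding $W^\beta_2(\R) \hookrightarrow C_0(\R)$, valid for $\beta > \frac12$. Transferring via the exponential change of variable yields $\Lambda^\beta_{2,2}(\R_+) \hookrightarrow C_0(\R_+)$, so that pointwise in $(x,\omega) \in \Omega \times \Omega'$,
\[\sup_{t > 0}|m(tA)f(x,\omega)| \leq C\bigl\|t \mapsto m(tA)f(x,\omega)\bigr\|_{\Lambda^\beta_{2,2}(\R_+)},\]
and $t \mapsto m(tA)f(x,\omega) \in C_0(\R_+)$ almost everywhere. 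Taking the $L^p(Y)$-norm of the pointwise bound and using the lattice structure of $Y$ produces the first half of \eqref{equ-2-intro-thm-main}.

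For the second inequality I would use a dyadic decomposition in the spectrum. Fix a smooth partition of unity $1 = \sum_{n \in \Z}\dyad_n$ on $(0,\infty)$ with $\dyad_n(\lambda) = \dyad_0(2^{-n}\lambda)$, where we may assume $\dyad_0$ is compactly supported in a fixed dyadic annulus while still equalling $1$ on $(1,2)$. Setting $m_n := m \cdot \dyad_n$, the triangle inequality reduces the task to a single-scale bound, uniform in $n \in \Z$:
\[\|t \mapsto m_n(tA)f\|_{L^p(Y(\Lambda^\beta_{2,2}(\R_+)))} \lesssim \|m(2^n\cdot)\dyad_0\|_{W^c_2(\R)}\|f\|_{L^p(Y)},\]
after which summing via \eqref{equ-1-intro-thm-main} concludes. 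The change of variable $s = 2^{-n}t$, combined with the invariance of $\Lambda^\beta_{2,2}(\R_+)$ under dyadic dilations, reduces this further to the case $n = 0$: for every $\tilde m$ with compact support in a fixed subinterval of $(0,\infty)$,
\[\|s \mapsto \tilde m(sA)f\|_{L^p(Y(\Lambda^\beta_{2,2}(\R_+)))} \lesssim \|\tilde m\|_{W^c_2(\R)}\|f\|_{L^p(Y)}.\]

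The heart of the argument is this compactly supported single-scale estimate. I would attack it by performing a Littlewood--Paley decomposition in the exponential variable $\log s$, converting the $\Lambda^\beta_{2,2}$-norm in $s$ into a square function over the resulting frequency pieces, each of which can be expressed as a Mellin-type operator involving the imaginary powers $A^{iu}$ and controlled via the $\Hor^\alpha_2$ calculus of $A$ on $L^p(Y)$. The excess of derivatives $c - \alpha > \max\bigl(\frac12,\,\frac{1}{\type L^p(Y)} - \frac{1}{\cotype L^p(Y)}\bigr) + \frac12 + \beta$ is tailored to absorb, in order, the $\beta$ derivatives measured in $s$, an additional $\frac12$ loss inherent in upgrading a calculus estimate on $A$ to an estimate for the whole family $\{\tilde m(sA)\}_{s > 0}$, and the type/cotype gap that appears when exchanging an $\ell^2$-aggregation over Littlewood--Paley frequencies with the lattice structure of $Y$.

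The main obstacle I foresee is precisely this last exchange: moving between the scalar-valued $\Lambda^\beta_{2,2}$-norm in $s$ and the $L^p(Y)$-norm requires a Hyt\"onen--Weis-type square-function calculus adapted to the UMD lattice $Y$, and the passage between the $\ell^2$-sum over frequency pieces and the $Y$-lattice Rademacher (or square) function is what forces the appearance of $\frac{1}{\type L^p(Y)} - \frac{1}{\cotype L^p(Y)}$ in the derivative count.
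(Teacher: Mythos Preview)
Your handling of the first inequality in \eqref{equ-2-intro-thm-main} via the Sobolev embedding $\Lambda^\beta_{2,2}(\R_+)\hookrightarrow C_0(\R_+)$, and your dyadic reduction of the second inequality to a single-scale estimate for compactly supported $\tilde m$, are exactly what the paper does (Corollaries~\ref{cor-main-C0} and~\ref{cor-main-full-support}).

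Where your sketch diverges is in the core single-scale estimate. You propose a Littlewood--Paley decomposition in the dilation variable $\log s$ together with a Mellin representation via imaginary powers $A^{iu}$. The paper proceeds differently: it applies the Paley--Littlewood decomposition in the \emph{spectral} variable of $A$, i.e.\ inserts $\sum_n \dyad_n(A)$ inside $\|t\mapsto \tilde m(tA)f\|_{L^p(Y(\Lambda^\beta))}$ (Lemma~\ref{lem-Hormander-calculus-Paley-Littlewood}). The point is that $\dyad_n(A)\tilde m(tA)$ vanishes unless $t\sim 2^{-n}$, so after a dilation in $t$ one can insert a fixed cutoff $\psi(t)$ and reduce everything to $R$-boundedness of the family $\{t\mapsto \psi(t)\tilde m(2^n tA):n\in\Z\}$ from $L^p(Y)$ to $L^p(Y(\Lambda^\beta))$. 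This $R$-bound is then obtained from the Fourier inversion formula $\tilde m(2^ntA)=\frac{1}{2\pi}\int \hat{\tilde m}(s/t)\,t^{-1}\exp(i2^nsA)\,ds$ (Lemma~\ref{lem-representation-formula-wave-operators}), together with the $R$-boundedness of the damped wave operators $(1+2^nA)^{-\gamma}\exp(i2^nsA)$ supplied by the self-improved $R$-bounded $\Hor^\gamma_2$ calculus (Proposition~\ref{prop-Hormander-calculus-to-R-Hormander-calculus}, Lemma~\ref{lem-R-bounded-wave-operators}). So the paper uses wave propagators $\exp(isA)$, not imaginary powers $A^{iu}$.

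Your imaginary-powers route is not wrong in spirit---the paper itself notes (Remark after Proposition~\ref{prop-main-exp}) that Wr\'obel's argument in \cite{Wro} is based on $A^{iu}$---but as written it leaves a gap: a Littlewood--Paley decomposition in $\log s$ does not by itself let you exchange the $\Lambda^\beta$-norm with $L^p(Y)$. What makes the paper's argument go through is precisely that the decomposition is in the $A$-variable, producing an honest square function in $L^p(Y)$ that $R$-boundedness can act on. Your accounting of the derivative loss is roughly right, but the $\max(\frac12,\frac{1}{\type}-\frac{1}{\cotype})$ in the paper comes entirely from the passage to an $R$-bounded calculus (Proposition~\ref{prop-Hormander-calculus-to-R-Hormander-calculus}), while the remaining $\frac12+\beta$ arises from estimating $\|h_s\|_{\Lambda^\beta}$ in the wave-operator kernel.
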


Here as in \cite{MauMeda} we observe that we have a continuous injection $\Lambda^\beta_{2,2}(\R_+) \hookrightarrow C_0(\R_+)$, the latter space being naturally equipped with the supremum norm.
The strategy of the proof of Theorem \ref{equ-intro-main-theorem}, valid for any semigroup generator with H\"ormander calculus, is to exploit the abstract approach of that functional calculus due to the second author and Weis \cite{KrW3}.
It allows in the general context of Theorem \ref{equ-intro-main-theorem} above to expand the norm in $L^p(Y(\Lambda^\beta_{2,2}(\R_+)))$ via the Littlewood-Paley decomposition (see Lemma \ref{lem-Hormander-calculus-Paley-Littlewood}) into pieces of compactly supported spectral multipliers.
This explains already the use of $\Lambda^\beta_{2,2}(\R_+)$ which is a Hilbert space, thus having much nicer geometrical properties than $C_0(\R_+)$, and being more appropriate to our Banach space geometrical proof.
A second part is to use for these compactly supported spectral multipliers a representation formula from Lemma \ref{lem-representation-formula-wave-operators} below which in turn allows to transfer the so-called $R$-boundedness from the spectral multipliers $(1 + 2^n A)^{-\gamma} \exp(i2^n t A)$ to the Littlewood-Paley pieces.
The notion of $R$-boundedness, well-established by now as being of fundamental importance in the context of functional calculus \cite{KW04} is explained in Subsection \ref{subsec-R-boundedness} and allows to reassemble the Littlewood-Paley pieces and thus to conclude \eqref{equ-2-intro-thm-main}.

Note that the quantities $\type L^p(Y) \in (1,2]$ and $\cotype L^p(Y) \in [2,\infty)$ in the hypotheses of Theorem \ref{equ-intro-main-theorem} refer to Rademacher (co-)type explained in Subsection \ref{subsec-R-boundedness} and are again of Banach space geometrical nature suitable for our context.
The term $\frac{1}{\type L^p(Y)} - \frac{1}{\cotype L^p(Y)}$ in Theorem \ref{equ-intro-main-theorem} is then due to the use of $R$-boundedness of $\Hor^\alpha_2$ spectral multipliers in the proof.
We do not expect that Theorem \ref{equ-intro-main-theorem} comes in this form with optimal exponent $c$, however the square function estimates that we obtain in Theorem \ref{thm-intro-main-C0} below with same assumptions as Theorem \ref{equ-intro-main-theorem} necessarily imply type and cotype in a form as in Theorem \ref{equ-intro-main-theorem}.
Indeed in Proposition \ref{prop-type-cotype-necessary}, we show that $R$-bounded $\Hor^\alpha_s$ calculus (= square function estimates) of $-\Delta \ot \Id_{L^r(\Omega')}$ on $L^2(\R,L^r(\Omega'))$ necessarily implies that $\alpha \geq \frac{1}{\type L^r(\Omega')} - \frac{1}{\cotype L^r(\Omega')}$.
Note that optimal exponents of H\"ormander calculus is a difficult question.
Already in the non-maximal form \eqref{equ-intro-homomorphism}, scalar case $Y = \C$ and $A = - \Delta$ the euclidean Laplacian on $L^p(\R^d)$, the optimal parameter $\alpha$ is not known today.
Moreover if that optimal parameter had the expected value $\alpha > \max\left( d \left| \frac1p - \frac12 \right|,\frac12 \right)$, this would solve the famous Bochner-Riesz conjecture.

In view of applications of Theorem \ref{equ-intro-main-theorem} above, note that if $m$ is e.g. continuous on $[0,\infty)$, then \eqref{equ-1-intro-thm-main} forces $m(0)$ to be equal to $0$.
However, $m(0) \neq 0$ should not be the major obstacle for a maximal estimate \eqref{equ-2-intro-thm-main} to hold.
Indeed, it is well-known that if $A$ generates a sub-markovian semigroup, then $\|\sup_{t > 0} |e^{-tA}f| \,\|_{L^p(\Omega)} \lesssim \|f\|_{L^p(\Omega)}$ holds for $1 < p < \infty$, whereas $m(0) = 1$ for $m(\lambda) = e^{-\lambda}$ in this case.
See also \cite[Theorem 2]{Xu2015} for an extension of this result to the case $X = L^p(\Omega,Y)$, $Y$ being a UMD lattice, $A$ of the special form $A = A_0 \otimes \Id_Y$ and $\exp(-tA_0)$ being regular contractive.
We therefore extend for such regular contractive semigroups Theorem \ref{equ-intro-main-theorem} to spectral multipliers sufficiently smooth on $[0,1]$ and satisfying only the one-sided summability $\sum_{n = 0}^\infty \|m(2^n \cdot)\dyad_0\|_{W^c_2(\R)} < \infty$ (see Proposition \ref{prop-main-exp}).
Note that Wr\'obel \cite[Theorem 3.1]{Wro} has obtained a similar result in the scalar case $Y = \C$ with slightly different parameters in $W^c_q(\R)$ of differentiation $c$ and integration $q \neq 2$.

As a corollary, specialising $m(\lambda)$ to the wave spectral multiplier or the Bochner-Riesz spectral multiplier, we obtain then

\begin{cor}[see Corollary \ref{cor-wave-Bochner-Riesz-maximal}]
\label{cor-intro-wave-Bochner-Riesz-maximal}
Assume that $Y$ is a UMD lattice, $1 < p < \infty$ and $(\Omega,\mu)$ a $\sigma$-finite measure space.
Let $A = A_0 \otimes \Id_Y$ be a $0$-sectorial operator on $L^p(Y)$ and assume that $A$ has a $\Hor^\alpha_2$ calculus on $L^p(Y)$ for some $\alpha > \frac12$.
Finally, assume that $\exp(-tA_0)$ is lattice positive and contractive on $L^p(\Omega)$.
Let
\[\delta  > \alpha + \max \left( \frac12, \frac{1}{\type L^p(Y)} - \frac{1}{\cotype L^p(Y)} \right) + 1. \]
Then the wave operators associated with $A$ satisfy the maximal estimate
\[ \left\| \sup_{t > 0} \left| (1  + t A)^{-\delta} \exp(itA) f \right| \, \right\|_{L^p(Y)} \leq C \| f \|_{L^p(Y)} . \]
Moreover, let 
\[ \gamma > \alpha + \max \left( \frac12, \frac{1}{\type L^p(Y)} - \frac{1}{\cotype L^p(Y)} \right) 
+ \frac12 . \]
Then the Bochner-Riesz means associated with $A$ satisfy the maximal estimate
\[ \left\| \sup_{t > 0} \left| \left( 1 - \frac{A}{t} \right)_+^\gamma f \right| \, \right\|_{L^p(Y)} \leq C \| f \|_{L^p(Y)} . \]
\end{cor}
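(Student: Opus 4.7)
The plan is to apply Proposition \ref{prop-main-exp} to the two explicit spectral multipliers $m_w(\lambda) := (1+\lambda)^{-\delta} \exp(i\lambda)$ and $m_{BR}(\lambda) := (1-\lambda)_+^\gamma$, which correspond respectively to the wave operators $(1+tA)^{-\delta} \exp(itA) = m_w(tA)$ and, after the trivial substitution $s = 1/t$ inside the supremum, to the Bochner-Riesz means $(1-A/t)_+^\gamma = m_{BR}(sA)$. Both multipliers satisfy $m(0) = 1 \neq 0$, so Theorem \ref{equ-intro-main-theorem} does not apply directly; however the assumption that $\exp(-tA_0)$ is lattice positive and contractive on $L^p(\Omega)$ tensors up to make $\exp(-tA) = \exp(-tA_0) \otimes \Id_Y$ regular contractive on $L^p(Y)$, which is precisely the extra ingredient needed by Proposition \ref{prop-main-exp}.

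For each multiplier one must verify sufficient smoothness on $[0,1]$ together with the one-sided summability
\[
\sum_{n \geq 0} \|m(2^n \cdot) \dyad_0\|_{W^c_2(\R)} < \infty
\]
for some $c$ slightly larger than $\alpha + \max\bigl(\frac{1}{2}, \frac{1}{\type L^p(Y)} - \frac{1}{\cotype L^p(Y)}\bigr) + 1$ (equivalent to pushing $\beta > \frac{1}{2}$ as close to $\frac{1}{2}$ as possible in Theorem \ref{equ-intro-main-theorem}). For $m_w$, smoothness on $[0,\infty)$ is immediate, and the pointwise bound $|\partial_\lambda^k m_w(\lambda)| \lesssim (1+\lambda)^{-\delta}$ for $0 \leq k \leq \lceil c \rceil$, combined with a Leibniz expansion over $\supp \dyad_0 \subset [a,b] \subset (0,\infty)$, gives
\[
\|m_w(2^n \cdot) \dyad_0\|_{W^c_2(\R)} \lesssim 2^{nc} \cdot 2^{-n\delta} = 2^{n(c - \delta)},
\]
which is summable over $n \geq 0$ if and only if $\delta > c$; sending $c$ down to $\alpha + \max(\ldots) + 1$ recovers the stated hypothesis on $\delta$.

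For $m_{BR}$, smoothness on $[0,1)$ is clear, and near $\lambda = 1$ the function lies in $W^c_2(\R)$ as soon as $\gamma > c - \frac{1}{2}$ (a standard Fourier-side computation: the Fourier transform of $(1-\lambda)_+^\gamma$ decays like $|\xi|^{-\gamma-1}$ at infinity). Since $m_{BR}$ is supported in $[0,1]$ while $\dyad_0$ is supported in $[a,b]$ with $a > 0$, the product $m_{BR}(2^n t) \dyad_0(t)$ vanishes identically for $n > \log_2(1/a)$, so only finitely many terms contribute to the summability, each dominated by a constant multiple of $\|m_{BR}\|_{W^c_2(\R)}$; choosing $c$ just above $\alpha + \max(\ldots) + 1$ then translates the finiteness of this norm into the stated lower bound $\gamma > \alpha + \max(\ldots) + \frac{1}{2}$.

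The main technical point, minor but not entirely mechanical, is the dyadic derivative bookkeeping for the wave multiplier: every differentiation of $\exp(i 2^n t)$ produces a factor $2^n$, and the full factor $2^{nc}$ arising from $c$-fold differentiation must be absorbed precisely by the algebraic decay $(1+2^n t)^{-\delta} \asymp 2^{-n\delta}$ of the resolvent factor on $\supp \dyad_0$. Once this balance is established, both maximal estimates follow directly from Proposition \ref{prop-main-exp}.
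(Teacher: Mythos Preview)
Your argument is correct and follows the same approach as the paper: apply Proposition~\ref{prop-main-exp} to the two explicit multipliers, verify dyadic summability for the wave multiplier via $\|m_w(2^n\cdot)\dyad_0\|_{W^c_2}\lesssim 2^{n(c-\delta)}$, and extract the threshold $\gamma>c-\tfrac12$ for Bochner--Riesz from the local Sobolev regularity of $(1-\lambda)_+^\gamma$. One small omission: Proposition~\ref{prop-main-exp} also requires $\|m'\cutoff_0\|_{\Hor^{c-1}_2}<\infty$, which is automatic for the smooth wave multiplier but for Bochner--Riesz amounts to $(1-\lambda)_+^{\gamma-1}\in\Hor^{c-1}_2$; the paper handles this by rescaling to $(1-4\lambda)_+^\gamma$ (so that the dyadic sum over $n\geq 0$ vanishes identically and the entire content sits in this H\"ormander condition on $m'$, checked via \cite{COSY}), and it holds under the very same condition $\gamma>c-\tfrac12$ that you already derived.
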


Our approach of the proof of Theorem \ref{equ-intro-main-theorem}, using $R$-boundedness and square functions, has the advantage of flexibility to also include square function estimates in the conclusions.
We thus obtain

\begin{thm}[see Corollaries \ref{cor-main-full-support} and \ref{cor-main-C0}]
\label{thm-intro-main-C0}
Let $Y$ be a UMD lattice, $1 < p < \infty$, $\beta > \frac12$ and $(\Omega,\mu)$ a $\sigma$-finite measure space.
Let $A$ be a $0$-sectorial operator on $L^p(Y)$.
Assume that $A$ has a $\Hor^\alpha_2$ calculus on $L^p(Y)$ for some $\alpha  > \frac12$.
Choose some
\[ c > \alpha + \max \left( \frac12, \frac{1}{\type L^p(Y)} - \frac{1}{\cotype L^p(Y)} \right) + \frac12 + \beta . \]
Let $(m_k)_{k \in \N}$ be a sequence of spectral multipliers with $m_k(0) = 0$ such that $\sum_{n \in \Z} \sup_k \|m_k(2^n \cdot)\dyad_0\|_{W^c_2(\R)} < \infty$.
Then
\begin{align*}
\left\| \left( \sum_k \sup_{t > 0} |m_k(tA)f_k|^2 \right)^{\frac12} \right\|_{L^p(Y)} &  \lesssim \left\| \left( \sum_k \|t \mapsto m_k(tA)f_k\|_{\Lambda^\beta_{2,2}(\R_+)}^2 \right)^{\frac12} \right\|_{L^p(Y)} \\
& \lesssim \sum_{n \in \Z} \sup_k \|m_k(2^n \cdot)\dyad_0\|_{W^c_2(\R)} \left\| \left( \sum_k |f_k|^2 \right)^{\frac12} \right\|_{L^p(Y)} . \end{align*}
\end{thm}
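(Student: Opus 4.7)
My plan is to derive the substantive (second) inequality first, and then deduce the first one a posteriori from Sobolev embedding. The first inequality rests on the continuous injection $\Lambda^\beta_{2,2}(\R_+) \hookrightarrow C_0(\R_+)$ (valid since $\beta > \frac12$): for a.e.\ $(x,\omega) \in \Omega \times \Omega'$ such that $t \mapsto m_k(tA)f_k(x,\omega)$ belongs to $\Lambda^\beta_{2,2}(\R_+)$, one has $\sup_{t>0} |m_k(tA)f_k(x,\omega)| \lesssim \|t \mapsto m_k(tA)f_k(x,\omega)\|_{\Lambda^\beta_{2,2}(\R_+)}$; squaring, summing in $k$, and taking the $L^p(Y)$ norm yields the first bound. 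The required a.e.\ membership in $\Lambda^\beta_{2,2}(\R_+)$ will be a byproduct of the second inequality once it is established.

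For the main inequality the plan is to replay the proof of the scalar supremum Theorem~\ref{equ-intro-main-theorem}, but systematically promote every norm estimate on $L^p(Y)$ to an $R$-bounded / square function estimate, exploiting that $L^p(Y)$ is itself a UMD lattice, where $R$-boundedness of a family of operators is equivalent to the corresponding $\ell^2$-valued square function bound. Concretely, fix a dyadic partition of unity $\dyad_n(\cdot) = \dyad_0(\cdot/2^n)$ on $(0,\infty)$ and decompose $m_k = \sum_{n \in \Z} m_k \dyad_n$. Each Littlewood--Paley piece $(m_k\dyad_n)(tA)$ admits, via Lemma~\ref{lem-representation-formula-wave-operators}, an integral representation as an $L^2_s$ scalar kernel paired with the rescaled wave--resolvent family $(1+2^nA)^{-\gamma}\exp(i2^n s A)$, the $L^2_s$ norm of that kernel being controlled by $\|m_k(2^n\cdot)\dyad_0\|_{W^c_2(\R)}$, uniformly for $t$ on the dyadic scale of $2^n$. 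Integrating the $\Lambda^\beta_{2,2}$-norm in $t$ costs only $\beta$ additional derivatives on the scalar kernel, which can be absorbed into the threshold for $c$.

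The $\Hor^\alpha_2$ calculus on $L^p(Y)$, combined with the $\frac{1}{\type L^p(Y)} - \frac{1}{\cotype L^p(Y)}$ correction that dictates the lower bound on $c$, supplies the $R$-boundedness of $\{(1+2^nA)^{-\gamma}\exp(i2^n s A) : n\in\Z, s\in\R\}$ with a tame weight in $s$. Converting this $R$-bound into a square function estimate on $L^p(Y(\ell^2_k))$ lets me pull the $\ell^2_k$-sum inside the integral in $s$ and thereby replace $\|m_k(2^n\cdot)\dyad_0\|_{W^c_2}$ by $\sup_k \|m_k(2^n\cdot)\dyad_0\|_{W^c_2}$ at no cost. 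Reassembling the dyadic blocks by the triangle inequality in $n$ produces the prefactor $\sum_n \sup_k \|m_k(2^n\cdot)\dyad_0\|_{W^c_2(\R)}$ multiplying $\|(\sum_k|f_k|^2)^{1/2}\|_{L^p(Y)}$ on the right, as required.

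The principal obstacle I anticipate is the simultaneous handling of the three vector structures in play: the lattice $Y$, the $\ell^2_k$-square function, and the $\Lambda^\beta_{2,2}(\R_+)$-valued dependence on $t$. The argument must show that $R$-boundedness of the wave--resolvent family on $L^p(Y)$ upgrades to a bound in $L^p(Y(\ell^2(\Lambda^\beta_{2,2}(\R_+))))$; this should go through because both $\ell^2$ and $\Lambda^\beta_{2,2}(\R_+)$ are Hilbertian and UMD lattice structure is preserved under such Hilbertian tensoring, but the scale-dependent weights carrying the $n$-dependence of the integration variable $s$ and of the resolvent power $\gamma$ must be tracked carefully so that the $R$-bound survives the $\ell^2_k$-pullthrough and the final summation in $n$ is absolutely convergent against the assumed summability hypothesis.
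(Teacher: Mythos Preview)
Your proposal is correct and follows the paper's route: decompose each $m_k$ dyadically to reduce to compactly supported pieces (Corollary~\ref{cor-main-full-support}), handle those via the wave-operator representation of Lemma~\ref{lem-representation-formula-wave-operators} together with the $R$-bounded $\Hor^\gamma_2$ calculus from Proposition~\ref{prop-Hormander-calculus-to-R-Hormander-calculus} (Theorem~\ref{thm-main}), and recover the supremum from the embedding $\Lambda^\beta_{2,2}(\R_+) \hookrightarrow C_0(\R_+)$ (Corollary~\ref{cor-main-C0}). One clarification: the scale $2^n$ in the wave--resolvent family $(1+2^nA)^{-\gamma}\exp(i2^nsA)$ does not come from the multiplier decomposition $m_k = \sum_n m_k\dyad_n$ you describe, but from a \emph{second}, operator-side Paley--Littlewood expansion $f_k = \sum_n \dyad_n(A)f_k$ carried out inside the compactly supported case; it is this second expansion that localises $t$ to a compact interval (allowing the cutoff $\psi(t)$), and the resulting double index $(k,n)$ is exactly where property~$(\alpha)$ --- equivalently your Hilbertian tensoring --- enters.
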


Again, there is a version of Theorem \ref{thm-intro-main-C0} including the possibility $m_k(0) \neq 0$, see Proposition \ref{prop-main-exp}, and as in \cite{Choi}, there is also a certain flexibility of possible rearrangements of the pieces $\|m_k(2^n\cdot)\dyad_0\|_{W^c_2(\R)}$ in $n \in \Z$.

Our ambient space $\Lambda^\beta_{2,2}(\R_+)$ does not only inject into $C_0(\R_+)$, but also into the space $V^q(\R_+)$ of functions with bounded $q$-variation for $2 \leq q < \infty$.
In a forthcoming paper \cite{DKqVar}, we will pursue this observation.
We will prove that the spherical mean $A_tf(x) = \int_{S^{d-1}} f(x - t y) d\sigma(y)$ on $\R^d$ (which is a multiplier of the euclidean Laplacian) has bounded $Y$-valued $q$-variation for $f \in L^p(\R^d,Y)$ if $d$ is sufficiently large depending on the geometry of the UMD lattice $Y$.
Furthermore, we will deduce from H\"ormander calculus of an abstract operator $A$ boundedness of the maximal operator $t \mapsto \max_{t \in [t_0,t_1]} |\exp(itA)f(x,\omega)|$ for $0 < t_0 < t_1 < \infty$ on $L^p(Y)$, which then implies pointwise in $(x,\omega) \in \Omega \times \Omega'$ boundedness of the solution of a Schr\"odinger (typically $A = - \Delta$) and wave (typically $A = \sqrt{-\Delta}$) equation, provided the initial data $f$ is sufficiently smooth (lies in a fractional domain space of $A$).
This will provide an abstract version of Carleson's problem concerning the Schr\"odinger equation on $\R^d$ \cite[Theorem p.24]{Car}.

We end this introduction with an overview of the article.
In Section \ref{sec-preliminaries} we introduce the necessary background on Banach space geometry such as $R$-boundedness, UMD lattices, as well as type and cotype.
Moreover, we introduce the notions of H\"ormander functional calculus and the class $\Lambda^\beta_{2,2}(\R_+)$ that appears in the results above.

Then in Section \ref{sec-main}, we state and prove Theorem \ref{equ-intro-main-theorem}, Corollary \ref{cor-intro-wave-Bochner-Riesz-maximal} and Theorem \ref{thm-intro-main-C0} above together with several variants.
These include a version of Theorem \ref{equ-intro-main-theorem} for spectral multipliers not necessarily vanishing or decaying at $0$ (see Proposition \ref{prop-main-exp}) and pointwise convergence of the spectral multiplier $m(tA)f(x,\omega)$ as $t \to 0$ or $t \to \infty$ (see Corollary \ref{cor-pointwise-convergence}).
This convergence holds pointwise for a.e. $(x,\omega) \in \Omega \times \Omega'$ in $\C$, as well as pointwise for a.e. $x \in \Omega$ in $Y$ and in $L^p(Y)$.

Finally, in Section \ref{sec-examples}, we illustrate all our results by numerous examples, stemming from \cite{DKK}, of ambient spaces $(\Omega,\mu)$ together with differential operators $A$ that possess a H\"ormander calculus on $L^p(\Omega,Y)$ and hence the preceding sections apply to them (Subsection \ref{subsec-examples-GE}).
Moreover, in Subsection \ref{subsec-examples-UMD-lattices}, we illustrate applications of our results to lattices beyond $L^q(\Omega')$.
We consider UMD lattices $Y(\Omega')$ different from $L^q(\Omega')$ spaces together with the so-called Stokes operator which has an $\HI$ calculus on a subspace $Z$ of $Y(\Omega')$ but not on $L^q(\Omega')$ for $1 < q \neq 2 < \infty$.
For those domains $\Omega' \subseteq \R^d$ (conditions on the smoothness of the boundary of $\Omega'$) such that this operator $A$ has a H\"ormander calculus on $Z$, our Theorem \ref{equ-intro-main-theorem} yields then a maximal estimate
\[ \norm{t \mapsto \sup_{t > 0} |m(tA)f| \, }_{Y} \lesssim \norm{m}_{W^c_2[\frac12,2]} \norm{f}_Z \]
on non-$L^q$-spaces, for the appropriate choice of $c$.
Finally we discuss the example of Amann's coagulation-fragmentation equation which naturally involves a differential operator $A = \sum_{|\alpha| \leq m} a_\alpha(x) D^\alpha$ acting on $L^p(\R^n,Y)$, which is not just of the form $A = A_0 \ot \Id_Y$ for some $0$-sectorial operator $A_0$ acting on $L^p(\R^n)$, but has nontrivial components $a_\alpha(x) \in B(Y)$ acting on $Y$.
Amann gave in \cite{Ama} sufficient conditions under which this operator is sectorial on $L^p(\R^n,Y)$ after some shift.
Also in the particular case when the above operator $A$ models a reaction-diffusion equation, Amann's more natural choice of function space on which $A$ is sectorial, is a lattice different from pure $L^q$ (see \eqref{equ-Amanns-lattice}).
We remark however that in Amann's setting it is not known today under which conditions $A$ has a H\"ormander calculus on $L^p(\R^n,Y)$.
Finally, in Subsection \ref{subsec-type-cotype-necessary}, we discuss the necessity of type and cotype of $L^p(Y)$ in H\"ormander calculus of the euclidean Laplacian on Bochner spaces $L^2(\R^d,Y)$.

\section{Preliminaries}
\label{sec-preliminaries}

In this section, we recall the notions on Banach space geometry and functional calculus that we need in this paper.
For the H\"ormander functional calculus, we only need few facts that we will use as an abstract blackbox in the remainder of the article.

\subsection{$R$-boundedness}
\label{subsec-R-boundedness}

\begin{defi}
Let $X,Y$ be Banach spaces.
We recall that a family $\tau \subseteq B(X,Y)$ is called $R$-bounded, if for a sequence $(\epsi_k)_k$ of independent Rademacher random variables, taking the value $1$ and $-1$ with equal probability $\frac12$, a constant $C > 0$, any $n \in \N$, any $x_1,\ldots,x_n \in X$ and any $T_1, \ldots , T_n \in \tau$, we have
\[ \E \left\| \sum_{k = 1}^n \epsi_k T_k x_k \right\|_Y \leq C \E \left\| \sum_{k = 1}^n \epsi_k x_k \right\|_X .\]
In this case, the infimum over all admissible $C$ is denoted by the $R$-bound $R(\tau)$.
\end{defi}

\begin{remark}
\label{rem-R-bdd-Hilbert}
Clearly, $R(\{T\}) = \|T\|_{B(X,Y)}$ if $\tau = \{ T \}$ is a singleton.
In general, we have $R(\tau) \geq \sup_{T \in \tau} \|T\|_{B(X,Y)}$ above.
If $X$ and $Y$ are (isomorphic to) Hilbert spaces, then a family $\tau \subset B(X,Y)$ is $R$-bounded if and only if $\tau$ is bounded.
\end{remark}

\begin{defi}
\label{def-property-alpha-type-cotype}
Let $X$ be a Banach space and $(\epsi_n)_n$ be a sequence of independent Rademacher variables.
\begin{enumerate}
\item 
We say that $X$ has Pisier's property $(\alpha)$ if there are constants $c_1,c_2 > 0$ such that for any array $(x_{n,k})_{n,k = 1}^N$ in $X$,$(\epsi'_k)_k$ a second sequence of independent Rademacher variables independent of $(\epsi_n)_n$, and $(\epsi''_{n,k})_{n,k}$ a doubly indexed sequence of independent Rademacher variables, the following equivalence holds:
\[ c_1 \E \E' \left\| \sum_{k,n = 1}^N \epsi_n \epsi'_k x_{n,k} \right\|_{X} \leq \E'' \left\| \sum_{k,n = 1}^N \epsi''_{n,k} x_{n,k} \right\|_X \leq c_2 \E \E' \left\| \sum_{k,n = 1}^N \epsi_n \epsi'_k x_{n,k} \right\|_{X} . \] 
\item Let $p \in [1,2]$ and $q \in [2,\infty]$.
We say that $X$ has type $p$ if for some constant $c > 0$ and any sequence $(x_n)_{n = 1}^N$ in $X$, we have
\[ \E \left\| \sum_{n = 1}^N \epsi_n x_n \right\|_X \leq c \left( \sum_{n = 1}^N \|x_n\|^p \right)^{\frac1p} . \]
In this case, we write $\type(X) = p$ (not uniquely determined value).
We say that $X$ has cotype $q$ if for some constant $c > 0$ and any sequence $(x_n)_{n = 1}^N$ in $X$, we have
\[\left( \sum_{n = 1}^N \|x_n\|^q \right)^{\frac1q} \leq c  \E \left\| \sum_{n = 1}^N \epsi_n x_n \right\|_X  . \]
In this case, we write $\cotype(X) = q$ (not uniquely determined value).
\end{enumerate}
\end{defi}

\subsection{UMD lattices}
\label{subsec-UMD-lattices}

In this article, UMD lattices, i.e. Banach lattices which enjoy the UMD property, play a prevalent role.
For a general treatment of Banach lattices and their geometric properties, we refer the reader to \cite[Chapter 1]{LTz}.
We recall now definitions and some useful properties.
A Banach space $Y$ is called UMD space if the Hilbert transform 
\[ H : L^p(\R) \to L^p(\R),\: Hf(x) = \lim_{\epsilon \to 0} \int_{|x-y| \geq \epsilon} \frac{1}{x-y} f(y) \,dy \] 
extends to a bounded operator on $L^p(\R,Y),$ for some (equivalently for all) $1 < p < \infty$ \cite[Theorem 5.1]{HvNVW}.
The importance of the UMD property in harmonic analysis was recognized for the first time by Burkholder \cite{Burk1981,Burk1983}, see also his survey \cite{Burk2001}.
He settled a geometric characterization via a convex functional \cite{Burk1981} and together with Bourgain \cite{Bourgain1983}, they showed that the UMD property can be expressed by boundedness of $Y$-valued martingale sequences.
A UMD space is super-reflexive \cite{Al79}, and hence (almost by definition) B-convex.
As a survey for UMD lattices and their properties in connection with results in harmonic analysis, we refer the reader to \cite{RdF}.

A K\"othe function space $Y$ is a Banach lattice consisting of equivalence classes of locally integrable functions on some $\sigma$-finite measure space $(\Omega',\mu')$ with the additional properties
\begin{enumerate}
\item If $f :\: \Omega' \to \C$ is measurable and $g \in Y$ is such that $|f(\omega')| \leq |g(\omega')|$ for almost every $\omega' \in \Omega'$, then $f \in Y$ and $\|f\|_Y \leq \|g\|_Y$.
\item The indicator function $1_A$ is in $Y$ whenever $\mu'(A) < \infty$.
\item Moreover, we will assume that $Y$ has the $\sigma$-Fatou property:
If a sequence $(f_k)_k$ of non-negative functions in $Y$ satisfies $f_k(\omega') \nearrow f(\omega')$ for almost every $\omega' \in \Omega'$ and $\sup_k \|f_k\|_Y < \infty$, then $f \in Y$ and $\|f\|_Y = \lim_k \|f_k\|_Y$.
\end{enumerate}
Note that for example, any $L^p(\Omega')$ space with $1 \leq p \leq \infty$ is such a K\"othe function space.

\begin{lemma}
\label{lem-UMD-lattice-Fatou}
Let $Y$ be a UMD lattice.
Then it has the $\sigma$-Levi property: any increasing and norm-bounded sequence $(x_n)_n$ in $Y$ has a supremum in $Y$.
It also has the Fatou-property and hence the $\sigma$-Fatou property.
Note that if $1 < p < \infty$ and $(\Omega,\mu)$ is a $\sigma$-finite measure space, then $L^p(\Omega,Y)$ is again a UMD lattice, so has the above $\sigma$-Levi and $\sigma$-Fatou properties.
\end{lemma}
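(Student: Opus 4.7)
My plan is to derive all three assertions from the fact (cited just before the statement from \cite{Al79}) that a UMD space is super-reflexive, hence reflexive, combined with classical Banach lattice theory.

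First I would prove the $\sigma$-Levi property. Let $(x_n)$ be an increasing sequence in $Y$ with $\sup_n \|x_n\| < \infty$. Since $Y$ is reflexive, $(x_n)$ admits a weakly convergent subsequence $x_{n_k} \rightharpoonup x$. The positive cone of $Y$ is norm-closed and convex, hence weakly closed, so the inequalities $x_{n_k} \geq x_m$ (valid as soon as $n_k \geq m$) pass to the weak limit and give $x \geq x_m$ for every $m$. If $y \in Y$ is any other upper bound for $(x_n)$, applying the same argument to $y - x_{n_k} \geq 0$ yields $y \geq x$, so $x = \sup_n x_n$ in $Y$.

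Next I would handle the Fatou property. The key input is that a reflexive Banach lattice is a KB-space, and in particular its norm is order continuous: every net $y_\alpha \downarrow 0$ satisfies $\|y_\alpha\| \to 0$. Given $0 \leq x_\alpha \nearrow x$ in $Y$, applying this to the net $x - x_\alpha \downarrow 0$ gives $\|x - x_\alpha\| \to 0$, hence $\|x_\alpha\| \to \|x\|$, which is the Fatou property. The $\sigma$-Fatou property in the sense of the K\"othe function space definition above then follows by combining the $\sigma$-Levi property (to produce the supremum $f = \sup_k f_k$ in $Y$, which must coincide with the pointwise a.e.\ limit, since the lattice supremum of an increasing sequence of measurable functions in a K\"othe space is the pointwise supremum) with the sequential Fatou norm identity.

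For the last assertion, the lattice structure on $L^p(\Omega, Y)$ is inherited pointwise, and $\sigma$-finiteness of $(\Omega,\mu)$ realises $L^p(\Omega, Y)$ as a K\"othe-type space over $\Omega \times \Omega'$. The UMD property is preserved under iterated Bochner integration: via the isometric identifications $L^p(\R, L^p(\Omega, Y)) \cong L^p(\R \times \Omega, Y) \cong L^p(\Omega, L^p(\R, Y))$ and a Fubini argument, boundedness of the Hilbert transform on $L^p(\R, Y)$ upgrades to boundedness on $L^p(\R, L^p(\Omega, Y))$. Having established that $L^p(\Omega, Y)$ is itself a UMD lattice, its $\sigma$-Levi and $\sigma$-Fatou properties follow by applying the first two paragraphs to it. The main obstacle I anticipate is purely bookkeeping: one has to keep the variants of the Fatou condition ($\sigma$-Fatou for K\"othe spaces, Fatou for abstract lattices) straight and verify that the supremum produced by the weak-limit argument agrees with the pointwise a.e.\ limit in the K\"othe setting; once reflexivity of $Y$ is in hand, every step is short.
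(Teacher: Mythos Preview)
Your proof is correct and follows essentially the same route as the paper: both start from the observation that a UMD lattice is (super-)reflexive, and deduce the $\sigma$-Levi and Fatou properties from reflexivity. The paper simply cites \cite[Proposition B.1.8]{Lin} for that deduction, whereas you supply the standard details (weak compactness of bounded sequences gives the supremum; order continuity of the norm in a reflexive lattice gives the Fatou norm identity); your treatment of $L^p(\Omega,Y)$ via Fubini is also the expected one.
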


\begin{proof}
Note that a UMD lattice is reflexive.
Then we refer to \cite[Proposition B.1.8]{Lin}.
\end{proof}

\begin{ass}
In the rest of the paper, $Y = Y(\Omega')$ will always be a UMD space which is also a K\"othe function space, unless otherwise stated.
\end{ass}

\begin{defi}
\label{def-Lambda}
We define
\[ \Lambda^\beta := \Lambda^\beta_{2,2} := \Lambda^\beta_{2,2}(\R_+) := \{ f : \R_+ \to \C : \: f \circ \exp \text{ belongs to }W^\beta_2(\R) \} , \]
where $W^\beta_2(\R)$ denotes the usual Sobolev space defined e.g. via the Fourier transform.
We equip the space with the obvious norm $\|f\|_{\Lambda^\beta_{2,2}} := \|f \circ \exp\|_{W^\beta_2(\R)}$.
The space $\Lambda^\beta_{2,2}(\R_+)$ is a Hilbert space and imbeds into the (non-UMD) lattice $C_0(\R_+)$ for $\beta > \frac12$.
Indeed, this follows from the Sobolev embedding $W^\beta_2(\R) \hookrightarrow C_0(\R)$ for $\beta > \frac12$.
\end{defi}

Let $E$ be any Banach space.
We can consider the vector valued lattice $Y(E) = \{ F : \Omega' \to E :\: F \text{ is strongly measurable and }\omega' \mapsto \|F(\omega')\|_E \in Y\}$ with norm $\|F\|_{Y(E)} = \bigl\| \|F(\cdot)\|_E \bigr\|$.
From \cite[Corollary p.~214]{RdF}, we know that if $Y$ is UMD and $E$ is UMD, then also $Y(E)$ is UMD.
Moreover, we shall consider specifically in this article spaces $L^p(\Omega,Y(E))$, with e.g. $E = \Lambda^\beta$ as above.
For the natural identity $L^p(\Omega,Y)(E) = L^p(\Omega,Y(E))$ guaranteed e.g. by reflexivity of $Y$, we refer to \cite[Sections B.2.1, B.2.2, Theorem B.2.7]{Lin}.

\begin{remark}
\label{rem-Lambda-dilation-invariant}
The $\Lambda^\beta$ norm is dilation and inversion invariant, that is, for any $f \in \Lambda^\beta$ and $t > 0$, $\|f(t\cdot)\|_{\Lambda^\beta}  = \|f\|_{\Lambda^\beta}$ and $\left\| f\left(  \frac{1}{(\cdot)} \right) \right\|_{\Lambda^\beta} = \|f\|_{\Lambda^\beta}$.
Suppose $f : \R_+ \to \C$ is measurable and has compact support in $\R_+$.
Then $f$ belongs to $\Lambda^\beta$ iff $f$ belongs to $W^\beta_2(\R)$ and in this case, we have $\|f\|_{\Lambda^\beta} \cong \|f\|_{W^\beta_2(\R)}$, where the equivalence constants depend on the compact support.
\end{remark}

\begin{proof}
The dilation and inversion invariance of the $\Lambda^\beta$ norm easily follows from translation and negation invariance of the $W^\beta_2$ norm.
For the last part, we refer to \cite[p.~63 (4.13)]{KrPhD}, where it is proved that if $\psi \in C^\infty_c(\R_+)$ and if $(\dyad_n)_{n \in \Z}$ is a dyadic partition of $\R_+$ 
(see Definition \ref{def-dyad} below), then $\sup_{t > 0} \| \dyad_0 f(te^{(\cdot)}) \|_{W^\beta_2(\R)} \cong \sup_{t > 0} \|\psi f(t \cdot)\|_{W^\beta_2(\R)}$.
If $f$ has moreover compact support, then we claim that 
\[ \sup_{t > 0} \| \dyad_0 f(te^{(\cdot)}) \|_{W^\beta_2(\R)} \cong \|f(e^{(\cdot)})\|_{W^\beta_2(\R)} = \|f\|_{\Lambda^\beta} \] and that 
\[ \sup_{t > 0} \|\psi f(t \cdot)\|_{W^\beta_2(\R)} \cong \|f\|_{W^\beta_2(\R)} \]
(with equivalence constants depending on the support of $f$), which then concludes the proof.
On the one hand, the function $f(e^{(\cdot)})$ has compact support in $\R$, so that there exists some $\xi \in C^\infty_c(\R)$ and $t_1,\ldots,t_N \in \R$ such that $\xi \sum_{k = 1}^N \dyad_0(\cdot + t_k) = 1$ on $\supp(f)$.
Then we have
\begin{align*}
\|f(e^{(\cdot)})\|_{W^\beta_2} & = \left\| \xi \sum_{k = 1}^N \dyad_0(\cdot + t_k)  f(e^{(\cdot)})\right\|_{W^\beta_2} \\
& \leq \|\xi\|_{M(W^\beta_2)} \sum_{k = 1}^N \| \dyad_0(\cdot + t_k) f(e^{(\cdot)}) \|_{W^\beta_2} \\
& \lesssim \sum_{k = 1}^N \|\dyad_0 f(e^{-t_k} e^{(\cdot)})\|_{W^\beta_2} \\
& \lesssim \sup_{t > 0} \|\dyad_0 f(te^{(\cdot)}) \|_{W^\beta_2},
\end{align*}
where $M(W^\beta_2)$ stands for the space of bounded pointwise multipliers of $W^\beta_2$, which clearly contains $\xi \in C^\infty_c(\R)$.
In the converse direction, we have for given $t > 0$
\begin{align*}
\| \dyad_0 f(te^{(\cdot)}) \|_{W^\beta_2} & = \|\dyad_0 f(e^{(\cdot + \ln (t))}) \|_{W^\beta_2} \\
& = \| \dyad_0(\cdot - \ln(t)) f(e^{(\cdot)}) \|_{W^\beta_2} \\
& \leq \| \dyad_0(\cdot - \ln(t)) \|_{M(W^\beta_2)} \| f(e^{(\cdot)}) \|_{W^\beta_2} \\
& \lesssim \| f (e^{(\cdot)} ) \|_{W^\beta_2},
\end{align*}
where we have used that the norm of $W^\beta_2(\R)$ and thus of $M(W^\beta_2)$ is translation invariant.
We have shown $\sup_{t > 0} \| \dyad_0 f(te^{(\cdot)}) \|_{W^\beta_2(\R)} \cong \|f(e^{(\cdot)})\|_{W^\beta_2(\R)}$.

On the other hand, by the compact support of $f$, there exists $N \in \N$ such that
\begin{align*}
\|f\|_{W^\beta_2} & = \left\| \sum_{k = -N}^N \dyad_k f \right\|_{W^\beta_2} \leq \sum_{k = -N}^N \| \dyad_k f \|_{W^\beta_2} \\
& \cong_N \sum_{k = -N}^N \|\dyad_0 f(2^k \cdot)\|_{W^\beta_2} \\
& \leq (2N+1) \sup_{t > 0} \|\dyad_0 f(t \cdot)\|_{W^\beta_2}.
\end{align*}
Finally, for the inequality in the converse direction, we note that there is a compact $K \subset \R_+$ such that $\dyad_0 f(t\cdot) = 0$ if $t \not\in K$.
Thus,
\[
\|\dyad_0 f(t\cdot)\|_{W^\beta_2} \leq \| \dyad_0\|_{M(W^\beta_2)} \|f(t\cdot)\|_{W^\beta_2} \lesssim_K \|f\|_{W^\beta_2} .
\]
We have shown $\sup_{t > 0} \|\psi f(t \cdot)\|_{W^\beta_2(\R)} \cong \|f\|_{W^\beta_2(\R)}$ with the choice $\psi = \dyad_0$, and the proof is finished.
\end{proof}

\begin{lemma}
\label{lem-UMD-lattice-Rademacher}
Let $Y = Y(\Omega')$ be a UMD lattice and  $(\epsilon_k)_k$ an i.i.d. Rademacher sequence.
Then we have the norm equivalence
\begin{equation}
\label{equ-Rademacher-square}
\E \biggl\|\sum_{ k = 1}^n \epsilon_k y_k \biggr\|_Y \cong \biggl\|\Bigl( \sum_{k = 1}^n |y_k|^2 \Bigr)^{\frac12}\biggr\|_Y
\end{equation}
uniformly in $n \in \N$.
In particular, this also applies to $L^p(\Omega,Y),\: 1 < p < \infty$.
\end{lemma}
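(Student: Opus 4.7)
The plan is to identify the equivalence as the Khintchine--Maurey inequality for Banach lattices with finite cotype, and to verify that its hypothesis is met. Recall from Subsection 2.2 that any UMD space is super-reflexive; for a \emph{lattice} this forces finite cotype $q < \infty$, since a super-reflexive Banach lattice is both $p$-convex and $q$-concave for some $1 < p \leq q < \infty$, and $q$-concavity classically implies cotype $q$ (see e.g.\ the survey \cite{RdF} cited above, or Lindenstrauss--Tzafriri Vol.~II).

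Given that $Y$ is a Banach lattice with finite cotype, Maurey's vector-valued extension of Khintchine's inequality (again Lindenstrauss--Tzafriri Vol.~II) applies and yields
\[\E \Bigl\|\sum_{k=1}^n \epsilon_k y_k\Bigr\|_Y \cong \Bigl\|\Bigl(\sum_{k=1}^n |y_k|^2\Bigr)^{\frac12}\Bigr\|_Y\]
uniformly in $n$, where the squares and square root are interpreted pointwise in the lattice sense on $\Omega'$. This is exactly the asserted equivalence.

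For the ``in particular'' clause, the key observation is that $L^p(\Omega, Y)$ is again a UMD lattice for $1 < p < \infty$ whenever $Y$ is, with the pointwise-in-$\Omega$ lattice structure inherited from $Y$. This is stated in Lemma 2.3 and combines the general fact that a Bochner space $L^p(\Omega, E)$ with $E$ a UMD lattice is once more a UMD lattice, with the preservation of finite cotype under such Bochner constructions. The argument of the first two paragraphs therefore applies verbatim with $Y$ replaced by $L^p(\Omega, Y)$.

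I do not anticipate any substantive obstacle: the entire content lives in Maurey's theorem, and the only nonautomatic step is the passage UMD lattice $\Rightarrow$ finite cotype, which is standard. An alternative self-contained path would be to combine a scalar Khintchine inequality with lattice Fubini after representing $Y$ via its $q$-concavity constants, but invoking Maurey directly is considerably shorter and cleaner.
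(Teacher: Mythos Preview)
Your proof is correct and follows essentially the same route as the paper: verify that the UMD lattice hypothesis forces the assumption needed for Maurey's Khintchine inequality, then invoke that result. The only cosmetic difference is that the paper passes through B-convexity (UMD $\Rightarrow$ B-convex, then cite \cite{Ma74}), whereas you pass through super-reflexivity and finite cotype before citing the Lindenstrauss--Tzafriri formulation; these are equivalent ways of reaching the same theorem.
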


\begin{proof}
As $Y$ is a UMD lattice, it is B-convex.
The result thus follows from \cite{Ma74}.
For the last sentence, we only need to recall that $L^p(\Omega,Y)$ will also be a B-convex Banach lattice.
\end{proof}

In the following, we will make use tacitly of the following Lemma \ref{lem-UMD-lattice-Hilbert-extension}.

\begin{lemma}
\label{lem-UMD-lattice-Hilbert-extension}
\begin{enumerate}
\item 
Let $T : Y \to Z$ be a bounded (linear) operator, where $Y(\Omega')$ and $Z(\Omega'')$ are B-convex Banach lattices.
Then its tensor extension $T \otimes \Id_{\ell^2},$ initially defined on $Y(\Omega') \otimes \ell^2 \subset Y(\Omega',\ell^2)$ is again bounded $Y(\Omega',\ell^2) \to Z(\Omega'',\ell^2).$
In particular, if $Y(\Omega')$ is a UMD lattice, then $Y(\Omega',\ell^2)$ is also a UMD lattice.
\item Let $Y(\Omega')$ be a B-convex Banach lattice and $H$ a Hilbert space.
Then $Y(H)$ has type $p \in (1,2]$ if and only if $Y$ has type $p$.
Also $Y(H)$ has cotype $q \in [2,\infty)$ if and only if $Y$ has cotype $q$.
\end{enumerate}
\end{lemma}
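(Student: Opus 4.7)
For part (1), I would transfer the boundedness of $T : Y \to Z$ to the $\ell^2$-valued extension through Maurey's equivalence of Rademacher averages with square functions, which holds in any B-convex lattice (Lemma \ref{lem-UMD-lattice-Rademacher} and its reference). For $y_1, \ldots, y_n \in Y$ this yields
\[ \Bnorm{\bigl(\textstyle\sum_{k=1}^n |Ty_k|^2\bigr)^{\frac12}}_Z \cong \E\Bnorm{\textstyle\sum_{k=1}^n \epsi_k Ty_k}_Z \leq \|T\| \, \E\Bnorm{\textstyle\sum_{k=1}^n \epsi_k y_k}_Y \cong \|T\| \, \Bnorm{\bigl(\textstyle\sum_{k=1}^n |y_k|^2\bigr)^{\frac12}}_Y, \]
so that $T \ot \Id_{\ell^2_n} : Y(\ell^2_n) \to Z(\ell^2_n)$ is bounded uniformly in $n$. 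I would then pass from $\ell^2_n$ to $\ell^2$ by applying this bound to truncations of an arbitrary $(y_j) \in Y(\ell^2)$ and using the $\sigma$-Fatou property from Lemma \ref{lem-UMD-lattice-Fatou}. The ``in particular'' statement follows by specialising to $T = H$, the Hilbert transform on $L^p(\R,Y)$ (bounded since $Y$ is UMD); both $Y$ and $L^p(\R,Y)$ are UMD and hence B-convex lattices, and the identity $L^p(\R,Y)(\ell^2) = L^p(\R, Y(\ell^2))$ recalled just before the lemma then gives boundedness of the Hilbert transform on $L^p(\R, Y(\ell^2))$, i.e.\ $Y(\ell^2)$ is UMD.

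For part (2), the forward directions are immediate: fixing any unit vector $e \in H$, the map $y \mapsto y \ot e$ is an isometric embedding $Y \hookrightarrow Y(H)$, and both type $p$ and cotype $q$ pass to closed subspaces. For the reverse directions, after choosing an orthonormal basis $(e_j)$ of $H$ (separable without loss of generality, since each $f \in Y(H)$ is essentially separably valued) and decomposing $f = \sum_j f_j \ot e_j$ with $f_j \in Y$, Maurey's identity applied to the B-convex lattice $Y$ together with the orthonormality of $(e_j)$ yields
\[ \|f\|_{Y(H)} = \Bnorm{\bigl(\textstyle\sum_j |f_j|^2\bigr)^{\frac12}}_Y \cong \E' \Bnorm{\textstyle\sum_j \epsi_j' f_j}_Y . \]
Applied to a finite family $f^{(1)}, \ldots, f^{(n)} \in Y(H)$, this rewrites the Rademacher sum in $Y(H)$ as an iterated Rademacher average over an independent auxiliary sequence $(\epsi_j')$:
\[ \E\Bnorm{\textstyle\sum_k \epsi_k f^{(k)}}_{Y(H)} \cong \E\E' \Bnorm{\textstyle\sum_k \epsi_k g_k(\epsi')}_Y, \quad g_k(\epsi') := \textstyle\sum_j \epsi_j' f^{(k)}_j \in Y . \]
I would then apply type $p$ (respectively cotype $q$) of $Y$ to the sequence $g_k(\epsi')$, average over $\epsi'$, and use Kahane's inequality $(\E' \|g_k(\epsi')\|_Y^r)^{1/r} \cong \E' \|g_k(\epsi')\|_Y \cong \|f^{(k)}\|_{Y(H)}$ for $r \in [1,\infty)$ to recognise the outcome as $(\sum_k \|f^{(k)}\|_{Y(H)}^p)^{1/p}$ (respectively $(\sum_k \|f^{(k)}\|_{Y(H)}^q)^{1/q}$).

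The most delicate point is the order of integration in the cotype direction, since averaging in $\epsi'$ and taking an outer $q$-th root do not commute. My plan is to raise the pointwise-in-$\epsi'$ cotype inequality $(\sum_k \|g_k(\epsi')\|_Y^q)^{1/q} \leq C\,\E\|\sum_k \epsi_k g_k(\epsi')\|_Y$ to the $q$-th power, take $\E'$ with Fubini, and invoke Kahane twice to convert $(\E\E'\|\sum_k \epsi_k g_k(\epsi')\|_Y^q)^{1/q}$ back into the first moment $\E\E'\|\sum_k \epsi_k g_k(\epsi')\|_Y \cong \E\|\sum_k \epsi_k f^{(k)}\|_{Y(H)}$. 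The type direction is structurally analogous and only needs Jensen on the concave function $x \mapsto x^{1/p}$ ($p \geq 1$) after Kahane has reduced every moment to a first moment.
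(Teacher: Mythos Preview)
Your proof is correct. Part (1) is essentially identical to the paper's argument: the same Maurey/Khintchine square-function equivalence followed by the linear bound for $T$, and the same specialisation to the Hilbert transform for the UMD consequence. Your added remark about passing from $\ell^2_n$ to $\ell^2$ via the $\sigma$-Fatou property fills in a detail the paper leaves implicit.

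Part (2) is where your approach differs from the paper's. You argue directly: expand each $f^{(k)} \in Y(H)$ along an orthonormal basis, replace the $\ell^2$-sum by a Rademacher average in an auxiliary sequence $(\epsi'_j)$ via Maurey's identity, and then apply type (resp.\ cotype) of $Y$ pointwise in $\epsi'$, handling the exchange of $\E'$ with the $p$-th (resp.\ $q$-th) root through Jensen and iterated Kahane. This is correct; the ``Kahane twice'' step you flag for cotype works because, after one conditional application of Kahane in $\epsi'$, the quantity $\E'\|\sum_k \epsi_k g_k(\epsi')\|_Y$ is the $L^1(\epsi';Y)$-norm of a genuine Rademacher sum in $\epsi$, so Kahane applies again in that space.

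The paper instead takes a shorter, more structural route: it observes (via Kahane) that $Y$ has type $p$ precisely when the map $(x_n)_n \mapsto \sum_n \epsi_n x_n$ is bounded $\ell^p(Y) \to L^p(\Omega,Y)$, and that cotype $q$ (together with boundedness of the Rademacher projection, available since $Y$ is B-convex) is encoded by the boundedness of $f \mapsto ((Pf)_n)_n$ from $L^q(\Omega,Y)$ to $\ell^q(Y)$. Both $\ell^p(Y)$, $L^p(\Omega,Y)$, $L^q(\Omega,Y)$, $\ell^q(Y)$ are B-convex lattices, so part (1) applied with $H$ in place of $\ell^2$ immediately gives the same operators bounded on the $H$-valued spaces, i.e.\ type $p$ and cotype $q$ for $Y(H)$. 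This avoids your iterated-Kahane bookkeeping entirely, at the cost of invoking the Rademacher projection (K-convexity) in the cotype case. Your argument is more self-contained and makes the mechanism explicit; the paper's is more conceptual and reuses part (1) as a black box.
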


\begin{proof}
1. Let $(e_k)_k$ be the canonical basis of $\ell^2.$
We have 
\begin{align*}
\Bigl\| (T\otimes \Id_{\ell^2})\Bigl(\sum_{k = 1}^n y_k \otimes e_k\Bigr) \Bigr\|_{Z(\Omega'',\ell^2)} & =\biggl\|\Bigl( \sum_{k = 1}^n |T y_k|^2 \Bigr)^{\frac12} \biggr\|_Z \cong \E \Bigl\|\sum_{k = 1}^n \epsilon_k T y_k \Bigr\|_Z \\
& \leq R(\{T\}) \E \Bigl\|\sum_{k =1}^n \epsilon_k y_k \Bigr\|_Y \\
& \cong \|T\| \biggl\|\Bigl( \sum_{k = 1}^n |y_k|^2 \Bigr)^{\frac12} \biggr\|_Y.
\end{align*}
This shows the first part.
For the second part, we note that if $Y(\Omega')$ is UMD, then the Hilbert transform $H : L^p(\R,Y) \to L^p(\R,Y)$ is bounded for all $1 < p < \infty.$
Since $L^p(\R,Y)$ is again a B-convex Banach lattice, by the first part, we have that $H : L^p(\R,Y(\Omega',\ell^2)) \to L^p(\R,Y(\Omega',\ell^2))$ is bounded.
Hence by definition, $Y(\Omega',\ell^2)$ is a UMD (lattice).

2. Going into the Definition \ref{def-property-alpha-type-cotype} and using Kahane's inequality \cite[11.1]{DiJT}, $Y$ has type $p$ iff $T : \ell^p(Y) \to L^p(\Omega,Y),\: (x_n)_n \mapsto (\sum_n \epsi_n x_n)$ is bounded.
Since $\ell^p(Y)$ and $L^p(\Omega,Y)$ are B-convex Banach lattices, we infer by part 1. that $T \otimes \Id_{H} : \ell^p(Y(H)) \to L^p(\Omega,Y(H))$ is bounded, so that $Y(H)$ has type $p$.
For the cotype statement, we argue similarly.
If $Y$ has cotype $q$, then $T : L^q(\Omega,Y) \to \ell^q(Y),\: f \mapsto Pf = (\sum_n \epsi_n (Pf)_n)_n \mapsto (Pf)_n$ is bounded, where $P : L^q(\Omega,Y) \to L^q(\Omega,Y), f \mapsto \sum_n \int_\Omega \epsi_n(x) f(x) dx \epsi_n$ denotes the Rademacher projection, which is bounded since $Y$ is B-convex.
We infer by part 1. that $T \otimes \Id_H : L^q(\Omega,Y(H)) \to \ell^q(Y(H))$ is bounded, so that $Y(H)$ has cotype $q$.
\end{proof}

The following lemma will be used in combination with Proposition \ref{prop-Hormander-calculus-to-R-Hormander-calculus} to follow.

\begin{lemma}
\label{lem-property-alpha}
Let $Y$ be a UMD lattice and $p \in (1,\infty)$.
Then $L^p(\Omega,Y)$ has Pisier's property $(\alpha)$.
\end{lemma}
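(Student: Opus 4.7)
The plan is to reduce property $(\alpha)$ for $X := L^p(\Omega,Y)$ to two applications of the square function characterization \eqref{equ-Rademacher-square} of Lemma \ref{lem-UMD-lattice-Rademacher}: the first in the UMD lattice $X$ itself (which is a UMD lattice by the last sentence of Lemma \ref{lem-UMD-lattice-Fatou}), and the second in the auxiliary UMD lattice $X(\ell^2) := L^p(\Omega,Y(\Omega',\ell^2))$ (which is a UMD lattice by combining Lemma \ref{lem-UMD-lattice-Hilbert-extension} with Lemma \ref{lem-UMD-lattice-Fatou}). The goal is to show that both expectations appearing in property $(\alpha)$ are comparable to the common double-sum square function $\|(\sum_{n,k}|x_{n,k}|^2)^{1/2}\|_X$, whence they are comparable to each other.

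For the doubly-indexed Rademacher sum, I re-index $(n,k)$ as a single integer and apply \eqref{equ-Rademacher-square} directly in $X$ to obtain
\[\E''\Bigl\|\sum_{n,k}\epsi''_{n,k}x_{n,k}\Bigr\|_X \cong \Bigl\|\Bigl(\sum_{n,k}|x_{n,k}|^2\Bigr)^{1/2}\Bigr\|_X.\]
For the product Rademacher side, I condition on $\epsi'$ and apply \eqref{equ-Rademacher-square} in $X$ with respect to $\epsi$:
\[\E\E'\Bigl\|\sum_{n,k}\epsi_n\epsi'_k x_{n,k}\Bigr\|_X \cong \E'\Bigl\|\Bigl(\sum_n\Bigl|\sum_k\epsi'_k x_{n,k}\Bigr|^2\Bigr)^{1/2}\Bigr\|_X.\]
Identifying $z_k := (x_{n,k})_n$ as an element of $X(\ell^2)$, the pointwise identity $\|w\|_{X(\ell^2)} = \|(\sum_n|w_n|^2)^{1/2}\|_X$ shows that the right-hand side above equals $\E'\|\sum_k\epsi'_k z_k\|_{X(\ell^2)}$. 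A second application of \eqref{equ-Rademacher-square}, now in the UMD lattice $X(\ell^2)$, and unfolding the resulting modulus on the underlying measure space $\Omega\times\Omega'\times\N$, collapses the two iterated $\ell^2$-sums into the full double sum:
\[\E'\Bigl\|\sum_k\epsi'_k z_k\Bigr\|_{X(\ell^2)} \cong \Bigl\|\Bigl(\sum_k|z_k|^2\Bigr)^{1/2}\Bigr\|_{X(\ell^2)} = \Bigl\|\Bigl(\sum_{n,k}|x_{n,k}|^2\Bigr)^{1/2}\Bigr\|_X.\]
This gives property $(\alpha)$.

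The main subtlety that I foresee is the second application of \eqref{equ-Rademacher-square}: one has to interpret the lattice modulus on $X(\ell^2)$ at the scalar level of the underlying measure space $\Omega\times\Omega'\times\N$, and verify that the modulus, square, and sum operations performed inside $X(\ell^2)$ indeed collapse to the double sum $\sum_{n,k}|x_{n,k}|^2$ after taking the outer norm. This is exactly the content of the explicit description of the norm on $Y(\Omega',\ell^2)$ as the $Y$-norm of the pointwise $\ell^2$-norm over $\N$.
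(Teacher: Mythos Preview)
Your proof is correct and takes a genuinely different route from the paper's. The paper proceeds structurally: since $Y$ is UMD it has finite cotype, hence so does $L^p(\Omega,Y)$, and then invokes the black-box fact (from \cite{KW04}) that any Banach function space with finite cotype has property $(\alpha)$. You instead verify property $(\alpha)$ directly by two applications of the Khintchine--Maurey square function equivalence (Lemma~\ref{lem-UMD-lattice-Rademacher}), first in $X$ and then in the UMD lattice $X(\ell^2)$, and show that both Rademacher averages collapse to the same pointwise double square function $\bigl\|(\sum_{n,k}|x_{n,k}|^2)^{1/2}\bigr\|_X$.

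Your argument is self-contained and more elementary in the sense that it uses only tools already established in the paper (Lemmas~\ref{lem-UMD-lattice-Fatou}, \ref{lem-UMD-lattice-Rademacher}, \ref{lem-UMD-lattice-Hilbert-extension}), together with the identification $L^p(\Omega,Y)(\ell^2)=L^p(\Omega,Y(\ell^2))$ mentioned after Definition~\ref{def-Lambda}. The paper's approach is shorter to write but outsources the work to \cite{KW04}; in fact, the proof of that cited result is essentially the computation you carry out. The one point worth stating explicitly in your write-up is that the equivalence constants in Lemma~\ref{lem-UMD-lattice-Rademacher} are uniform in the choice of vectors, so that after conditioning on $\epsi'$ you may take $\E'$ on both sides of the equivalence without loss.
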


\begin{proof}
Since $Y$ is UMD, it has finite concavity, and so finite cotype \cite[Proposition 1.f.3]{LTz}.
Thus, also $L^p(\Omega,Y)$ has finite cotype \cite[Theorem 11.12]{DiJT}.
Then according to \cite[N 4.8 - 4.10]{KW04}, the Banach function space $L^p(\Omega,Y)$ has property $(\alpha)$.
\end{proof}

\subsection{Abstract H\"ormander Functional Calculus}
\label{subsec-abstract-Hormander}

We recall the necessary background on functional calculus that we will need in this article.
Let $-A$ be a generator of an analytic semigroup $(T_z)_{z \in \Sigma_\delta}$ on some Banach space $X,$ that is, $\delta \in (0,\frac{\pi}{2}],$ $\Sigma_\delta = \{ z \in \C \backslash \{ 0 \} :\: | \arg z | < \delta \},$ the mapping $z \mapsto T_z$ from $\Sigma_\delta$ to $B(X)$ is analytic, $T_{z+w} = T_z T_w$ for any $z,w \in \Sigma_\delta,$ and $\lim_{z \in \Sigma_{\delta'},\:|z| \to 0 } T_zx = x$ for any $x \in X$ and any strict subsector $\Sigma_{\delta'}$ of $\Sigma_\delta$.
We assume that $(T_z)_{z \in \Sigma_\delta}$ is a bounded analytic semigroup, which means $\sup_{z \in \Sigma_{\delta'}} \|T_z\| < \infty$ for any $\delta' < \delta.$

It is well-known \cite[Theorem 4.6, p. 101]{EN} that this is equivalent to $A$ being $\omega$-sectorial for $\omega = \frac{\pi}{2} - \delta,$ that is,
\begin{enumerate}
\item $A$ is closed and densely defined on $X;$
\item The spectrum $\sigma(A)$ is contained in $\overline{\Sigma_\omega}$ (in $[0,\infty)$ if $\omega = 0$);
\item For any $\omega' > \omega,$ we have $\sup_{\lambda \in \C \backslash \overline{\Sigma_{\omega'}}} \| \lambda (\lambda - A)^{-1} \| < \infty.$
\end{enumerate}
We say that $A$ is strongly $\omega$-sectorial if it is $\omega$-sectorial and has moreover dense range.
If $A$ is $\omega$-sectorial and does not have dense range, but $X$ is reflexive, which will always be the case in this article, then we may take the injective part $A_0$ of $A$ on $\overline{R(A)} \subseteq X$ \cite[Proposition 15.2]{KW04}, which then does have dense range and is strongly $\omega$-sectorial.
Here, $R(A)$ stands for the range of $A.$
Then $-A$ generates an analytic semigroup on $X$ if and only if so does $-A_0$ on $\overline{R(A)}.$
For $\theta \in (0,\pi),$ let 
\[ \HI(\Sigma_\theta) = \{ f : \Sigma_\theta \to \C :\: f \text{ analytic and bounded} \} \] equipped with the uniform norm $\|f\|_{\infty,\theta}.$
Let further 
\[ \HI_0(\Sigma_\theta) = \bigl\{ f \in \HI(\Sigma_\theta):\: \exists \: C ,\epsilon > 0 :\: |f(z)| \leq C \min(|z|^\epsilon,|z|^{-\epsilon}) \bigr\}.\]
For an $\omega$-sectorial operator $A$ and $\theta \in (\omega,\pi),$ one can define a functional calculus $\HI_0(\Sigma_\theta) \to B(X),\: f \mapsto f(A)$ extending the ad hoc rational calculus, by using a Cauchy integral formula.
If moreover, there exists a constant $C < \infty$ such that $\|f(A)\| \leq C \| f \|_{\infty,\theta},$ then $A$ is said to have a bounded $\HI(\Sigma_\theta)$ calculus and if $A$ has dense range, the above functional calculus can be extended to a bounded Banach algebra homomorphism $\HI(\Sigma_\theta) \to B(X).$
If $A$ has a bounded $\HI(\Sigma_\theta)$ calculus, and does not have dense range, but $X$ is reflexive, then for $f \in \HI(\Sigma_\theta)$ such that $f(0)$ is well-defined, we can define
\[ f(A) = \begin{bmatrix} f(A_0) & 0 \\ 0 & f(0) P_{N(A)} \end{bmatrix} : \: \overline{R(A)} \oplus N(A) \to \overline{R(A)} \oplus N(A) ,\]
where $P_{N(A)}$ denotes the projection onto the null-space of $A$ along the decomposition $X = \overline{R(A)} \oplus N(A)$.
This calculus also has the property $f_z(A) = T_z$ for $f_z(\lambda) = \exp(-z \lambda),\: z \in \Sigma_{\frac{\pi}{2} - \theta}.$
For further information on the $\HI$ calculus, we refer e.g. to \cite{KW04}. We now turn to H\"ormander function classes and their calculi.
\begin{defi}
\label{defi-Hoermander-class}
Let $\alpha > \frac12.$
We define the H\"ormander class by 
\[\Hor^\alpha_2 = \bigl\{ f : [0,\infty) \to \C \text{ is bounded and continuous on }(0,\infty), \:  \underbrace{|f(0)| + \sup_{R > 0} \| \phi f(R \,\cdot) \|_{W^\alpha_2(\R)}}_{=:\|f\|_{\Hor^\alpha_2}}< \infty \bigr\}.\]
Here $\phi$ is any $C^\infty_c(0,\infty)$ function different from the constant 0 function (different choices of functions $\phi$ resulting in equivalent norms) and $W^\alpha_2(\R)$ is the classical Sobolev space.
\end{defi}

The term $|f(0)|$ is not needed in the functional calculus applications of $\Hor^\alpha_2$ if $A$ is in addition injective.
We can base a H\"ormander functional calculus on the $\HI$ calculus by the following procedure.

\begin{defi}
\label{def-Hormander-calculus}
We say that a $0$-sectorial operator $A$ has a bounded $\Hor^\alpha_2$ calculus if for some $\theta \in (0,\pi)$ and any $f \in \HI(\Sigma_\theta),$
$\|f(A)\| \leq C \|f\|_{\Hor^\alpha_2} \left( \leq C' \left(\|f\|_{\infty,\theta} + |f(0)|\right) \right).$
In this case, the $\HI(\Sigma_\theta)$ calculus can be extended to a bounded Banach algebra homomorphism $\Hor^\alpha_2 \to B(X)$ \cite{KrW3}.
We say that $A$ has an $R$-bounded $\Hor^\alpha_2$ calculus, if it has a bounded $\Hor^\alpha_2$ calculus and
$\left\{ m(A) : \: \|m\|_{\Hor^\alpha_2} \leq 1 \right\}$ is $R$-bounded.
\end{defi}

The H\"ormander norm is dilation invariant, i.e. $\|f(t \cdot)\|_{\Hor^\alpha_2} = \|f\|_{\Hor^\alpha_2}$ for any $t > 0$.
Therefore, the following family of (discrete) dilates of a $C^\infty_c(\R_+)$ function will play an important role.

\begin{defi}
\label{def-dyad}
Let $\dyad_0 \in C^\infty_c(\R_+)$ such that $\supp (\dyad_0) \subseteq [\frac12,2]$.
We define for $n \in \Z$ the dilates $\dyad_n(t) = \dyad_0(2^{-n}t)$ so that $\supp (\dyad_n) \subseteq [\frac12 \cdot 2^n , 2 \cdot 2^n]$.
Assume that $\sum_{n \in \Z} \dyad_n(t) = 1$ for any $t > 0$.
Then we call $(\dyad_n)_{n \in \Z}$ a dyadic partition of $\R_+ = (0,\infty)$.
For the existence of such a dyadic partition, we refer to \cite[6.1.7 Lemma]{BeL}.
\end{defi}

In the course of the Maximal H\"ormander Functional Calculus Theorem in Section \ref{sec-main}, we need to decompose general spectral multipliers by means of special spectral multiplier pieces involving the above dyadic partition.
To reassemble the pieces together, mere boundedness of the pieces is not sufficient, and we will need the following self-improvement of a H\"ormander functional calculus.

\begin{prop}
\label{prop-Hormander-calculus-to-R-Hormander-calculus}
Let $A$ be a $0$-sectorial operator on a Banach space $X$ with property $(\alpha)$.
If $A$ has a bounded $\Hor^\alpha_2$ calculus, then it has an $R$-bounded $\Hor^\gamma_2$ calculus
for any parameter $\gamma > \alpha + \frac{1}{\type X} - \frac{1}{\cotype X}$ such that $\gamma \geq \alpha + \frac12$.
\end{prop}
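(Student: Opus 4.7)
The plan is to combine a Fourier-series expansion on each dyadic band with a Paley-Littlewood-type decomposition coming from property $(\alpha)$ and the bounded $\Hor^\alpha_2$ calculus, then exploit type and cotype to convert Rademacher averages into $\ell^r$-sums.

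I fix a dyadic partition $(\dyad_n)_{n \in \Z}$ as in Definition \ref{def-dyad} and a smooth bump $\tilde\dyad \in C^\infty_c(\R_+)$ with $\tilde\dyad \equiv 1$ on $\supp \dyad_0$. For $m \in \Hor^\gamma_2$ with $\|m\|_{\Hor^\gamma_2} \leq 1$, the rescaled pieces $g_n(\lambda) := m(2^n \lambda) \tilde\dyad(\lambda)$ lie in $W^\gamma_2(\R)$ with uniform compact support and $\|g_n\|_{W^\gamma_2} \lesssim 1$. Expanding $g_n$ in a Fourier series on a fixed interval containing $\supp \tilde\dyad$ and substituting back into the dyadic decomposition of $m$ gives
\[ m(A) = \sum_{n,k \in \Z} c_{n,k}\, \dyad_n(A)\, e^{i k 2^{-n} A / L}, \]
with Fourier coefficients satisfying $\sum_k (1+|k|)^{2\gamma} |c_{n,k}|^2 \lesssim 1$ uniformly in $n$. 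By dilation invariance of the $\Hor^\alpha_2$ norm and the bounded $\Hor^\alpha_2$ calculus hypothesis, $\|\dyad_n(A)\, e^{i k 2^{-n} A / L}\| \lesssim (1+|k|)^\alpha$ uniformly in $n$, since $\dyad_0(\lambda) e^{ik\lambda/L}$ has $\Hor^\alpha_2$-norm $\lesssim (1+|k|)^\alpha$.

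The key structural ingredient is that, as the $\dyad_n$ lie uniformly in $\Hor^\alpha_2$ and adjacent $\dyad_n$ overlap only finitely, the function $\sum_n \epsi_n \dyad_n$ has $\Hor^\alpha_2$-norm $\lesssim 1$ uniformly in the signs, giving the one-sided Paley-Littlewood bound $\E\|\sum_n \epsi_n \dyad_n(A) x\|_X \lesssim \|x\|_X$; under property $(\alpha)$ a standard duality/randomization argument yields the reverse inequality, hence the equivalence $\|x\|_X \cong \E\|\sum_n \epsi_n \dyad_n(A) x\|_X$. Pairing this with Definition \ref{def-property-alpha-type-cotype} through type $\type X$ and cotype $\cotype X$ provides two-sided square function bounds relating $\|x\|_X$ to $\ell^r$-sums of $\|\dyad_n(A) x\|_X$ for $r = \type X$ and $r = \cotype X$. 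To conclude, given $m_1, \ldots, m_N \in \Hor^\gamma_2$ of unit norm and $x_1, \ldots, x_N \in X$, I substitute the Fourier expansion into $\E \| \sum_j \epsi_j m_j(A) x_j \|_X$, apply property $(\alpha)$ to introduce an independent Rademacher sequence $\epsi'_n$ at the dyadic-scale index, replace the $\epsi'_n$-average by an $\ell^{\cotype X}$-sum via cotype, and then apply Cauchy-Schwarz in $k$ against the Fourier coefficient bound together with the norm bound on $\dyad_n(A) e^{i k 2^{-n} A / L}$; this converges exactly when $\gamma > \alpha + 1/\type X - 1/\cotype X$ together with the fallback $\gamma \geq \alpha + 1/2$. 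Finally, type $\type X$ converts the remaining $\ell^{\type X}$-sum over $n$ back to a Rademacher average of the $x_j$'s, yielding $\E\|\sum_j \epsi_j m_j(A) x_j\| \lesssim \E\|\sum_j \epsi_j x_j\|$.

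The main obstacle is the careful interlocking of property $(\alpha)$, type and cotype across the triple sum over $j$, $n$, and $k$: at each stage one must choose the correct side of the Rademacher-versus-$\ell^r$ equivalence so that the Cauchy-Schwarz step in $k$ lands exactly at the threshold $1/\type X - 1/\cotype X$. A related subtlety is controlling the wave-type pieces $e^{i k 2^{-n} A / L}$ without assuming bounded imaginary powers of $A$; here the localization by $\dyad_n(A)$, ensuring that the symbol $\dyad_n(\lambda) e^{i k 2^{-n} \lambda / L}$ lies in $\Hor^\alpha_2$ with norm $\lesssim (1+|k|)^\alpha$, is essential.
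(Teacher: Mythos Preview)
The paper does not give a self-contained argument: it simply invokes \cite[Lemma 3.9 (3), Theorem 6.1 (2)]{KrW3}, together with the embedding $\Hor^\gamma_2 \hookrightarrow \Hor^\gamma_r$ for $r \in (1,2]$. Your sketch is an attempt to reproduce directly the machinery behind those references, and indeed the ingredients you list---dyadic localisation, Fourier series on each band, the Paley--Littlewood equivalence, property $(\alpha)$, and a type/cotype input---are exactly those used in \cite{KrW3}. So the approaches are the same in spirit; you are unpacking the citation.

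There is, however, a genuine gap in the way you describe the type/cotype step. You write that you ``replace the $\epsi'_n$-average by an $\ell^{\cotype X}$-sum via cotype'' and later ``type $\type X$ converts the remaining $\ell^{\type X}$-sum over $n$ back to a Rademacher average''. Both of these go the wrong way for the upper bound you need: cotype $q$ gives $\bigl(\sum_n\|y_n\|^q\bigr)^{1/q}\lesssim \E\bigl\|\sum_n\epsi_n y_n\bigr\|$, which is a \emph{lower} bound on the Rademacher average, and type $p$ gives $\E\bigl\|\sum_n\epsi_n y_n\bigr\|\lesssim\bigl(\sum_n\|y_n\|^p\bigr)^{1/p}$, again the wrong direction if you start from an $\ell^p$-sum and want to land on a Rademacher sum bounded above. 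If you try to chain these as written, after the Cauchy--Schwarz in $k$ you end up with an $\ell^{\type X}$-sum over $(j,n)$ of $\|\tilde\dyad_n(A)x_j\|$, which dominates rather than is dominated by the $\ell^{\cotype X}$-sum that cotype would control from the Paley--Littlewood side; the argument does not close. In \cite{KrW3} the type/cotype exponent enters not through such a direct sandwich on dyadic pieces but through the $R$-boundedness criterion for ranges of bounded homomorphisms from $\Hor^\alpha_r$ with $\tfrac1r>\tfrac{1}{\type X}-\tfrac{1}{\cotype X}$, combined with the embedding between H\"ormander classes. You would need to recast the final step along those lines rather than as a bare type/cotype swap on the Littlewood--Paley pieces.
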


\begin{proof}
This follows from \cite[Lemma 3.9 (3), Theorem 6.1 (2)]{KrW3}, noting that the $\Hor^\beta_r$ class there with $\frac1r > \frac{1}{\type X} - \frac{1}{\cotype X},\:r \in (1,2]$, is larger than our $\Hor^\gamma_2$ class for $\gamma = \beta$.
\end{proof}

\begin{lemma}
\label{lem-R-bounded-wave-operators}
Assume that a $0$-sectorial operator $A$ has an $R$-bounded $\Hor^\alpha_2$ calculus.
Then the following operator family is $R$-bounded with $R$-bound independent of $s \in \R$:
\[ \left\{ \langle s \rangle^{-\alpha} (1 + 2^n A)^{-\alpha} \exp(i2^n s A) : \: n \in \Z \right\} ,\]
where from now on, we use the notation $\langle s \rangle = 1 + |s|$.
\end{lemma}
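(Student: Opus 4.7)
The plan is to recognize each operator in the family as $m_{n,s}(A)$ for a suitable scalar multiplier $m_{n,s}$, and to show that the functions $\{m_{n,s}\}_{n\in\Z,\,s\in\R}$ form a bounded subset of $\Hor^\alpha_2$ with a uniform bound. Once this is established, the assumed $R$-boundedness of the $\Hor^\alpha_2$ calculus of $A$ immediately yields $R$-boundedness of $\{m_{n,s}(A)\}$ with $R$-bound independent of both $n$ and $s$.

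First, I would normalise away the index $n$ by dilation. Writing $g_s(\mu) := \langle s\rangle^{-\alpha}(1+\mu)^{-\alpha}\exp(i s\mu)$, the change of variable $\mu = 2^n\lambda$ gives $m_{n,s}(\lambda) = g_s(2^n\lambda)$. Since the $\Hor^\alpha_2$ norm is dilation invariant, $\|m_{n,s}\|_{\Hor^\alpha_2}=\|g_s\|_{\Hor^\alpha_2}$, so it suffices to bound $\|g_s\|_{\Hor^\alpha_2}$ uniformly in $s\in\R$. We already have $|g_s(0)| = \langle s\rangle^{-\alpha} \leq 1$. Fix a cutoff $\phi\in C^\infty_c(0,\infty)$, supported say in $[\tfrac12,2]$, and decompose
\[ \phi(t)\,g_s(Rt) = \langle s\rangle^{-\alpha}\, e^{isRt}\cdot h_R(t),\qquad h_R(t) := \phi(t)(1+Rt)^{-\alpha}. \]

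The main estimate is then the combination of two standard facts. On the one hand, on the compact support of $\phi$ one has $\left|\frac{d^k}{dt^k}(1+Rt)^{-\alpha}\right|\lesssim R^k\langle R\rangle^{-\alpha-k}\lesssim \langle R\rangle^{-\alpha}$ for all integers $k$, so $\|h_R\|_{W^\alpha_2(\R)}\lesssim \langle R\rangle^{-\alpha}$ (using finitely many $L^2$ norms of derivatives, or Plancherel). On the other hand, Peetre's inequality in Fourier variables gives the modulation estimate $\|e^{i\xi\,\cdot\,} h\|_{W^\alpha_2} \lesssim \langle\xi\rangle^\alpha\|h\|_{W^\alpha_2}$ for any $\xi\in\R$ and any $h\in W^\alpha_2(\R)$, valid also for non-integer $\alpha$. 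Applying this with $\xi = sR$ and combining with the elementary bound $\langle sR\rangle\leq\langle s\rangle\langle R\rangle$, we obtain
\[ \|\phi\cdot g_s(R\,\cdot)\|_{W^\alpha_2(\R)} \lesssim \langle s\rangle^{-\alpha}\langle sR\rangle^\alpha\langle R\rangle^{-\alpha} \leq \langle s\rangle^{-\alpha}\cdot\langle s\rangle^\alpha\langle R\rangle^\alpha\cdot\langle R\rangle^{-\alpha} = 1, \]
uniformly in $R>0$ and $s\in\R$. Hence $\sup_{n\in\Z,\,s\in\R}\|m_{n,s}\|_{\Hor^\alpha_2}<\infty$.

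The main obstacle in this argument is the non-trivial prefactor $\langle s\rangle^{-\alpha}$ in the statement: without it, the family fails to be $R$-bounded because the $e^{i2^nsA}$ factor alone carries a modulation cost that blows up as $|s|\to\infty$. The delicate point is to see that this cost is exactly cancelled by the combined decay of $(1+2^nA)^{-\alpha}$ (seen through $h_R$) and the scalar factor $\langle s\rangle^{-\alpha}$; the computation above verifies that the three factors $\langle s\rangle^{-\alpha}$, $\langle sR\rangle^\alpha$ (from the modulation), and $\langle R\rangle^{-\alpha}$ (from the resolvent power) compensate precisely. Having established the uniform $\Hor^\alpha_2$ bound, the conclusion follows from the definition of $R$-bounded $\Hor^\alpha_2$ calculus (Definition \ref{def-Hormander-calculus}) applied to the normalised family $\{m_{n,s}/C\}$.
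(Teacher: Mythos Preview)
Your proof is correct and follows the same high-level strategy as the paper: reduce to a uniform bound $\sup_{n,s}\|m_{n,s}\|_{\Hor^\alpha_2}<\infty$ and then invoke the $R$-bounded calculus. The execution differs, however. The paper factors the multiplier as
\[
m_n(\lambda)=\Bigl[\langle s\rangle^{-\alpha}\frac{(1+2^n|s|\lambda)^\alpha}{(1+2^n\lambda)^\alpha}\Bigr]\cdot\Bigl[(1+2^n|s|\lambda)^{-\alpha}e^{i2^ns\lambda}\Bigr],
\]
bounds the second factor in $\Hor^\alpha_2$ by citing \cite[Lemma 3.9(2)]{KrW3}, bounds the first factor in $\HI(\Sigma_\theta)$ by an elementary estimate, and concludes via the embedding $\HI(\Sigma_\theta)\hookrightarrow\Hor^\alpha_2$ together with the Banach algebra property of $\Hor^\alpha_2$. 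You instead work directly with the localised pieces $\phi\cdot g_s(R\,\cdot)$, combining the derivative estimate $\|h_R\|_{W^\alpha_2}\lesssim\langle R\rangle^{-\alpha}$ with the modulation bound from Peetre's inequality. Your route is more self-contained (it does not rely on the external lemma or the algebra structure) and makes the compensation mechanism between $\langle s\rangle^{-\alpha}$, $\langle sR\rangle^{\alpha}$, and $\langle R\rangle^{-\alpha}$ fully explicit; the paper's route is shorter on the page by outsourcing the core estimate to \cite{KrW3}.
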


\begin{proof}
Writing $m_n(\lambda) = \langle s \rangle^{-\alpha} ( 1 + 2^n \lambda)^{-\alpha} \exp(i2^n s\lambda)$, the lemma follows from Definition \ref{def-Hormander-calculus} if we can estimate $\sup_{n \in \Z}\|m_n\|_{\Hor^\alpha_2} < \infty$.
Decompose $m_n(\lambda) = \langle s \rangle^{-\alpha} \frac{(1 + 2^n |s| \lambda)^\alpha}{(1 + 2^n \lambda)^\alpha} \cdot f_n(\lambda)$ with $f_n(\lambda) = ( 1 + 2^n |s| \lambda)^{-\alpha} \exp(i2^n s\lambda)$.
According to \cite[Lemma 3.9 (2)]{KrW3}, $\|f_n\|_{\Hor^\alpha_2} \lesssim 1$.
For the other factor, we have for any $\theta \in (0, \pi)$
\[ \left\| \langle s \rangle^{-\alpha} \frac{(1+ 2^n |s| \lambda)^\alpha}{(1 + 2^n \lambda)^\alpha} \right\|_{H^\infty(\Sigma_{\theta})} = \left\| \langle s \rangle^{-\alpha} \frac{(1+ |s| \lambda)^\alpha}{(1 + \lambda)^\alpha} \right\|_{H^\infty(\Sigma_{\theta})} \lesssim 1, \]
since $|1 + |s| \lambda| \leq (1 + |s|)(1 + |\lambda|) \lesssim_\theta (1 + |s|) \left| 1 + \lambda \right|$.
Now we conclude by the two facts that $\HI(\Sigma_\theta) \hookrightarrow \Hor^\alpha_2$ \cite[Lemma 3.2 (2)]{KrW3} and that $\Hor^\alpha_2$ is a Banach algebra \cite[Lemma 3.2 (1)]{KrW3}.
\end{proof}

The following lemmata concerning decomposition/expansion of spectral multipliers will be used in the proof of Theorem \ref{thm-main}.
Here, Lemma \ref{lem-Hormander-calculus-Paley-Littlewood} is sometimes called Paley-Littlewood equivalence.

\begin{lemma}
\label{lem-Hormander-convergence-lemma}
Let $A$ be a $0$-sectorial operator with $\Hor^\alpha_2$ calculus.
Let $(\dyad_n)_{n \in \Z}$ be a dyadic partition of $\R_+$.
Then for any $x \in \overline{R(A)}$ (e.g. $x = m(A)y$ for some $y \in X$ and $m \in \Hor^\alpha_2$ with $m(0) = 0$), we have $x = \sum_{n \in \Z} \dyad_n(A) x$ (convergence in $X$).
\end{lemma}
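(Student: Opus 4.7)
Denote $S_N := \sum_{|n|\leq N} \dyad_n \in C^\infty_c(\R_+)$, so the assertion is $S_N(A) x \to x$ in $X$ for every $x \in \overline{R(A)}$. My plan is a standard uniform-boundedness plus density argument. First, I would establish $\sup_N \|S_N\|_{\Hor^\alpha_2} < \infty$: since $S_N(0) = 0$, it suffices to bound $\sup_{R > 0} \|\phi \cdot S_N(R\cdot)\|_{W^\alpha_2}$ for a fixed $\phi \in C^\infty_c(\R_+)$, and the dyadic supports $\supp(\dyad_n) \subseteq [2^{n-1}, 2^{n+1}]$ ensure that for each $R$ only $O(1)$ pieces $\dyad_n(R\cdot)$ meet $\supp(\phi)$, independently of $N$ and $R$. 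Combined with the Banach algebra property of $W^\alpha_2$ (with $\phi$ acting as multiplier) and the dilation invariance of the relevant norm, this yields a uniform bound. The assumed $\Hor^\alpha_2$-calculus of $A$ then gives $\sup_N \|S_N(A)\|_{B(X)} < \infty$.

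On the subspace $\mathcal{D} := \mathrm{span}\{\eta(A) y : \eta \in C^\infty_c(\R_+),\, y \in X\}$, convergence is immediate: for $x = \eta(A) y$ with $\supp(\eta) \subseteq [2^{-N_0+1}, 2^{N_0-1}]$, once $N \geq N_0$ we have $S_N \eta = \eta$ pointwise on $\R_+$, and by multiplicativity of the calculus $S_N(A) x = (S_N \eta)(A) y = x$. To conclude via the uniform bound, it remains to verify that $\mathcal{D}$ is dense in $\overline{R(A)}$. Since $A$ has bounded $\HI(\Sigma_\theta)$-calculus (contained in the $\Hor^\alpha_2$ one), its restriction to $\overline{R(A)}$ is strongly $0$-sectorial with bounded $\HI$-calculus, and McIntosh's approximation theorem gives density of $\{\psi(A) y : \psi \in \HI_0(\Sigma_\theta),\, y \in X\}$ in $\overline{R(A)}$. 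It then suffices to approximate each $\psi(A) y$ by $(S_N \psi)(A) y \in \mathcal{D}$: the $\HI_0$-decay of $\psi$ at both $0$ and $\infty$ on $\R_+$ should force $(1-S_N)\psi \to 0$ in a mode (either pointwise with bounded $\Hor^\alpha_2$-norm, or in $\Hor^\alpha_2$-norm directly) that transfers to strong operator convergence of the calculus.

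The main obstacle is precisely this last approximation step, i.e.\ controlling $(1-S_N)\psi$ in a sense strong enough for the $\Hor^\alpha_2$ calculus to propagate the limit. One route is a $\Hor^\alpha_2$-convergence lemma (pointwise convergence and uniform $\Hor^\alpha_2$-bound implying strong operator convergence), which is available from the construction of the $\Hor^\alpha_2$-calculus in \cite{KrW3} via wave-operator / resolvent representations and dominated convergence. The alternative, more hands-on, route is to prove $\|(1-S_N)\psi\|_{\Hor^\alpha_2} \to 0$ directly: for $|\psi(\lambda)| \lesssim \min(\lambda^\epsilon, \lambda^{-\epsilon})$, one splits $\sup_{R > 0}\|\phi \cdot ((1-S_N)\psi)(R\cdot)\|_{W^\alpha_2}$ according to whether $R$ lies in the saturated range of $S_N$ (where $(1-S_N)\psi(R\cdot)$ vanishes on $\supp(\phi)$) or not (where $\psi(R\cdot)$ itself is uniformly small on $\supp(\phi)$ for large $N$), and combines these estimates using the dilation invariance of the norm. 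Either way closes the argument.
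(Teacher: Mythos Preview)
The paper does not supply its own proof of this lemma; it is stated as a background fact (with the subsequent paragraph noting that the calculus core $D_A$ is dense in $\overline{R(A)}$ as a consequence, and referring to \cite{KrW3} for the terminology). So there is no ``paper's proof'' to compare against directly.

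Your argument is correct and is the standard one. The uniform bound $\sup_N\|S_N\|_{\Hor^\alpha_2}<\infty$ follows exactly as you say from the finite overlap of the dyadic supports, and convergence on $\mathcal D$ is immediate from multiplicativity. For the density step, either of your two routes works; the first (pointwise convergence with uniform $\Hor^\alpha_2$ bound implies strong operator convergence) is precisely the convergence lemma that underlies the construction of the $\Hor^\alpha_2$ calculus in \cite{KrW3}, so invoking it is legitimate and not circular. In fact, that convergence lemma already short-circuits the whole argument: on $\overline{R(A)}$ the restriction $A_0$ is injective, $S_N\to 1_{(0,\infty)}$ pointwise with uniformly bounded $\Hor^\alpha_2$ norm, and hence $S_N(A_0)\to 1_{(0,\infty)}(A_0)=\Id_{\overline{R(A)}}$ strongly, which is the lemma. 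Your more hands-on route (b) also goes through once one uses Cauchy estimates in $\Sigma_\theta$ to control the $W^\alpha_2$ norms of the dyadic pieces of $(1-S_N)\psi$, not merely its sup norm.
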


In the setting of the above Lemma \ref{lem-Hormander-convergence-lemma}, we obtain that $D_A := \{ \phi(A)x :\: x \in X,\: \phi \in C^\infty_c(\R_+) \}$ is a dense subspace of $\overline{R(A)}$.
In \cite{KrW3}, $D_A$ is called the calculus core of $A$.

\begin{lemma}
\label{lem-representation-formula-wave-operators}
Let $A$ be a $0$-sectorial operator having a $\Hor^\alpha_2$ calculus.
Let $m \in W^\alpha_2(\R)$ with compact support in $\R_+$.
Then for any $x$ belonging to the calculus core $D_A$, we have
\[ m(A)x = \frac{1}{2\pi} \int_\R \hat{m}(s) \exp(isA) x ds ,\]
where the integral is a Bochner integral in $X$.
\end{lemma}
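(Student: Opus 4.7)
The plan is to transfer the scalar Fourier inversion formula $m(\lambda) = (2\pi)^{-1}\int_\R \hat{m}(s) e^{is\lambda}\, ds$ to the operator level via the $\Hor^\alpha_2$ calculus, proceeding in three stages: Bochner integrability of the integrand, verification of the identity on a smooth dense subclass, and extension by density.

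First I would establish that $s \mapsto \hat{m}(s)\exp(isA)x$ is Bochner integrable. Since $x$ lies in the calculus core, write $x = \phi(A)y$ with $\phi \in C^\infty_c(\R_+)$ and $y \in X$; then $e^{is(\cdot)}\phi \in C^\infty_c(\R_+)$ for every $s \in \R$, so $\exp(isA)x = (e^{is(\cdot)}\phi)(A)y$ is defined via the $\Hor^\alpha_2$ calculus and $s \mapsto \exp(isA)x$ is strongly continuous. An elementary Fourier-side estimate (Peetre's inequality applied to $\widehat{e^{is(\cdot)}\phi}(\xi) = \hat\phi(\xi - s)$) yields $\|e^{is(\cdot)}\phi\|_{W^\alpha_2(\R)} \lesssim \langle s\rangle^\alpha$, and combined with Remark \ref{rem-Lambda-dilation-invariant} (equivalence of $\Hor^\alpha_2$ and $W^\alpha_2$ norms on compactly supported functions) this gives $\|\exp(isA)x\|_X \lesssim \langle s\rangle^\alpha \|y\|_X$. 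On the other hand the compactness of $\supp m$ forces $\hat{m}$ to be entire of exponential type (Paley--Wiener), while $m \in W^\alpha_2$ gives $\langle s\rangle^\alpha \hat{m} \in L^2(\R)$; combining these decay properties I obtain integrability of $|\hat{m}(s)|\|\exp(isA)x\|_X$ on $\R$.

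Next I would verify the identity on the dense subclass $m \in C^\infty_c(\R_+)$, where $\hat{m}$ is Schwartz and the scalar Fourier inversion $m(\lambda) = (2\pi)^{-1}\int_\R \hat{m}(s) e^{is\lambda}ds$ is absolutely convergent pointwise. Multiplying by $\phi$ and exploiting $m(A)x = (m\phi)(A)y$, a Fubini argument justified by the integrability above together with the linearity and continuity of the $\Hor^\alpha_2$ calculus yields
\[ m(A)x = (m\phi)(A)y = \frac{1}{2\pi}\int_\R \hat{m}(s) (e^{is(\cdot)}\phi)(A)y\, ds = \frac{1}{2\pi}\int_\R \hat{m}(s) \exp(isA)x\, ds. \]
To extend to general $m \in W^\alpha_2(\R)$ with compact support in $\R_+$, I would fix a compact $K \subset \R_+$ containing $\supp m$ and argue by density of $C^\infty_c(K^{\mathrm{int}})$ in $\{m \in W^\alpha_2(\R) : \supp m \subset K\}$: the left-hand side $m \mapsto m(A)x$ is continuous in $\|\cdot\|_{W^\alpha_2}$ by Remark \ref{rem-Lambda-dilation-invariant}, and the right-hand side inherits continuity from the integrability bound of the first step.

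The hard part will be the integrability step: one has to control $\int_\R |\hat{m}(s)|\langle s\rangle^\alpha ds$ by a constant times $\|m\|_{W^\alpha_2}$ that depends on the fixed compact carrying $\supp m$, balancing the inherent polynomial growth $\langle s\rangle^\alpha$ of the wave operators $\exp(isA)$ on the calculus core against the weighted $L^2$ decay of $\hat{m}$ provided by $m \in W^\alpha_2$. Here the Paley--Wiener analyticity of $\hat{m}$ arising from the compact support of $m$ is used in an essential way, and it is this quantitative bound that powers the density argument in the final step.
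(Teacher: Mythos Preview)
Your overall strategy --- scalar Fourier inversion, the growth bound $\|\exp(isA)x\|_X \lesssim \langle s\rangle^\alpha$ for $x\in D_A$, then density --- is the natural one and your verification for $m\in C^\infty_c(\R_+)$ is fine, but the integrability step that you yourself flag as ``the hard part'' has a genuine gap. You assert that Paley--Wiener analyticity of $\hat m$ together with $\langle s\rangle^\alpha\hat m\in L^2(\R)$ yields $\int_\R |\hat m(s)|\,\langle s\rangle^\alpha\,ds < \infty$; this does not follow. The Paley--Wiener theorem supplies exponential bounds for $\hat m$ \emph{off} the real axis and contributes nothing on $\R$ beyond the continuity already forced by $m\in L^1$. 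On the real line you only have $\langle s\rangle^\alpha\hat m\in L^2$, and passing to $L^1$ costs half a derivative: by Cauchy--Schwarz $\int_\R|\hat m(s)|\langle s\rangle^\alpha\,ds\le \|m\|_{W^{\alpha+1/2+\varepsilon}_2}\,\|\langle\cdot\rangle^{-1/2-\varepsilon}\|_{L^2}$, and there is no constant $C_K$ with $\|m\|_{W^{\alpha+1/2+\varepsilon}_2}\le C_K\|m\|_{W^\alpha_2}$ for all $m$ supported in a fixed compact $K\subset\R_+$ (otherwise iteration would force every such $m$ to be $C^\infty$). Consequently the continuity bound on the right-hand side that powers your density argument is not available under the stated hypothesis $m\in W^\alpha_2$.

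One clean fix is simply to assume the extra half-derivative on $m$, which is harmless for every application in this paper (in Theorem~\ref{thm-main} one has $m\in W^c_2$ with $c>\alpha+1$). If you insist on the hypothesis $m\in W^\alpha_2$ as stated, keep your argument for $m\in C^\infty_c(\R_+)$, and for general $m$ approximate by $m_k\in C^\infty_c$ with $m_k\to m$ in $W^\alpha_2$ and supports in a common compact; then $\frac{1}{2\pi}\int_\R\widehat{m_k}(s)\exp(isA)x\,ds=m_k(A)x\to m(A)x$ in $X$ by continuity of the calculus. What remains is to identify this limit as a genuine Bochner integral, and this needs a separate argument not based on the crude norm bound --- for instance, pairing with $x^*\in X^*$ and using that $s\mapsto\langle \exp(isA)x,x^*\rangle$ is, up to Fourier transform, a compactly supported distribution on $\R_+$ (since $x\in D_A$). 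The paper simply defers to \cite[Proof of Lemma~4.6~(3)]{KrW3}, where the formula is obtained along these lines rather than via the direct $L^1$ bound you propose.
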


\begin{proof}
This follows from \cite[Proof of Lemma 4.6 (3)]{KrW3}.
\end{proof}

\begin{lemma}
\label{lem-Hormander-calculus-Paley-Littlewood}
Let $A$ be a $0$-sectorial operator with $\Hor^\alpha_2$ calculus for some $\alpha > \frac12$.
Let $(\dyad_n)_{n \in \Z}$ be a dyadic partition of $\R_+$.
Then we have the following so-called Paley-Littlewood decomposition for $x \in \overline{R(A)}$:
\[ \|x\|_X \cong \E \left\| \sum_{n \in \Z} \epsi_n \dyad_n(A) x \right\|_X ,\]
where the series $\sum_{n \in \Z} \dyad_n(A)x$ converges unconditionally in $X$.
\end{lemma}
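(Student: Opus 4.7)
The plan is to prove the norm equivalence via an upper bound and a lower bound separately, and then deduce the unconditional convergence as a corollary.

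\emph{Upper bound.} For any sign sequence $\epsi = (\epsi_n)_{n \in \Z} \in \{-1,+1\}^{\Z}$, the multiplier $m_\epsi(\lambda) = \sum_n \epsi_n \dyad_n(\lambda)$ is a.e.\ a finite sum (at any $\lambda > 0$ at most two summands are nonzero) and satisfies $\sup_\epsi \|m_\epsi\|_{\Hor^\alpha_2} < \infty$. Indeed, fixing a window function $\phi \in C^\infty_c(\R_+)$ in the definition of the H\"ormander norm, only finitely many $n$ (with a bound independent of $R$) give $\phi(\cdot)\dyad_0(2^{-n} R \cdot) \not\equiv 0$, since $\supp \phi$ is compact in $\R_+$ and $\supp \dyad_0 \subseteq [1/2,2]$. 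Each nonzero summand is a bounded logarithmic translate of $\phi \dyad_0$, hence has uniformly bounded $W^\alpha_2$ norm. By the assumed $\Hor^\alpha_2$ calculus,
$$\Bgnorm{\sum_n \epsi_n \dyad_n(A) x}_X = \|m_\epsi(A)x\|_X \leq C \|x\|_X$$
uniformly in $\epsi$, and taking expectation yields the upper bound.

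\emph{Lower bound via duality.} Pick a thickened partition $\tilde\dyad_n \in C^\infty_c(\R_+)$ with $\supp \tilde\dyad_n \subseteq [2^{n-2}, 2^{n+2}]$ and $\tilde\dyad_n \equiv 1$ on $\supp \dyad_n$, so that $\dyad_n(A) = \tilde\dyad_n(A)\dyad_n(A)$ by multiplicativity of the calculus. In the reflexive setting of the paper, $A^*$ inherits a bounded $\Hor^\alpha_2$ calculus on $X^*$ with comparable constants, so the upper bound applies to $X^*$ as well. Choose $x^* \in X^*$ with $\|x^*\| \leq 1$ and $\langle x^*, x\rangle \geq \|x\|/2$. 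Using $x = \sum_n \dyad_n(A) x$ (Lemma \ref{lem-Hormander-convergence-lemma}) and the orthogonality $\E[\epsi_n\epsi_m] = \delta_{n,m}$,
$$\langle x^*, x\rangle = \sum_n \langle \tilde\dyad_n(A)^* x^*, \dyad_n(A) x\rangle = \E\Bgnorm{}\left\langle \sum_n \epsi_n \tilde\dyad_n(A)^* x^*,\; \sum_m \epsi_m \dyad_m(A) x\right\rangle.$$
Cauchy--Schwarz in the probability space followed by Kahane's inequality and the upper bound applied on $X^*$ to $(\tilde\dyad_n)$ yield $|\langle x^*, x\rangle| \lesssim \|x^*\| \cdot \E\|\sum_m \epsi_m \dyad_m(A) x\|_X$. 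Taking the supremum over admissible $x^*$ gives the lower bound.

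\emph{Unconditional convergence.} On the calculus core $D_A = \{\phi(A) y : \phi \in C^\infty_c(\R_+),\: y \in X\}$, the series $\sum_n \dyad_n(A) x$ reduces to a finite sum (since $\dyad_n \phi \equiv 0$ for all but finitely many $n$), so convergence to $x$ is trivial and unconditional. The upper bound provides uniform control of arbitrarily signed partial sums, which combined with density of $D_A$ in $\overline{R(A)}$ extends unconditional convergence to every $x \in \overline{R(A)}$ via a standard Cauchy criterion.

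The main obstacle is the duality step: transferring the $\Hor^\alpha_2$ calculus from $A$ on $X$ to $A^*$ on $X^*$ with comparable bounds. This is automatic in the reflexive (and in particular UMD) setting of the paper, but in general requires a careful argument proceeding via duality of the underlying $\HI$ calculus and the abstract $\Hor^\alpha_2$ extension machinery of \cite{KrW3}.
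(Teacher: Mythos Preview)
Your argument is essentially correct and follows the standard route to Paley--Littlewood equivalences for operators with a good functional calculus: a uniform $\Hor^\alpha_2$ bound on the randomised multipliers $m_\epsi = \sum_n \epsi_n \dyad_n$ for the upper estimate, and duality (applying the same upper bound to $A^*$ with a thickened partition $\tilde\dyad_n$) for the lower one. The paper itself does not give a self-contained proof but simply invokes \cite[Theorem 4.1]{KrW2}, after observing that the $\Hor^\alpha_2$ calculus implies the $\mathcal{M}^\beta$ (Mihlin) calculus hypothesis required there; the argument in that reference proceeds along the same lines you sketch.

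Two minor points of presentation. First, your upper bound as written already presupposes convergence of the infinite signed series; it is cleaner to state the uniform bound for \emph{finite} sums $\sum_{n\in F}\epsi_n\dyad_n(A)x$ (which is what your $\Hor^\alpha_2$ argument actually proves), deduce unconditional convergence from this together with density of $D_A$, and only then pass to the full series. Second, in the duality step you pair with an arbitrary $x^*\in X^*$, not necessarily in $\overline{R(A^*)}$, so the series $\sum_n \epsi_n\tilde\dyad_n(A^*)x^*$ need not converge; working throughout with finite partial sums and taking a limit (or $\limsup$) at the end avoids this. Your identification of reflexivity as the crux for transferring the calculus to $A^*$ is correct and matches the standing assumption of the paper. (There is also a stray empty $\backslash$\texttt{Bgnorm\{\}} in your displayed equation.)
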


\begin{proof}
See \cite[Theorem 4.1]{KrW2}, together with the fact that the restriction of $A$ to $\overline{R(A)}$ is a strongly $0$-sectorial operator having a $\Hor^\alpha_2$ calculus, hence a $\mathcal{M}^\beta$ calculus \cite[Proposition 4.9]{KrPhD} needed in this reference.
\end{proof}

\section{The Maximal H\"ormander Functional Calculus Theorem}
\label{sec-main}

In this section, we state and prove the main results Theorems \ref{equ-intro-main-theorem} and \ref{thm-intro-main-C0}, and Corollary \ref{cor-intro-wave-Bochner-Riesz-maximal} from the introduction.
We start with the basic version in Theorem \ref{thm-main} below on spectral multipliers with compact support, and enhance it in several steps (see Corollaries \ref{cor-main-full-support} and \ref{cor-main-C0}, and Proposition \ref{prop-main-exp}) to more general classes of spectral multipliers, to be able to apply it in the important cases of wave operators and Bochner-Riesz means in Corollary \ref{cor-wave-Bochner-Riesz-maximal}.

\begin{thm}
\label{thm-main}
Let $Y$ be a UMD lattice, $1 < p < \infty$ and $(\Omega,\mu)$ a $\sigma$-finite measure space.
Let $\beta \geq 0$.
Let $A$ be a $0$-sectorial operator on $L^p(Y)$.
Assume that $A$ has a $\Hor^\alpha_2$ calculus on $L^p(Y)$ for some $\alpha > \frac12$.
Let $m \in W^{c}_2(\R)$ be a spectral multiplier with $\supp (m) \subseteq [\frac12,2]$, with
\[ c > \alpha + \max\left(\frac12,\frac{1}{\type L^p(Y)} - \frac{1}{\cotype L^p(Y)}\right) + \frac12 +
 \beta. \]
Then
\begin{equation}
\label{equ-1-thm-main}
\| t \mapsto m(tA)f\|_{L^p(Y(\Lambda^\beta_{2,2}(\R_+)))} \leq C \|m\|_{W^{c}_2(\R)} \|f\|_{L^p(Y)}.
\end{equation}
Moreover, let $(m_k)_{k \in \N}$ be a family of spectral multipliers in $W^c_2(\R)$ with $\supp(m_k) \subseteq [\frac12,2]$.
Then
\begin{equation}
\label{equ-2-thm-main}
\left\| \left( \sum_{k \in \N} \| t \mapsto m_k(tA) f_k \|_{\Lambda^\beta_{2,2}(\R_+)}^2 \right)^{\frac12} \right\|_{L^p(Y)} \leq C \sup_{k \in \N} \|m_k\|_{W^c_2(\R)} \left\| \left( \sum_{k \in \N} |f_k|^2 \right)^{\frac12} \right\|_{L^p(Y)} .
\end{equation}
\end{thm}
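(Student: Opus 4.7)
The strategy is to combine the Paley--Littlewood decomposition (Lemma \ref{lem-Hormander-calculus-Paley-Littlewood}) on the UMD lattice $L^p(Y(\Lambda^\beta))$ with the wave-operator representation of compactly supported spectral multipliers (Lemma \ref{lem-representation-formula-wave-operators}), and to exploit the $R$-boundedness of weighted wave operators from Lemma \ref{lem-R-bounded-wave-operators}. The preliminary step is to invoke Proposition \ref{prop-Hormander-calculus-to-R-Hormander-calculus} (applicable thanks to Pisier's property $(\alpha)$ of $L^p(Y)$, Lemma \ref{lem-property-alpha}) to self-improve the bounded $\Hor^\alpha_2$ calculus into an $R$-bounded $\Hor^\gamma_2$ calculus for some $\gamma > \alpha + \max\bigl(\tfrac12,\, \tfrac{1}{\type L^p(Y)} - \tfrac{1}{\cotype L^p(Y)}\bigr)$; all subsequent estimates run with $\gamma$ in place of $\alpha$, and the assumed threshold $c > \gamma + \frac12 + \beta$ emerges naturally at the end.

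To prove \eqref{equ-1-thm-main}, I would view $[t \mapsto m(tA)f]$ as an element of the UMD lattice $L^p(Y(\Lambda^\beta))$ (UMD by Lemma \ref{lem-UMD-lattice-Hilbert-extension}, since $\Lambda^\beta$ is Hilbert) on which the Hilbert-space tensor extension of $A$ still has a $\Hor^\gamma_2$ calculus. Lemma \ref{lem-Hormander-calculus-Paley-Littlewood} then yields
\[ \|t \mapsto m(tA)f\|_{L^p(Y(\Lambda^\beta))} \cong \E_\epsi \Bnorm{ t \mapsto \sum_{n \in \Z} \epsi_n \dyad_n(A) m(tA) f }_{L^p(Y(\Lambda^\beta))}. \]
For each scale $n$, the rescalings $\mu = 2^{-n}\lambda$ and $u = 2^n t$ transform $\dyad_n(A) m(tA)$ into $g_n^{(t)}(2^{-n} A)$ with $g_n^{(t)}(\mu) = \dyad_0(\mu) m(u\mu)$ supported in $\mu \in [\frac12, 2]$. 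Splitting off a resolvent by $g_n^{(t)} = h_n^{(t)} (1+\cdot)^{-\gamma}$ with $h_n^{(t)}(\mu) := \dyad_0(\mu)(1+\mu)^\gamma m(u\mu)$ and applying Lemma \ref{lem-representation-formula-wave-operators} to the operator $2^{-n}A$ produces
\[ \dyad_n(A) m(tA) f = \frac{1}{2\pi} \int_\R \widehat{h_n^{(t)}}(s) \, \langle s \rangle^\gamma \, \tau_{-n}(s) f \, ds, \]
where $\tau_{-n}(s) := \langle s \rangle^{-\gamma} (1+2^{-n}A)^{-\gamma} \exp(i s \, 2^{-n} A)$ is $R$-bounded uniformly in $(s,n) \in \R \times \Z$ by Lemma \ref{lem-R-bounded-wave-operators}.

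Setting $\Phi(u, s) := \int_0^\infty \dyad_0(\mu) (1+\mu)^\gamma m(u\mu) e^{-is\mu} d\mu$, so that $\widehat{h_n^{(t)}}(s) = \Phi(2^n t, s)$, I would substitute back, Minkowski-swap the $s$-integral with the $L^p(Y(\Lambda^\beta))$ norm, and insert a thickened dyadic factor $\tilde\dyad_n(A) f$ in front of $f$ (which leaves $\dyad_n(A) m(tA) f$ invariant). The Hilbert structure of $\Lambda^\beta$ is then used as follows: for each fixed $s$, Rademacher orthogonality combined with the dilation invariance $\|\Phi(2^n \cdot, s)\|_{\Lambda^\beta} = \|\Phi(\cdot, s)\|_{\Lambda^\beta}$ (Remark \ref{rem-Lambda-dilation-invariant}) collapses the randomized $\Lambda^\beta$-norm of $t \mapsto \sum_n \epsi_n \Phi(2^n t, s) \tau_{-n}(s) \tilde\dyad_n(A) f$ to $\|\Phi(\cdot, s)\|_{\Lambda^\beta}$ times the square function $(\sum_n |\tau_{-n}(s) \tilde\dyad_n(A) f|^2)^{1/2}$. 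Combining the Rademacher square-function equivalence (Lemma \ref{lem-UMD-lattice-Rademacher}), the $R$-bound on $\tau_{-n}(s)$, and Paley--Littlewood for $f$ itself bounds this square sum by $\|f\|_{L^p(Y)}$, uniformly in $s$. What remains is the scalar estimate
\[ \int_\R \langle s \rangle^\gamma \|\Phi(\cdot, s)\|_{\Lambda^\beta} \, ds \lesssim \|m\|_{W^c_2(\R)}, \]
which I would prove by Cauchy--Schwarz with weight $\langle s \rangle^{-(1+\varepsilon)}$ (absorbing an extra $\frac12 + \frac{\varepsilon}{2}$ into the Sobolev exponent), Plancherel in $s$, and a logarithmic change of variables in $u$; the compact support of $m$ together with Remark \ref{rem-Lambda-dilation-invariant} identifies $\|m \circ \exp\|_{W^c_2}$ with $\|m\|_{W^c_2}$ up to constants, yielding the final exponent $c = \gamma + \frac12 + \beta$.

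Statement \eqref{equ-2-thm-main} follows from the same scheme, viewing $(f_k)_k$ as an element of $L^p(Y(\ell^2))$ and $(m_k(tA) f_k)_k$ as an element of $L^p(Y(\ell^2(\Lambda^\beta)))$; Pisier's property $(\alpha)$ of $L^p(Y)$ is used to decouple the doubly-indexed Rademacher sum over dyadic scales $n \in \Z$ and family index $k$. The main obstacle I anticipate is the final scalar Sobolev computation, where one must carefully align the weighted Plancherel identity in $s$ with the $\Lambda^\beta$-norm in $u$ and absorb the substitution $\mu \leftrightarrow u\mu$, so that the three exponents ($\gamma$ from the wave-operator weight, $\frac12$ from Cauchy--Schwarz, and $\beta$ from $\Lambda^\beta$) combine into the single threshold $c > \gamma + \frac12 + \beta$ that matches the hypothesis.
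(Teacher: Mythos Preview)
Your proposal is correct and follows essentially the same route as the paper: Paley--Littlewood on $L^p(Y(\Lambda^\beta))$, the wave-operator representation (Lemma \ref{lem-representation-formula-wave-operators}), and the $R$-boundedness of the weighted waves from Lemma \ref{lem-R-bounded-wave-operators} after the self-improvement in Proposition \ref{prop-Hormander-calculus-to-R-Hormander-calculus}. The only tactical differences are that the paper introduces an explicit cutoff $\psi(t)$ and phrases the key step as estimating the $R$-bound of $\{t\mapsto \psi(t)m(2^n tA):n\in\Z\}$ directly (pulling Minkowski \emph{inside} the $\Lambda^\beta$-norm and bounding $\sup_s\|h_s\|_{\Lambda^\beta}$ by a case split $|s|\le 1$ vs.\ $|s|\ge 1$), whereas you Minkowski \emph{outside} first and plan a Cauchy--Schwarz/Plancherel argument for the scalar integral---both organizations yield the same exponent $c>\gamma+\tfrac12+\beta$.
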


\begin{proof}
We start with proving \eqref{equ-1-thm-main} and indicate the changements to prove \eqref{equ-2-thm-main} at the end.
Since $\Lambda^\beta = \Lambda^\beta_{2,2}(\R_+)$ is a Hilbert space, for any $n \in \Hor^\alpha_2$, the operator $n(A) \otimes \Id_{\Lambda^\beta}$ extends to a bounded operator on $L^p(Y(\Lambda^\beta))$ according to Lemma \ref{lem-UMD-lattice-Hilbert-extension}.
Thus, $A$ has a bounded $\Hor^\alpha_2$ calculus on $L^p(Y(\Lambda^\beta))$ and thus, satisfies the Paley-Littlewood decomposition from Lemma \ref{lem-Hormander-calculus-Paley-Littlewood} on $L^p(Y(\Lambda^\beta))$.
According to Proposition \ref{prop-Hormander-calculus-to-R-Hormander-calculus}, $A$ has then an $R$-bounded $\Hor^\gamma_2$ calculus on $L^p(Y(\Lambda^\beta))$, where $\gamma > \alpha + \frac12$ and
\[ \gamma > \alpha + \frac{1}{\type L^p(Y(\Lambda^\beta))} - \frac{1}{\cotype L^p(Y(\Lambda^\beta))} = \alpha + \frac{1}{\type L^p(Y)} - \frac{1}{\cotype L^p(Y)},\]
where in the second equality, we have used Lemma \ref{lem-UMD-lattice-Hilbert-extension}.
Also $A$ has an $R$-bounded $\Hor^\gamma_2$ calculus on $L^p(Y)$ and thus, according to Lemma \ref{lem-R-bounded-wave-operators}, the following operator family in $B(L^p(Y))$ 
\begin{equation}
\label{equ-1-proof-thm-main}
\left\{ \langle s \rangle^{-\gamma} (1 + 2^n A)^{-\gamma} \exp(i2^n s A) :\: n \in \Z \right\}
\end{equation}
is $R$-bounded with an $R$-bound independent of $s \in \R$.
We express for $f \in D_A \subseteq L^p(Y)$, where $D_A$ stands for the calculus core from Subsection \ref{subsec-abstract-Hormander}, and $(\dyad_n)_{n \in \Z}$ a dyadic partition of $\R_+$,
\begin{align*}
\MoveEqLeft
\| t \mapsto m(tA) f \|_{L^p(Y(\Lambda^\beta))} \cong \left\| \left( \sum_{n \in \Z} \| t \mapsto \dyad_n(A) m(tA) f \|_{\Lambda^\beta}^2 \right)^{\frac12} \right\|_{L^p(Y)} \\
& = \left\| \left( \sum_{n \in \Z} \| t \mapsto \dyad_n(A) m(2^{-n}tA) f\|_{\Lambda^\beta}^2 \right)^{\frac12} \right\|_{L^p(Y)} \\
& = \left\| \left( \sum_{n \in \Z} \| t \mapsto \psi(t) m(2^{-n}tA) \dyad_n(A) f \|_{\Lambda^\beta}^2 \right)^{\frac12} \right\|_{L^p(Y)} \\
& \lesssim R \left( \left\{ t \mapsto \psi(t) m(2^n t A) : \: n \in \Z \right\}_{L^p(Y) \to L^p(Y(\Lambda^\beta))} \right) \left\| \left( \sum_{n \in \Z} | \dyad_n(A) f |^2 \right)^{\frac12} \right\|_{L^p(Y)}\\
& \cong  R \left( \left\{ t \mapsto \psi(t) m(2^n t A) : \: n \in \Z \right\}_{L^p(Y) \to L^p(Y(\Lambda^\beta))} \right) \left\| f \right\|_{L^p(Y)}.
\end{align*}
Here, we have used the Paley-Littlewood decomposition in the space $L^p(Y(\Lambda^\beta))$ from Lemma \ref{lem-Hormander-calculus-Paley-Littlewood} in the first line and
the dilation invariance of the $\Lambda^\beta$ norm according to Remark \ref{rem-Lambda-dilation-invariant} in the second line.
Moreover, we have used that $\dyad_n(A)m(2^{-n}tA) f = 0$ for $t \not\in [2^{-2},2^2]$ and thus introduced a function $\psi \in C^\infty_c(\R_+)$ with support in $[2^{-3},2^3]$ and $\psi(t) = 1$ for $t \in [2^{-2},2^2]$ in the third line.
Finally, in the fourth line, we used $R$-boundedness together with the square function equivalence to Rademacher sums from Lemma \ref{lem-UMD-lattice-Rademacher} and in the fifth line, the Paley-Littlewood decomposition in the space $L^p(Y)$ from Lemma \ref{lem-Hormander-calculus-Paley-Littlewood}.

It remains to estimate the $R$-bound of $\left\{ t \mapsto \psi(t) m(2^n t A) : \: n \in \Z \right\}_{L^p(Y) \to L^p(Y(\Lambda^\beta))}$.
To this end, we write with Lemma \ref{lem-representation-formula-wave-operators}
\begin{align}
\psi(t) m(2^n t A) f & = \psi(t) m(2^n t A) \phi(2^n A) f  \nonumber \\
& = \frac{1}{2\pi} \int_\R \frac{1}{t} \hat{m}\left( \frac{s}{t} \right) \psi(t) (1 + 2^n A)^{-\gamma} \exp(i2^n s A) (1+2^n A)^\gamma \phi(2^n A) f ds , \label{equ-2-proof-thm-main}
\end{align}
where $\phi \in C^\infty_c(\R_+)$ with $\phi(s) = 1$ for $s \in \supp(m(t \cdot)) \subseteq [2^{-5},2^5]$ where $t \in \supp(\psi)$, so that $m(ts) = m(ts) \phi(s)$ for such $t$.
As $A$ has an $R$-bounded $\Hor^\gamma_2$ calculus, and $\| (1 + (\cdot))^\gamma \phi \|_{\Hor^\gamma_2} < \infty$, the set
\[ \left\{ (1 + 2^n A)^\gamma \phi(2^n A) : \: n \in \Z \right\}_{L^p(Y) \to L^p(Y)} \]
is $R$-bounded.
It remains to estimate the $R$-bound of the following family from $B(L^p(Y),L^p(Y(\Lambda^\beta)))$:
\begin{equation}
\label{equ-3-proof-thm-main}
\left\{ \frac{1}{2\pi} \int_\R \frac1t \psi(t) \hat{m}\left( \frac{s}{t} \right) \langle s \rangle^{\gamma + \delta} \langle s \rangle^{-(\gamma + \delta)} ( 1 + 2^n A)^{-\gamma} \exp(i2^n sA) ds :\: n \in \Z \right\} ,
\end{equation}
where we pick any $\delta > 1$.
We write $h_s(t) = \frac{1}{t} \hat{m} \left( \frac{s}{t} \right) \psi(t) \langle s \rangle^{\gamma + \delta}$ and $T_s^n = \frac{1}{2 \pi} \langle s \rangle^{-(\gamma + \delta)} (1 + 2^n A)^{-\gamma} \exp(i 2^n s A)$.
Let $f_1,\ldots,f_N \in L^p(Y)$.
We have
\begin{align*}
\E \left\| \sum_{k = 1}^N \epsilon_k \int_\R h_s(t) T_s^{n_k} ds f_k \right\|_{L^p(Y(\Lambda^\beta))} & \cong \left\| \left( \sum_k \left\| \int_\R h_s(t) T_s^{n_k} ds f_k \right\|_{\Lambda^\beta}^2 \right)^{\frac12} \right\|_{L^p(Y)} \\
& = \left\| \left( \sum_k \left\| \int_\R h_s(t) T_s^{n_k}(f_k) ds \right\|_{\Lambda^\beta}^2 \right)^{\frac12} \right\|_{L^p(Y)} \\
& \leq \left\| \left( \sum_k \left( \int_\R \|t \mapsto h_s(t)\|_{\Lambda^\beta} | T_s^{n_k}(f_k)| ds \right)^2 \right)^{\frac12} \right\|_{L^p(Y)} \\
& \leq \sup_{s \in \R} \|h_s \|_{\Lambda^\beta} \left\| \left( \sum_k \left( \int_\R |T_s^{n_k}f_k| ds \right)^2 \right)^{\frac12} \right\|_{L^p(Y)} \\
& \leq \sup_{s \in \R} \|h_s\|_{\Lambda^\beta} \int_\R \left\| \left( \sum_k |T_s^{n_k}f_k|^2  \right)^{\frac12} \right\|_{L^p(Y)} ds \\
& \leq \sup_{s \in \R} \|h_s\|_{\Lambda^\beta} \int_\R R\left(\{ T_s^n :\: n \in \Z \}\right) ds  \left\| \left( \sum_k |f_k|^2 \right)^{\frac12} \right\|_{L^p(Y)} \\
& \cong \sup_{s \in \R} \|h_s\|_{\Lambda^\beta} \int_\R R\left(\{ T_s^n :\: n \in \Z \}\right) ds \: \E \left\| \sum_k \epsilon_k f_k \right\|_{L^p(Y)}
\end{align*}
According to \eqref{equ-1-proof-thm-main} and since $\delta > 1$, we have $\int_\R R\left(\{ T_s^n :\: n \in \Z \}\right) ds < \infty$.
It thus only remains to estimate $\sup_{s \in \R} \| h_s \|_{\Lambda^\beta}$.
According to Remark \ref{rem-Lambda-dilation-invariant}, we have with $\xi(t) = t \psi\left( \frac1t \right) \in C^\infty_c(\R_+)$
\[ \| h_s \|_{\Lambda^\beta} = \| \frac{1}{t} \psi(t) \hat{m}\left(\frac{s}{t}\right) \|_{\Lambda^\beta} \langle s \rangle^{\gamma + \delta} = \| \xi(t) \hat{m}(st) \|_{\Lambda^\beta_{2,2}(\R_+)} \langle s \rangle^{\gamma + \delta} \cong \| \xi(t) \hat{m}(st) \|_{W^\beta_2(\R)} \langle s \rangle^{\gamma + \delta}. \]
We start by estimating the $L^2(\R)$ norm.
For $|s| \leq 1$, we have
\[ \|\xi(t) \hat{m}(st)\|_{L^2(\R)}^2 \langle s \rangle^{2(\gamma + \delta)} \lesssim \int_{1/8}^8 |\hat{m}(st)|^2 dt \lesssim \|\hat{m}\|_{L^\infty(\R)}^2 \lesssim \|m\|_{L^1(\R)}^2 \lesssim \|m\|_{L^2(\R)}^2, \]
where we have used that $m$ has compact support in $[\frac12,2]$ in the last estimate.
For $|s| \geq 1$, we have
\begin{align*}
\| \xi(t) \hat{m}(st)\|_{L^2(\R)}^2 \langle s \rangle^{2(\gamma + \delta)} & \lesssim \int_{1/8}^8 |\hat{m}(st)|^2 s^{2(\gamma + \delta - \frac12)} |s| dt \cong \int_{1/8}^8 \left| \hat{m}(st) (st)^{\gamma + \delta - \frac12} \right|^2 |s| dt \\
& = \int_{1/8 s}^{8 s} \left|\hat{m}(t) t^{\gamma + \delta - \frac12}\right|^2 dt \lesssim \|m\|_{W^{\gamma + \delta - \frac12}_2(\R)}^2 \lesssim \|m\|_{W^{\gamma + \delta - \frac12 + \beta}_2(\R)}^2 .
\end{align*}
By a density argument, we can assume that $m \in C^\infty_c(\R)$.
Then it suffices to estimate the $L^2(\R)$ norm of the $\beta$-derivative (defined via its Fourier multiplier).
For $|s| \leq 1$, we have
\begin{align*}
\|\partial^\beta (\xi(t) \hat{m}(st)) \|_{L^2(\R)}^2 \langle s \rangle^{2(\gamma + \delta)} & \lesssim \int_{\frac18}^8 |\partial^\beta \left(\hat{m}(st)\right)|^2 dt = \int_{\frac18}^8 |\partial^\beta(\hat{m})(st)s^\beta|^2 dt \\
& \lesssim \|\partial^\beta(\hat{m})\|_{L^\infty(\R)}^2 \lesssim \|(\cdot)^\beta m\|_{L^1(\R)}^2 \\
& \lesssim \|m\|_{L^2(\R)}^2,
\end{align*}
where we have used that $m$ has compact support in $[\frac12,2]$ in the last estimate.
Finally, for $|s| \geq 1$, we have
\begin{align*}
\MoveEqLeft
\| \partial^\beta(\xi(t) \hat{m}(st)) \|_{L^2(\R)}^2 \langle s \rangle^{2(\gamma + \delta)} \lesssim \int_{\frac18}^8 \left| \partial^\beta\left(\hat{m}(st)\right) s^{\gamma + \delta - \frac12}\right|^2 |s| dt \\
& \cong \int_{1/8}^8 \left| \partial^\beta (\hat{m})(st) (st)^{\gamma+\delta - \frac12 +\beta} \right|^2 |s| dt
= \int_{1/8s}^{8s} \left| \partial^\beta (\hat{m})(t) t^{\gamma + \delta - \frac12 + \beta} \right|^2 dt \\
& \lesssim \| (\cdot)^\beta m\|_{W^{\gamma+\delta - \frac12 +\beta}_2(\R)}^2 \lesssim \|m\|_{W^{\gamma+\delta-\frac12+\beta}_2(\R)}^2.
\end{align*}
Resuming the four estimates above, we deduce
\[ \sup_{s \in \R} \|h_s\|_{\Lambda^\beta} \lesssim \|m\|_{W^{\gamma+\delta-\frac12+\beta}_2(\R)} , \]
and \eqref{equ-1-thm-main} is proved.

We indicate now how to show the square function estimate \eqref{equ-2-thm-main}.
Since $L^p(Y)$ has the $\sigma$-Levi and $\sigma$-Fatou properties according to Lemma \ref{lem-UMD-lattice-Fatou}, we can restrict to a finite family of spectral multipliers $\{m_1,\ldots,m_N\}$.
According to Lemma \ref{lem-property-alpha}, $L^p(Y(\Lambda^\beta))$ has property $(\alpha)$.
Thus, we have with $\epsilon_n'$ and $\epsilon_k$ two independent sequences of Rademachers,
\begin{align*}
\left\| \left( \sum_k \|t \mapsto m_k(tA) f_k \|_{\Lambda^\beta}^2 \right)^{\frac12} \right\|_{L^p(Y)} & \cong \E \left\| \sum_k \epsilon_k (t \mapsto m_k(tA) f_k) \right\|_{L^p(Y(\Lambda^\beta))} \\
& \cong \E \E' \left\| \sum_{n \in \Z} \sum_k \epsilon_k \epsilon_n' (t \mapsto m_k(tA) \dyad_n(A) f_k) \right\|_{L^p(Y(\Lambda^\beta))} \\
& \cong \left\| \left( \sum_{n \in \Z} \sum_k \|t \mapsto m_k(tA) \dyad_n(A) f_k \|_{\Lambda^\beta}^2 \right)^{\frac12} \right\|_{L^p(Y)},
\end{align*}
where we have used the Paley-Littlewood equivalence from Lemma \ref{lem-Hormander-calculus-Paley-Littlewood} on $L^p(Y(\Lambda^\beta))$ in the second equivalence, for fixed choices of signs $\epsilon_k$.
Then, similarly to the proof of \eqref{equ-1-thm-main}, we have
\begin{align*}
\left\| \left( \sum_k \| t \mapsto m_k(tA) f_k\|_{\Lambda^\beta}^2 \right)^{\frac12} \right\|_{L^p(Y)} & \leq R\left(\left\{ t \mapsto \psi(t)m_k(2^ntA) : \: k,n \right\} \right) \left\| \left( \sum_{n \in \Z} \sum_k | \dyad_n(A) f_k|^2 \right)^{\frac12} \right\|_{L^p(Y)} \\
& \cong R\left(\left\{ t \mapsto \psi(t)m_k(2^ntA) : \: k,n \right\} \right) \left\| \left( \sum_k | f_k |^2 \right)^{\frac12} \right\|_{L^p(Y)},
\end{align*}
where we have used the Paley-Littlewood equivalence from Lemma \ref{lem-Hormander-calculus-Paley-Littlewood} on the space $L^p(Y(\ell^2_N))$.
Here, $\psi$ is the same function as in the proof of \eqref{equ-1-thm-main}.
Thus we are reduced to show the $R$-boundedness of the family
\[ \left\{ t \mapsto \psi(t) m_k(2^{n}tA) : \: k \in \{1,\ldots,N\}, n \in \Z \right\}_{L^p(Y) \to L^p(Y(\Lambda^\beta))} . \]
Again similarly to the proof of \eqref{equ-1-thm-main}, we put 
\[ h_s^j(t) = \frac{1}{t} \psi(t) \hat{m}_{k_j}\left( \frac{s}{t} \right) \langle s \rangle^{\gamma + \delta} \]
and
\[ T_s^j = \frac{1}{2\pi} \langle s \rangle^{-(\gamma + \delta)} ( 1 + 2^{n_j} A)^{-\gamma} \exp(i2^{n_j}sA) . \]
Then the same calculation as in the proof of \eqref{equ-1-thm-main} yields
\[ \E \left\| \sum_{j} \epsilon_j \int_\R h_s^j(t) T_s^{n_j} ds f_j \right\|_{L^p(Y(\Lambda^\beta))} \leq \max_j \sup_{s \in \R} \|h_s^j\|_{\Lambda^\beta} \int_\R R \left( \left\{ T_s^n : \: n \in \Z \right\} \right) ds \E \left\| \sum_j \epsilon_j f_j \right\|_{L^p(Y)}. \]
But then we can copy from above that
\[ \max_j \sup_{s \in \R} \|h_s^j\|_{\Lambda^\beta} \lesssim \max_k \|m_k\|_{W^c_2(\R)} .\]
We have concluded the proof of \eqref{equ-2-thm-main}.
\end{proof}

\begin{remark}
\label{rem-main-measurability}
In Theorem \ref{thm-main} above, one might wonder whether $t \mapsto m(tA)f$ belongs \emph{a priori} to $L^p(Y(\Lambda^\beta))$.
That is, whether $t\mapsto m(tA)f(x,\omega)$ belongs to $\Lambda^\beta$ for a.e. $(x,\omega) \in \Omega \times \Omega'$, $(t,x,\omega) \mapsto m(tA)f(x,\omega)$ is strongly measurable $\R_+ \times \Omega \times \Omega' \to \C$ and $(x,\omega) \mapsto \norm{m(tA)f(x,\omega)}_{\Lambda^\beta}$ belongs to $L^p(Y)$.

That this is indeed the case can be seen with the following reasoning.
First note that for any $f \in L^p(Y)$, the function $\R_+ \to L^p(Y)$, $t \mapsto m(tA)f$ is continuous.
Since for every $B \subseteq \Omega \times \Omega'$ of finite measure, $L^p(\Omega,Y(\Omega')) \to L^1(B), \: g \mapsto g 1_B$ is continuous,
we obtain $\R_+ \to L^1(B)$, $t \mapsto m(tA)f 1_B$ continuous.
Exhausting $\Omega \times \Omega'$ by a sequence of such $B$, according to \cite[Proposition 1.2.25]{HvNVW}, we deduce that $\R_+ \times \Omega \times \Omega' \to \C, \: (t,x,\omega) \mapsto m(tA)f(x,\omega)$ is a strongly measurable function (by choosing for each $t \in \R_+$ the right representative of $m(tA)f(x,\omega)$).

In the following, we show that this strongly measurable function coincides a.e. with an element of $L^p(Y(\Lambda^\beta))$, thus finishing the proof.
Take first, as in the proof of Theorem \ref{thm-main}, $f \in D_A \subseteq L^p(Y)$.
Thus, there exists a $\phi \in C^\infty_c(\R_+)$ such that $f = \phi(A)f$, and consequently, ruling out the supports as already done in the proof of Theorem \ref{thm-main}, there exists $\psi \in C^\infty_c(\R_+)$ such that $m(tA)f=m(tA)\phi(A)f = \psi(t) m(tA)\phi(A)f = \psi(t)m(tA)f$.
Then the representation formula from \ref{lem-representation-formula-wave-operators} can be written as
\begin{equation}
\label{equ-1-rem-main-measurability}
\psi(t)m(tA)f = \frac{1}{2\pi} \int_\R \frac{1}{t} \psi(t) \hat{m}\left(\frac{s}{t}\right) \exp(isA)f ds = \int_\R h_s(t) f_s ds ,
\end{equation}
where $h_s(t) = \frac{1}{2 \pi} \frac{1}{t} \psi(t) \hat{m}\left(\frac{s}{t} \right)$ satisfies, as in the proof of Theorem \ref{thm-main}, $\|h_s\|_{\Lambda^\beta} \langle s \rangle^{\gamma + \delta} \lesssim 1$ and $f_s = \exp(isA)f$ satisfies $\int_\R \langle s \rangle^{-\gamma - \delta} \|f_s\|_{L^p(Y)} ds < \infty$.
Therefore,
\[ \int_\R \|h_s\|_{\Lambda^\beta} \|f_s\|_{L^p(Y)} ds < \infty .\]
Moreover, one checks elementarily that $s \mapsto h_s$ is continuous $\R \to \Lambda^\beta$ and moreover, $s \mapsto f_s$ is differentiable, hence continuous $\R \to L^p(Y)$.
Then the map $s \mapsto h_s \otimes f_s$ is continuous $\R \to L^p(Y(\Lambda^\beta))$ and therefore, the integral in \eqref{equ-1-rem-main-measurability} is a Bochner integral in the space $L^p(Y(\Lambda^\beta))$.
Denote its value in $L^p(Y(\Lambda^\beta))$ by $X$.
Consider the operator
\[ I_{a,b} : \: \begin{cases} \Lambda^\beta & \to \C \\ \xi & \mapsto \int_a^b \xi(t) dt \end{cases} \]
where $0 < a < b < \infty$.
Then $I_{a,b}$ is continuous, so $\Id_{L^p(Y)} \otimes I_{a,b} : L^p(Y(\Lambda^\beta)) \to L^p(Y)$ is also continuous.
We infer that
\begin{align*}
\MoveEqLeft
\Id_{L^p(Y)} \ot I_{a,b}(m(tA)f) = \int_a^b \int_\R h_s(t) f_s ds dt \overset{\text{Fubini}}= \int_\R \int_a^b h_s(t) dt f_s ds \\
& = \int_\R I_{a,b}(h_s) f_s ds = \int_\R \Id_{L^p(Y)} \ot I_{a,b}(h_s \ot f_s) ds \\
& = \Id_{L^p(Y)} \ot I_{a,b} \int_\R h_s \ot f_s ds = \Id_{L^p(Y)} \ot I_{a,b}(X)
\end{align*}
Here we used in the penultimate step that $\int_\R h_s \ot f_s ds$ is a Bochner integral in $L^p(Y(\Lambda^\beta))$ so that the bounded operator $\Id_{L^p(Y)} \ot I_{a,b}$ can be swapped with the integral.
We infer by \cite[Proposition 1.2.14]{HvNVW} that for a.e. $t > 0$, $m(tA)f = X(t,\cdot,\cdot)$ as equality in $L^p(Y)$.
Thus, for a.e. $t > 0$ and a.e. $(x,\omega), \: m(tA)f(x,\omega) = X(t,x,\omega)$ as equality in $\C$, and by a Fubini argument this yields equality for a.e. $(t,x,\omega) \in \R_+ \times \Omega \times \Omega'$ as we wanted.
So we can conclude that $t \mapsto m(tA)f$ indeed defines an element in $L^p(Y(\Lambda^\beta))$.

If $f \in L^p(Y)$ is arbitrary, then take by density of $D_A$ a sequence $(f_n)_n \subseteq D_A$ converging to $f$ in $L^p(Y)$.
We have $m(tA)f = \lim_n m(tA) f_n$ in $L^p(Y)$ for any $t > 0$.
Moreover, Theorem \ref{thm-main} for $f \in D_A$ yields that $t \mapsto m(tA) f_n$ is a Cauchy sequence in $L^p(Y(\Lambda^\beta))$, with limit, say, $X \in L^p(Y(\Lambda^\beta))$.
Consider again $I_{a,b}$ and argue similarly as above to deduce that $m(tA)f(x,\omega)$ equals a.e. on $\R_+ \times \Omega \times \Omega'$ with an element in $L^p(Y(\Lambda^\beta))$.

If $\beta > \frac12$, then $\Lambda^\beta$ consists of continuous functions.
Then replacing $I_{a,b}$ by $\delta_t : \Lambda^\beta \to \C, \: \xi \mapsto \xi(t)$, and arguing as above, we obtain that $m(tA)f(x,\omega) = X(t,x,\omega)$ for a.e. $(x,\omega)$.
Choosing then the representative $X(t,\cdot,\cdot)$ of $m(tA)f$ for each fixed $t > 0$, the function $t \mapsto m(tA)f(x,\omega)$ becomes continuous for a.e. $(x,\omega)$.
In the sequel, we shall tacitly always choose this representative.
\end{remark}

Using the dilation invariance structure of the $\Lambda^\beta_{2,2}(\R_+)$ norm, we can easily generalise Theorem \ref{thm-main} in the following corollary, where the compact support condition on the spectral multiplier $m$ is replaced by a summability condition of norms of dilates of $m$.

\begin{cor}
\label{cor-main-full-support}
Let $Y$ be a UMD lattice, $1 < p < \infty$ and $(\Omega,\mu)$ a $\sigma$-finite measure space.
Let $\beta \geq 0$.
Let $A$ be a $0$-sectorial operator on $L^p(Y)$.
Assume that $A$ has a $\Hor^\alpha_2$ calculus on $L^p(Y)$.
Pick as in Theorem \ref{thm-main}
\[ c > \alpha + \max \left( \frac12, \frac{1}{\type L^p(Y)} - \frac{1}{\cotype L^p(Y)} \right) + \frac12 + \beta . \]
Let $m$ be a spectral multiplier with $m(0) = 0$ such that for some dyadic partition $(\dyad_n)_{n \in \Z}$ of $\R_+$, we have $\sum_{n \in \Z} \|m(2^n \cdot) \dyad_0\|_{W^c_2(\R)} < \infty$.
Then
\begin{equation}
\label{equ-1-cor-main-full-support}
\| t \mapsto m(tA)f\|_{L^p(Y(\Lambda^\beta_{2,2}(\R_+)))} \leq C \sum_{n \in \Z} \|m(2^n \cdot) \dyad_0\|_{W^c_2(\R)} \|f\|_{L^p(Y)} .
\end{equation}
Moreover, let $(m_k)_{k \in \N}$ be a sequence of spectral multipliers with $m_k(0) = 0$ such that for any $k \in \N$, $\sum_{n \in \Z} \|m_k(2^n \cdot) \dyad_0\|_{W^c_2(\R)} < \infty$.
Let $(\omega^k(l))_{l \in \N}$ be the non-increasing rearrangement of the sequence $(\|m_k(2^n \cdot) \dyad_0\|_{W^c_2(\R)})_{n \in \Z}$.
If we have $\sum_{l \in \N} \sup_k \omega^k(l) < \infty$, then 
\begin{equation}
\label{equ-2-cor-main-full-support}
\left\| \left( \sum_k \| t \mapsto m_k(tA) f_k \|_{\Lambda^\beta_{2,2}(\R_+)}^2 \right)^{\frac12} \right\|_{L^p(Y)} \leq C \sum_{l \in \N} \sup_k \omega^k(l) \left\| \left( \sum_k | f_k |^2 \right)^{\frac12} \right\|_{L^p(Y)} .
\end{equation}
\end{cor}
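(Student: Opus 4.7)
The plan is to reduce both \eqref{equ-1-cor-main-full-support} and \eqref{equ-2-cor-main-full-support} to Theorem~\ref{thm-main} by a dyadic decomposition, exploiting in an essential way the dilation invariance of the $\Lambda^\beta_{2,2}$-norm (Remark~\ref{rem-Lambda-dilation-invariant}).

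For \eqref{equ-1-cor-main-full-support}, first note that since $m(0) = 0$, the splitting $L^p(Y) = \overline{R(A)} \oplus N(A)$ (available by reflexivity) gives $m(A)f = m(A)(I - P_{N(A)})f$, so we may assume $f \in \overline{R(A)}$.  Given the dyadic partition $(\dyad_n)_{n \in \Z}$, we set $m_n := m \cdot \dyad_n$ and $\tilde m_n(\lambda) := m(2^n \lambda) \dyad_0(\lambda)$, so that $\supp \tilde m_n \subseteq [\tfrac12,2]$ and $m_n(tA) = \tilde m_n(2^{-n}tA)$.  The dilation invariance of $\Lambda^\beta$ together with Theorem~\ref{thm-main} then yields
\[
\| t \mapsto m_n(tA)f \|_{L^p(Y(\Lambda^\beta))} = \| t \mapsto \tilde m_n(tA)f\|_{L^p(Y(\Lambda^\beta))} \leq C \|\tilde m_n\|_{W^c_2(\R)} \|f\|_{L^p(Y)},
\]
and the claim follows by summing over $n \in \Z$ via the triangle inequality, provided we justify that $\sum_n m_n(tA)f = m(tA)f$ in the relevant sense.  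This convergence comes from Lemma~\ref{lem-Hormander-convergence-lemma} applied to $m(tA)f \in \overline{R(A)}$, combined with the fact that $m(tA)\dyad_n(A) = m_n(tA)$ on the calculus core (and then extended by density).

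For \eqref{equ-2-cor-main-full-support}, the idea is to exploit the liberty of rearrangement of the dyadic pieces, indexed separately for each $k$.  For each $k$, pick a bijection $\sigma_k : \N \to \Z$ realizing the non-increasing rearrangement, i.e.\ $\omega^k(l) = \|m_k(2^{\sigma_k(l)}\cdot)\dyad_0\|_{W^c_2(\R)}$, and define $\tilde m_k^{(l)}(\lambda) := m_k(2^{\sigma_k(l)}\lambda)\dyad_0(\lambda)$, so that $\supp \tilde m_k^{(l)} \subseteq [\tfrac12,2]$ and $\|\tilde m_k^{(l)}\|_{W^c_2(\R)} = \omega^k(l)$.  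Dilation invariance again gives, for each fixed $l$,
\[
\| t \mapsto (m_k \dyad_{\sigma_k(l)})(tA) f_k\|_{\Lambda^\beta} = \| t \mapsto \tilde m_k^{(l)}(tA) f_k\|_{\Lambda^\beta},
\]
so the square function estimate \eqref{equ-2-thm-main} in Theorem~\ref{thm-main} applied to the family $(\tilde m_k^{(l)})_k$ delivers
\[
\left\| \left( \sum_k \| t \mapsto (m_k\dyad_{\sigma_k(l)})(tA) f_k\|_{\Lambda^\beta}^2 \right)^{\!\tfrac12} \right\|_{L^p(Y)} \leq C \sup_k \omega^k(l) \left\| \left(\sum_k |f_k|^2\right)^{\!\tfrac12} \right\|_{L^p(Y)}.
\]
Finally, expanding $m_k(tA)f_k = \sum_{l \in \N} (m_k \dyad_{\sigma_k(l)})(tA) f_k$ (which converges as in the previous paragraph), taking $\Lambda^\beta$-norms, applying Minkowski's inequality in $\ell^2_k$ and then the $L^p(Y)$-norm, and summing the previous display over $l \in \N$ produces \eqref{equ-2-cor-main-full-support}.

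The main conceptual step is the rearrangement argument in the square function version: without the freedom to choose a different $\sigma_k$ for each $k$, one would only be able to bound by $\sum_n \sup_k \|m_k(2^n\cdot)\dyad_0\|_{W^c_2(\R)}$, which is strictly larger than $\sum_l \sup_k \omega^k(l)$.  The routine but slightly delicate verification will be to make sure the series $\sum_l (m_k \dyad_{\sigma_k(l)})(tA) f_k$ converges in $L^p(Y(\Lambda^\beta))$ uniformly enough to justify both the pointwise-in-$k$ expansion and the Minkowski swap; this reuses the same absolute convergence provided by the finiteness of $\sum_l \sup_k \omega^k(l)$ together with Theorem~\ref{thm-main}.
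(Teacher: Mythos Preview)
Your approach is correct and matches the paper's proof exactly: dyadic decomposition $m = \sum_{n} m\dyad_n$, dilation invariance of $\Lambda^\beta$ to reduce each piece to a multiplier supported in $[\tfrac12,2]$, application of Theorem~\ref{thm-main}, and in the square function part the same rearrangement bijections $\sigma_k$ followed by summation over $l$. One slip to correct: you write $m(tA)\dyad_n(A) = m_n(tA)$, but since $m_n = m\dyad_n$ this should read $m(tA)\dyad_n(tA) = m_n(tA)$; the convergence $\sum_n m_n(tA)f = m(tA)f$ then follows from $\sum_{n \in \Z}\dyad_n \equiv 1$ on $\R_+$ together with Lemma~\ref{lem-Hormander-convergence-lemma} (applied to the dilated partition, equivalently to $tA$), exactly as the paper does.
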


\begin{proof}
Let us start with the proof of \eqref{equ-1-cor-main-full-support}.
We decompose, using the dilation invariance of the $\Lambda^\beta$ norm and then Theorem \ref{thm-main} for the function $(m \dyad_n)(2^{n}\cdot)$ with support in $[\frac12,2]$,
\begin{align*}
\| t \mapsto m(tA)f \|_{L^p(Y(\Lambda^\beta))} & = \left\| \sum_{n \in \Z} t \mapsto m(tA) \dyad_n(tA) f \right\|_{L^p(Y(\Lambda^\beta))} \\
& \leq \sum_{n \in \Z} \|t \mapsto m(tA) \dyad_n(tA)f\|_{L^p(Y(\Lambda^\beta))} \\
& = \sum_{n \in \Z} \| t \mapsto (m\dyad_n)(2^{n}tA) f\|_{L^p(Y(\Lambda^\beta))} \\
& \lesssim \sum_{n \in \Z} \|(m\dyad_n)(2^{n} \cdot)\|_{W^c_2(\R)} \|f\|_{L^p(Y)} \\
& = \sum_{n \in \Z} \|m(2^n \cdot) \dyad_0\|_{W^c_2(\R)} \|f\|_{L^p(Y)} .
\end{align*}
Here in the first equality we used $\sum_{n \in \Z} \dyad_n(t) = 1 \: (t > 0)$ together with Lemma \ref{lem-Hormander-convergence-lemma}.
For the proof of \eqref{equ-2-cor-main-full-support}, let for $k \in \N$, $n^k_{(\cdot)} : \N \to \Z$ be the bijection corresponding to the non-increasing rearrangement of $(\|m_k(2^n \cdot) \dyad_0\|_{W^c_2(\R)})_{n \in \Z}$.
Then according to \eqref{equ-2-thm-main},
\begin{align*}
\left\| \left( \sum_k \| t \mapsto m_k(tA) f_k \|_{\Lambda^\beta}^2 \right)^{\frac12} \right\|_{L^p(Y)} & = \left\| \left( \sum_k \left\| \sum_{l \in \N} t \mapsto m_k(tA) \dyad_{n^k_l}(tA) f_k \right\|_{\Lambda^\beta}^2 \right)^{\frac12} \right\|_{L^p(Y)} \\
& \leq \sum_{l \in \N} \left\| \left( \sum_k \left\| t \mapsto m_k(tA) \dyad_{n^k_l}(tA) f_k \right\|_{\Lambda^\beta}^2 \right)^{\frac12} \right\|_{L^p(Y)} \\
& = \sum_{l \in \N} \left\| \left( \sum_k \left\| t \mapsto m_k(2^{n^k_l}tA) \dyad_0(tA) f_k \right\|_{\Lambda^\beta}^2 \right)^{\frac12} \right\|_{L^p(Y)} \\
& \lesssim \sum_{l \in \N} \sup_k \|m_k(2^{n^k_l} \cdot) \dyad_0 \|_{W^c_2(\R)} \left\| \left( \sum_k |f_k|^2 \right)^{\frac12} \right\|_{L^p(Y)} .
\end{align*}
\end{proof}

\begin{remark}
In the setting of Corollary \ref{cor-main-full-support}, similarly to Remark \ref{rem-main-measurability}, we note that $t \mapsto m(tA)f$ belongs \emph{a priori} to $L^p(Y(\Lambda^\beta))$ thanks to absolute convergence of $\sum_{n \in \Z} m(tA) \dyad_n(tA)f$ in $L^p(Y(\Lambda^\beta))$ by assumption of Corollary \ref{cor-main-full-support}, and convergence of that series for fixed $t > 0$, in $L^p(Y)$, to $m(tA)f$, according to Lemma \ref{lem-Hormander-convergence-lemma}.
Indeed, use again $\Id_{L^p(Y)} \otimes I_{a,b}$ as in the proof of Remark \ref{rem-main-measurability}.
\end{remark}

In the next corollary, we break down the $\Lambda^\beta$ norm from Corollary \ref{cor-main-full-support} to the more classical supremum norm.
Note that $\sup_{t > 0} |m(tA)f(x,\omega)| = \sup_{t > 0, \: t \in \mathbb Q} |m(tA)f(x,\omega)|$ is measurable as a supremum of countably many measurable functions, where equality follows from the fact that $t \mapsto m(tA)f(x,\omega)$ belongs to $\Lambda^\beta$ and hence is continuous, for a.e. $(x,\omega)$.

\begin{cor}
\label{cor-main-C0}
Let $Y$ be a UMD lattice, $1 < p < \infty$ and $(\Omega,\mu)$ a $\sigma$-finite measure space.
Let $A$ be a $0$-sectorial operator on $L^p(Y)$.
Assume that $A$ has a $\Hor^\alpha_2$ calculus on $L^p(Y)$.
Choose some
\[ c > \alpha + \max \left( \frac12, \frac{1}{\type L^p(Y)} - \frac{1}{\cotype L^p(Y)} \right) + 1 . \]
Let $m$ be a spectral multiplier such that for some dyadic partition $(\dyad_n)_{n \in \Z}$ of $\R_+$, we have $\sum_{n \in \Z} \|m(2^n \cdot) \dyad_0\|_{W^c_2(\R)} < \infty$.
Then $t \mapsto m|_{(0,\infty)}(tA)f$ belongs to $L^p(Y(C_0(\R_+)))$ and
\[ \| \sup_{t > 0} |m(tA)f| \|_{L^p(Y)} \leq C \left( |m(0)| + \sum_{n \in \Z} \|m(2^n \cdot) \dyad_0\|_{W^c_2(\R)} \right) \|f\|_{L^p(Y)} . \]
Moreover, let $(m_k)_{k \in \N}$ be a sequence of spectral multipliers such that $\sup_k |m_k(0)| < \infty$ and $\sum_{l \in \N} \sup_k \omega^k(l) < \infty$, where the $\omega^k$ are the non-increasing rearrangements of $(\|m_k(2^n \cdot)\dyad_0\|_{W^c_2(\R)})_{n \in \Z}$ as in Corollary \ref{cor-main-full-support}.
Then
\[ \left\| \left( \sum_k \sup_{t > 0} |m_k(tA)f_k|^2 \right)^{\frac12} \right\|_{L^p(Y)} \leq C \left( \sup_k |m_k(0)| + \sum_{l \in \N} \sup_k \omega^k(l) \right) \left\| \left( \sum_k |f_k|^2 \right)^{\frac12} \right\|_{L^p(Y)} . \]
\end{cor}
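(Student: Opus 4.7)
The plan is to derive Corollary \ref{cor-main-C0} from Corollary \ref{cor-main-full-support} via the Sobolev-type embedding $\Lambda^\beta_{2,2}(\R_+) \hookrightarrow C_0(\R_+)$ valid for $\beta > \tfrac12$ (see Definition \ref{def-Lambda}), after separating out the zero-frequency contribution attached to $m(0)$.

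First I would exploit the block-diagonal form of the functional calculus: since $1_{\{0\}} \in \Hor^\alpha_2$, the projection $P_{N(A)}$ is bounded on $L^p(Y)$, and we can write $m(tA)f = m(0) P_{N(A)} f + m(tA_0) P_{\overline{R(A)}} f$, where $A_0$ denotes the injective part of $A$ on $\overline{R(A)}$. The constant-in-$t$ summand contributes $|m(0)|\,|P_{N(A)}f|$ to the pointwise supremum, with $L^p(Y)$-norm dominated by $|m(0)|\,\|f\|_{L^p(Y)}$. For the $t$-dependent summand, I pick $\beta > \tfrac12$ sufficiently close to $\tfrac12$ that the Corollary \ref{cor-main-full-support} exponent condition $c > \alpha + \max(\tfrac12, \tfrac{1}{\type L^p(Y)} - \tfrac{1}{\cotype L^p(Y)}) + \tfrac12 + \beta$ still holds --- this is possible precisely because $c$ is strictly larger than $\alpha + \max(\cdots) + 1$. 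Applying that corollary to $P_{\overline{R(A)}}f$ (on which the restriction $m(0)=0$ is harmless, since $A_0$ is injective so $\sum_n \dyad_n(tA_0) = \mathrm{Id}$ reconstructs $m(tA_0)$ from the pieces $m\dyad_n$ regardless of $m(0)$) yields
\[ \|t \mapsto m(tA) P_{\overline{R(A)}} f\|_{L^p(Y(\Lambda^\beta))} \lesssim \sum_{n \in \Z} \|m(2^n \cdot)\dyad_0\|_{W^c_2(\R)}\,\|f\|_{L^p(Y)}. \]
The embedding $\Lambda^\beta \hookrightarrow C_0(\R_+)$, applied pointwise in $(x,\omega)$ inside the lattice $Y$, then produces $\|\sup_{t > 0}|m(tA) P_{\overline{R(A)}} f|\,\|_{L^p(Y)} \lesssim \|t \mapsto m(tA) P_{\overline{R(A)}} f\|_{L^p(Y(\Lambda^\beta))}$. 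Measurability of the supremum is fine because the trajectory $t \mapsto m(tA)f(x,\omega)$ has a continuous representative by Remark \ref{rem-main-measurability}, so the supremum coincides with one over a countable dense set. Adding the two contributions via the triangle inequality proves the first assertion, and by construction $m|_{(0,\infty)}(tA)f$ (i.e., dropping the $m(0)P_{N(A)}f$ piece) lies in $L^p(Y(C_0(\R_+)))$.

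For the square function version, I would replicate the same nullspace/range splitting termwise on each $(m_k,f_k)$. The constant part contributes $\sup_k |m_k(0)| \cdot \|(\sum_k |P_{N(A)} f_k|^2)^{1/2}\|_{L^p(Y)}$, dominated by $\sup_k |m_k(0)|\, \|(\sum_k |f_k|^2)^{1/2}\|_{L^p(Y)}$ once one observes that $P_{N(A)}$ extends boundedly to $L^p(Y(\ell^2))$ via the Hilbert-tensor extension Lemma \ref{lem-UMD-lattice-Hilbert-extension}. The $t$-dependent part is handled by the sequence estimate \eqref{equ-2-cor-main-full-support} of Corollary \ref{cor-main-full-support}, followed by the pointwise embedding $\Lambda^\beta \hookrightarrow C_0(\R_+)$ now applied inside the $\ell^2$-square function in $Y$. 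No genuinely new analytic difficulty should arise beyond what is already in Corollary \ref{cor-main-full-support}; the main and only subtle piece of bookkeeping is the $\epsilon$-loss incurred by choosing $\beta$ just above $\tfrac12$, which is exactly absorbed by the strictness of the hypothesis $c > \alpha + \max(\tfrac12, \tfrac{1}{\type L^p(Y)} - \tfrac{1}{\cotype L^p(Y)}) + 1$.
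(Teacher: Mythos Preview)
Your proposal is correct and follows essentially the same route as the paper: split off the null-space contribution $m(0)P_{N(A)}f$, apply Corollary~\ref{cor-main-full-support} with some $\beta>\tfrac12$ to the range part, and then use the Sobolev embedding $\Lambda^\beta_{2,2}(\R_+)\hookrightarrow C_0(\R_+)$ pointwise inside the lattice. The paper phrases the range part as applying Corollary~\ref{cor-main-full-support} to $m_1:=m\cdot 1_{(0,\infty)}$ (which has $m_1(0)=0$ and satisfies $m_1(2^n\cdot)\dyad_0=m(2^n\cdot)\dyad_0$), whereas you argue directly on the injective part $A_0$; these are equivalent.
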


\begin{proof}
This corollary follows from Corollary \ref{cor-main-full-support} noting that for $\beta > \frac12$, the classical Sobolev embedding yields $W^\beta_2(\R) \hookrightarrow C_0(\R)$, so that passing to $\R \leadsto \R_+$ via the exponential function, we have $\Lambda^\beta_{2,2}(\R_+) \hookrightarrow C_0(\R_+)$.
Then $L^p(Y(\Lambda^\beta_{2,2}(\R_+))) \hookrightarrow L^p(Y(C_0(\R_+)))$, since $L^p(Y)$ is a lattice.
Therefore, if $P : L^p(Y) \to L^p(Y)$ denotes the projection onto the null-space of $A$ and $m_1(\lambda) = 1_{(0,\infty)}(\lambda) m(\lambda)$, we have
$m(tA)f = m(tA)Pf + m(tA)(\Id - P)f = m(0)Pf + m_1(tA)f$.
Thus, $\| \sup_{t > 0}|m(tA)f|  \|_{L^p(Y)} \leq |m(0)| \|Pf\|_{L^p(Y)} + \|\sup_{t > 0} |m_1(tA)f|\|_{L^p(Y)} \lesssim |m(0)| \|f\|_{L^p(Y)} + \| t \mapsto m_1(tA) f \|_{L^p(Y(\Lambda^\beta))}.$
Now apply Corollary \ref{cor-main-full-support} to the last summand.
The proof of the second part of the corollary is similar.
\end{proof}

In the following corollary, we obtain pointwise convergence for dilates of spectral multipliers $m(tA)f(x,\omega)$.
Note hereby that we need that the orbits $t \mapsto m|_{(0,\infty)}(tA)f(x,\omega)$ belong to $C_0(\R_+)$, not only to $L^\infty(\R_+)$ (see Corollary \ref{cor-main-C0} above).

\begin{cor}
\label{cor-pointwise-convergence}
Under the hypotheses of Corollary \ref{cor-main-C0}, for any $f \in L^p(Y) = L^p(\Omega,Y(\Omega'))$ and any spectral multiplier $m$ such that $\sum_{n \in \Z} \|m(2^n \cdot) \dyad_0\|_{W^c_2(\R)} < \infty$, we have for pointwise a.e. $(x,\omega) \in \Omega \times \Omega'$ that
\begin{align*}
m(tA)f(x,\omega) & \to m(0) Pf(x,\omega) & \quad (t \to 0+), \\
m(tA)f(x,\omega) & \to m(0) Pf(x,\omega) & \quad (t \to \infty),
\end{align*}
where $P : L^p(Y) \to L^p(Y)$ denotes the projection onto the null-space of $A$.
The convergence holds also pointwise for a.e. $x \in \Omega$ in $Y$ and in $L^p(Y)$.
\end{cor}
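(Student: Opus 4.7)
My plan is to reduce the statement to showing $m_1(tA)f \to 0$ in all three claimed senses, where $m_1 := m \cdot 1_{(0,\infty)}$, via the operator identity $m(tA) = m(0) P + m_1(tA)$ on $L^p(Y)$. I would verify this identity on the direct sum decomposition $L^p(Y) = N(A) \oplus \overline{R(A)}$ (valid by reflexivity of the UMD lattice $L^p(Y)$): on $N(A)$, both $m(tA)$ and $m(0) P$ act as multiplication by $m(0)$ while $m_1(tA)$ vanishes since $m_1(0) = 0$; on $\overline{R(A)}$, $P = 0$ and $m_1(tA)$ coincides with $m(tA)$. Hence $m(tA)f - m(0)Pf = m_1(tA)f$ for every $f \in L^p(Y)$, and the three convergence claims are equivalent to $m_1(tA)f \to 0$ in the corresponding senses.

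For the first sense (pointwise in $\C$), the claim is immediate from Corollary \ref{cor-main-C0}: since $t \mapsto m_1(tA)f$ belongs to $L^p(Y(C_0(\R_+)))$, for a.e. $(x,\omega) \in \Omega \times \Omega'$ the orbit $t \mapsto m_1(tA)f(x,\omega)$ is a continuous function on $\R_+$ vanishing at both endpoints, which is exactly what is required.

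For the second sense (in $Y$ for a.e.\ $x \in \Omega$), the plan is to apply dominated convergence in $Y$. I would fix $x$ so that the envelope $h(\omega) := \sup_{s > 0} |m_1(sA)f(x,\omega)|$ lies in $Y$ (this is guaranteed by $L^p(Y(C_0(\R_+)))$-membership combined with the natural embedding into $L^p(Y(L^\infty(\R_+)))$), and such that, by Fubini applied to the first step, $m_1(tA)f(x,\omega) \to 0$ pointwise in $\omega$ as $t \to 0+$ (resp.\ $t \to \infty$). Along any sequence $t_n \to 0+$, the uniform domination $|m_1(t_nA)f(x,\cdot)| \leq h \in Y$ and the pointwise convergence combine via dominated convergence in $Y$ --- available because UMD lattices are reflexive and hence order continuous --- to give $\|m_1(t_nA)f(x,\cdot)\|_Y \to 0$. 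Passing from sequential to continuous limit is trivial.

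For the third sense (in $L^p(Y)$), I would iterate the domination one level up: the scalar function $M(x) := \|h\|_Y$ belongs to $L^p(\Omega)$ by Corollary \ref{cor-main-C0} and dominates $x \mapsto \|m_1(tA)f(x,\cdot)\|_Y$ uniformly in $t > 0$, while the previous step supplies the pointwise convergence for a.e.\ $x$. A final application of dominated convergence in $L^p(\Omega)$ closes the argument. I do not foresee a real obstacle: the entire analytic content of the statement is already encoded in the $L^p(Y(C_0(\R_+)))$-membership of the orbit from Corollary \ref{cor-main-C0}, and what remains are two nested dominated convergence arguments, first in $Y$ (using order continuity of reflexive Banach lattices) and then in $L^p(\Omega)$, with the continuous-versus-sequential character of the limit $t \to 0+$ (or $t \to \infty$) handled by passing to arbitrary sequences.
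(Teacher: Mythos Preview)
Your proposal is correct and follows essentially the same route as the paper: the same decomposition $m(tA)f = m(0)Pf + m_1(tA)f$ and the same use of Corollary~\ref{cor-main-C0} to place $t \mapsto m_1(tA)f$ in $L^p(Y(C_0(\R_+)))$, giving the pointwise-in-$(x,\omega)$ limits immediately. For the convergence in $Y$ and in $L^p(Y)$ the paper invokes the embeddings $Y(c_0) \hookrightarrow c_0(Y)$ and $L^p(Y(c_0)) \hookrightarrow c_0(L^p(Y))$ in one line, whereas you unpack these embeddings as two nested dominated-convergence arguments (using order continuity of the reflexive lattice $Y$); these are the same argument in different clothing.
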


\begin{proof}
We decompose $m(\lambda) = m(\lambda)1_{(0,\infty)}(\lambda) + m(0)1_{\{0\}}(\lambda) =: m_1(\lambda) + m(0) 1_{\{0\}}(\lambda)$.
Inserting the functional calculus yields
\[ m(tA)f(x,\omega) = m_1(tA)f(x,\omega) + m(0) Pf(x,\omega) \quad (t > 0,\: x \in \Omega,\: \omega \in \Omega') . \]
According to Corollary \ref{cor-main-C0}, $m_1(tA)f$ belongs to $L^p(Y(C_0(\R_+)))$, so by the definition of vector valued lattices as in Subsection \ref{subsec-UMD-lattices}, we deduce that for almost every $(x,\omega) \in \Omega \times \Omega'$, $m_1(tA)f(x,\omega)$ belongs to $C_0(\R_+)$, and thus we have
\[\lim_{t \to 0+}m_1(tA)f(x,\omega) = \lim_{t \to \infty} m_1(tA)f(x,\omega) = 0.\]
Then the last statement follows since $Y(c_0) \hookrightarrow c_0(Y)$ and $L^p(Y(c_0)) \hookrightarrow c_0(L^p(Y))$.
\end{proof}

\begin{remark}
\label{rem-main-interpolation}
In Theorem \ref{thm-main} above, in case that $A$ acts on $L^q(\Omega)$ for the scale $1 < q < \infty$ in a consistent way such that $A \otimes \Id_Z$ has a H\"ormander calculus $\Hor^\alpha_2$ on $L^q(Z)$ for any $1 < q < \infty$ and any UMD lattice $Z$ with a uniform value of $\alpha$, and moreover, $A$ is self-adjoint on $L^2(\Omega)$, then the parameter $c$ in Theorem \ref{thm-main} can be improved by complex interpolation.
\begin{enumerate}
\item
Namely, let $\theta \in (0,1)$ be a parameter and let $p \in (1,\infty)$ satisfy $|\frac1p - \frac12| < \frac{\theta}{2}$.
Moreover, let $Y$ be a UMD lattice realized over the measure space $\Omega'$, satisfying $Y = [H,Y_0]_\theta$ for some Hilbert space $H = L^2(\Omega',\mu')$ and a further UMD lattice $Y_0 = Y_0(\Omega')$.
This property and the parameter $\theta$ can be expressed in terms of $p$-convexity and $q$-concavity of $Y$ \cite[Lemma 2.11]{DKK}.
Then if $\supp(m) \subseteq [\frac12,2]$, we have
\[ \| t \mapsto m(tA) f \|_{L^p(Y(\Lambda^\beta))} \leq C \|m\|_{W^c_2(\R)} \|f\|_{L^p(Y)} \]
for
\[ c > \beta + \theta \left( \alpha + \frac32 \right) .\]
Moreover, if $(m_k)_{k \in \N}$ is a sequence of spectral multipliers with $\supp(m_k) \subseteq [\frac12,2]$, we have
\[ \left\| \left( \sum_k \|t \mapsto m_k(tA)f_k\|_{\Lambda^\beta}^2 \right)^{\frac12} \right\|_{L^p(Y)} \leq C \sup_k \|m_k\|_{W^c_2(\R)} \left\| \left( \sum_k |f_k|^2 \right)^{\frac12} \right\|_{L^p(Y)} . \]
Finally, under the above hypotheses, the same exponent $c$ can be chosen in Corollary \ref{cor-main-full-support} and, with $\beta$ replaced by $\frac12$, in Corollary \ref{cor-main-C0}.
We refer to Section \ref{sec-examples} for examples of such operators $A$.
\item
In the scalar case $Y = \C$, if $A$ has $\Hor^\alpha_2$ calculus with order $\alpha > \frac{d}{2}$, where typically, $d$ is a dimension of $\Omega$, our parameter becomes
\begin{equation}
\label{equ-rem-main-interpolation}
c  > \beta + (d+3) \left| \frac1p - \frac12 \right| .
\end{equation}
\item As a simple example, if $A$ is as in 2., then $\| t \mapsto m(tA)f \|_{L^p(Y(\Lambda^\beta))} \lesssim_\epsilon C_m \|f\|_{L^p(Y)}$ provided that $m$ is of class $C^c(0,\infty)$ with $c \in \N$ satisfying \eqref{equ-rem-main-interpolation}, $m$ vanishes on $[0,1]$ and for some $\epsilon > 0$,
\[ \left| \lambda^k \frac{d^k}{d\lambda^k}m(\lambda) \right| \leq C_m \lambda^{-\epsilon} \quad ( k = 0,1,\ldots,c,\: \lambda > 1).\]
\end{enumerate}
\end{remark}

\begin{proof}
1. We consider the mapping
\begin{equation}
\label{equ-1-rem-main-interpolation}
\begin{cases}
W^c_2(\frac12,2) \times L^q(Z) & \to L^q(Z(\Lambda^\beta)) \\
(m,f) & \mapsto (t\mapsto m(tA)f).
\end{cases}
\end{equation}
It is bounded for $q = 2$, $Z = H$ any Hilbert space and
\begin{equation}
\label{equ-2-rem-main-interpolation}
c = c_0 = \beta .
\end{equation}
Indeed, for $m \in W^\beta_2(\frac12,2) \subseteq \Lambda^\beta$ the function $\lambda \mapsto m(\lambda \cdot)$ is continuous and bounded $\R_+ \to \Lambda^\beta$ \cite[p.~148]{MauMeda}.
Thus, by self-adjointness of $A$, we have that $f \mapsto (t \mapsto m(tA)f)$ is bounded $L^2(H) \to L^2(H(\Lambda^\beta))$ with norm bounded by $\sup_{\lambda > 0} \|m(\lambda \cdot)\|_{\Lambda^\beta} = \|m\|_{\Lambda^\beta} \cong \|m\|_{W^c_2(\R)}$, the last equivalence according to Remark \ref{rem-Lambda-dilation-invariant}.
On the other hand, according to the assumptions of the Remark, \eqref{equ-1-rem-main-interpolation} is also bounded for any $q \in (1,\infty)$, $Z = Y_0$ and
\begin{equation}
\label{equ-3-rem-main-interpolation}
c = c_1 > \alpha + 1 + \frac12 + \beta .
\end{equation}
Here, the second summand is $1 > \max\left( \frac12, \frac{1}{\type L^q(Z)} - \frac{1}{\cotype L^q(Z)} \right)$.
Bilinear complex interpolation \cite[4.4.1 Theorem]{BeL} of \eqref{equ-1-rem-main-interpolation} yields boundedness for the choice of spaces $W^c_2(\frac12,2) = [W^{c_0}(\frac12,2),W^{c_1}(\frac12,2)]_\theta$, $L^p(Y) = [L^2,L^q]_\theta([H,Y_0]_\theta) = [L^2(H),L^q(Y_0)]_\theta$ \cite[Theorem 5.1.2]{BeL} with appropriate $q$ close to $1$ or $\infty$ and similarly, $L^p(Y(\Lambda^\beta)) = [L^2(H(\Lambda^\beta)),L^q(Y_0(\Lambda^\beta))]_\theta$.
Here, \eqref{equ-2-rem-main-interpolation} and \eqref{equ-3-rem-main-interpolation} give
\[ c = (1-\theta)c_0 + \theta c_1 > \beta + \theta \left( \alpha + \frac32 \right) . \]
For the square function estimate, we argue similarly as before and interpolate the bilinear mapping
\[ \begin{cases} \ell^\infty_N(W^c_2(\frac12,2)) \times L^q(Z(\ell^2_N)) & \to L^q(Z(\ell^2_N(\Lambda^\beta))) \\ (m_1, \ldots, m_N, (f_k)_{k = 1}^N) & \mapsto (t \mapsto m_k(tA)f_k)_{k = 1}^N . \end{cases} \]
Then we use Remark \ref{rem-R-bdd-Hilbert} to deduce that for $q = 2$, $Z = H$, $c_0 = \beta$ is allowed as in the first case of the proof.

2. We can take $\theta = 2\left|\frac1p - \frac12 \right|$ and apply 1.

3. We want to apply part 2. together with \eqref{equ-1-cor-main-full-support}.
Note that for $n \leq -1$, by the support condition on $m$, we have $m(2^n\cdot)\dyad_0 = 0$.
On the other hand, for $n \geq 0$, we have
\begin{align*}
\|m(2^n\cdot)\dyad_0\|_{W^c_2(\R)} & \lesssim \max_{k = 0,1, \ldots, c}\left\|\frac{d^k}{d\lambda^k} (m(2^n\lambda)) \right\|_{L^2(\frac12,2)} \\
& \lesssim \max_{k = 0,1,\ldots,c} 2^{kn} \left\| \frac{d^k}{d\lambda^k}(m)(2^n\lambda) \right\|_{L^2(\frac12,2)} \\
& \lesssim C_m 2^{-n\epsilon},
\end{align*}
which is summable over $n \geq 0$ for $\epsilon > 0$.
\end{proof}

\begin{remark}
\label{rem-Mau-Meda}
If $A$ is a left invariant sublaplacian on a stratified Lie group $G$, then $A$ does have a $\Hor^\alpha_2$ calculus on $L^p(G,Y)$ for any UMD lattice $Y$, with $\alpha$ depending on the doubling dimension of $G$ and concavity and convexity exponents of $Y$  (see Subsection \ref{subsec-examples-GE}).
It also has a $\Hor^\alpha_2$ calculus on $L^p(G)$ for $\alpha > Q/2$, where $Q$ is the homogeneous dimension of $G$ \cite{Christ}.
It is essentially shown in \cite[Theorem 2.1 (ii)]{MauMeda}, that Corollary \ref{cor-main-full-support} holds for this choice of $A$, $Y = \C$ and $c  > Q \left| \frac1p - \frac12 \right| + \beta$, compared to our $c > (Q+3) \left| \frac1p - \frac12 \right| + \beta$ using Remark \ref{rem-main-interpolation}.
Thus, Corollary \ref{cor-main-full-support} extends \cite[Theorem 2.1 (ii)]{MauMeda} to the vector valued case of $L^p(Y)$ and to square function estimates, under the price of a higher differentiation order $c$.
Another related result in the case $Y = \C$ with a different summability condition of $\|m(2^n \cdot)\dyad_0\|_{W^c_2(\R)}$ is given in \cite[Theorem 1.2]{Choi}.
\end{remark}

In Corollary \ref{cor-main-C0}, if the spectral multiplier $m(t)$ converges to a non-zero value at $t \to 0+$, then
\[ \liminf_{n \to - \infty} \|m(2^n \cdot) \dyad_0 \|_{W^c_2(\R)} \geq \liminf_{n \to - \infty} \|m(2^n \cdot) \dyad_0 \|_{L^2(\R)} \gtrsim \lim_{t \to 0+}|m(t)| > 0 .\]
Thus, the series $\sum_{n \leq 0} \|m(2^n \cdot) \dyad_0 \|_{W^c_2(\R)}$ has no chance to converge.
However, a possibility to include such spectral multipliers is given in the following Proposition \ref{prop-main-exp}, using a semigroup maximal estimate when available.

\begin{prop}
\label{prop-main-exp}
Let $Y = Y(\Omega')$ be a UMD lattice, $1 < p < \infty$ and $(\Omega,\mu)$ a $\sigma$-finite measure space.
Let $A$ be a $0$-sectorial operator on $L^p(Y)$.
Assume that $A$ has a $\Hor^\alpha_2$ calculus on $L^p(Y)$.
In addition, assume that $A$ is of the form $A = A_0 \otimes \Id_Y$, where $A_0$ is $0$-sectorial on $L^p(\Omega)$ and that the semigroup $\exp(-tA_0)$ generated by $-A_0$ is lattice positive and contractive on $L^p(\Omega)$ (more generally regular contractive on $L^p(\Omega)$).
Choose some
\[ c > \alpha + \max \left( \frac12, \frac{1}{\type L^p(Y)} - \frac{1}{\cotype L^p(Y)} \right) + 1 . \]
Let $m$ be a spectral multiplier satisfying
\[ m|_{[0,1]} \in C^1[0,1] , \: \| m' \cutoff_0 \|_{\Hor^{c-1}_2} < \infty, \: \sum_{n \geq 0} \| m(2^n \cdot) \dyad_0 \|_{W^c_2(\R)} < \infty , \]
where $\cutoff_0 \in C^\infty(\R_+)$ equals $0$ on $[4,\infty)$ and equals $1$ on $(0,2]$, and $(\dyad_n)_{n \in \Z}$ is, as before, a dyadic partition of $\R_+$.
Then for almost every $(x,\omega) \in \Omega \times \Omega'$, $t \mapsto m(tA)f(x,\omega)$ belongs to $L^\infty(\R_+)$ and
\begin{equation}
\label{equ-1-prop-main-exp}
 \left\| \sup_{t > 0} |m(tA) f| \right\|_{L^p(Y)} \leq C \left( |m(0)| + \|m' \cutoff_0 \|_{\Hor^{c-1}_2} + \sum_{ n \geq 0} \| m(2^n \cdot) \dyad_0 \|_{W^c_2(\R)} \right) \|f\|_{L^p(Y)} .
\end{equation}
Moreoever, let $(m_k)_{k \in \N}$ be a sequence of spectral multipliers satisfying
\[ m_k|_{[0,1]} \in C^1[0,1], \: \sup_k |m_k(0)| < \infty, \: \sup_k \|m_k' \cutoff_0 \|_{\Hor^{c-1}_2} < \infty, \: \sum_{n \geq 0} \sup_k \|m_k(2^n \cdot) \dyad_n \|_{W^c_2(\R)} < \infty . \]
Then
\begin{align}
\label{equ-2-prop-main-exp}
& \left\| \left( \sum_k \sup_{t > 0} |m_k(tA) f_k|^2 \right)^{\frac12} \right\|_{L^p(Y)} \\
& \leq C \left( \sup_k |m_k(0)| + \sup_k \|m_k' \cutoff_0 \|_{\Hor^{c-1}_2} + \sum_{ n \geq 0} \sup_k \|m_k(2^n \cdot) \dyad_0 \|_{W^c_2(\R)} \right) \left\| \left( \sum_k |f_k|^2 \right)^{\frac12} \right\|_{L^p(Y)} . \nonumber
\end{align}
\end{prop}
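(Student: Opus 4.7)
The plan is to split $m(\lambda) = m(0) e^{-\lambda} + \tilde m(\lambda)$, where $\tilde m := m - m(0) e^{-\cdot}$, so that $\tilde m(0) = 0$. Pointwise,
\[
\sup_{t>0}|m(tA)f| \leq |m(0)|\sup_{t>0}|e^{-tA}f| + \sup_{t>0}|\tilde m(tA)f|,
\]
and I will bound the two summands by the semigroup maximal theorem and Corollary \ref{cor-main-C0} respectively. Since $A = A_0 \otimes \Id_Y$ with $\exp(-tA_0)$ lattice positive and contractive on $L^p(\Omega)$, the first summand is controlled by $|m(0)|\,\|f\|_{L^p(Y)}$ thanks to \cite[Theorem 2]{Xu2015}.

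To apply Corollary \ref{cor-main-C0} to $\tilde m$, I verify $\sum_{n \in \Z}\|\tilde m(2^n\cdot)\dyad_0\|_{W^c_2(\R)} < \infty$. For $n \geq 0$, write $\tilde m(2^n\cdot)\dyad_0 = m(2^n\cdot)\dyad_0 - m(0)e^{-2^n\cdot}\dyad_0$: the first term is summable by hypothesis and the $W^c_2$-norm of the second decays doubly exponentially in $n$, since on $[\tfrac12,2]$ all derivatives of $e^{-2^n \lambda}$ are $O(2^{nc} e^{-2^{n-1}})$.

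For $n \leq 0$ the main tool is $\|m'\cutoff_0\|_{\Hor^{c-1}_2} < \infty$. On $[0, 2]$ we have $\cutoff_0 \equiv 1$, hence $\tilde m(\lambda) = G(\lambda) := \int_0^\lambda g(s)\,ds$ for $\lambda \in [0, 2]$, with
\[
g(s) = \bigl(m'(s) + m(0) e^{-s}\bigr) \cutoff_0(s) \in \Hor^{c-1}_2,
\]
because $e^{-\cdot}\cutoff_0 \in \HI \cap \Hor^{c-1}_2$ and $\Hor^{c-1}_2$ is a Banach algebra. For integer $k \geq 1$, $G^{(k)} = g^{(k-1)}$, and a change of variables together with the dyadic consequence of the Hörmander condition, namely $\|g^{(j)}\|_{L^2[R/2, 2R]} \lesssim R^{1/2 - j}$ for $0 \leq j \leq c - 1$ and all $R > 0$, gives
\[
\bigl\|\tfrac{d^k}{d\lambda^k} G(2^n\cdot) \bigr\|_{L^2[\frac12, 2]} = 2^{nk} \bigl\|g^{(k-1)}(2^n\cdot)\bigr\|_{L^2[\frac12, 2]} \lesssim 2^{nk} \cdot 2^{n(1-k)} = 2^n,
\]
uniformly in $k \leq c$; the case $k = 0$ is immediate from $|G(2^n\lambda)| \leq 2^{n+1}\|g\|_\infty$, and non-integer $c$ is handled by interpolation in the Bessel-potential scale. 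Thus $\|G(2^n\cdot)\dyad_0\|_{W^c_2} \lesssim 2^n$ and $\sum_{n \leq 0}\|G(2^n\cdot)\dyad_0\|_{W^c_2} < \infty$, as required.

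For the square function statement \eqref{equ-2-prop-main-exp}, I apply the same decomposition to each $m_k$. The dyadic estimates above all hold uniformly in $k$, so the rearrangement sum $\sum_l \sup_k \omega^k(l)$ required by Corollary \ref{cor-main-C0} is finite. The semigroup contribution is handled by the square function version of the semigroup maximal inequality, obtained by applying \cite[Theorem 2]{Xu2015} on the UMD lattice $Y(\ell^2)$ (UMD by Lemma \ref{lem-UMD-lattice-Hilbert-extension}) combined with the standard lattice domination $\sup_t|e^{-tA}f_k| \leq N|f_k|$ by a positive sublinear maximal operator $N$ which is regular on $L^p(\Omega)$ and thus extends to $Y(\ell^2)$-valued square functions. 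The main obstacle I foresee is precisely the dyadic estimate for $n \leq 0$, i.e.\ converting the Hörmander regularity of the derivative $g$ into the uniform factor $2^n$ on all $W^c_2$-pieces of the antiderivative $G$; the square function extension of the semigroup maximal inequality is a secondary but lower-stakes subtlety.
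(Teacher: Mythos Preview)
Your proof is correct and follows essentially the same route as the paper: the identical decomposition $m = m(0)e^{-\cdot} + \tilde m$, Xu's maximal theorem for the semigroup part, and Corollary \ref{cor-main-C0} for $\tilde m$ after verifying $\sum_{n\in\Z}\|\tilde m(2^n\cdot)\dyad_0\|_{W^c_2} < \infty$. The only cosmetic difference is in the $n\leq 0$ estimate: where you compute derivative by derivative via $G^{(k)} = g^{(k-1)}$ and the dyadic $L^2$ consequence of $g\in\Hor^{c-1}_2$, the paper uses the cleaner splitting $\|\tilde m(2^l\cdot)\dyad_0\|_{W^c_2} \lesssim \|\tilde m(2^l\cdot)\dyad_0\|_{L^2} + \|(\tilde m(2^l\cdot))'\widetilde{\dyad_0}\|_{W^{c-1}_2}$ and then reads off $2^l\|\tilde m'(2^l\cdot)\widetilde{\dyad_0}\|_{W^{c-1}_2} \lesssim 2^l\|\tilde m'\cutoff_0\|_{\Hor^{c-1}_2}$ directly from the definition of the H\"ormander norm---this avoids the integer-$c$ calculation and the interpolation step, but your argument reaches the same $2^n$ bound. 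For the square function part your reasoning matches the paper's (Xu on the UMD lattice $L^p(Y(\ell^2))$); note that the bound you obtain is $\sum_{n\in\Z}\sup_k\|\tilde m_k(2^n\cdot)\dyad_0\|_{W^c_2}$, which dominates the rearrangement sum, so your invocation of Corollary \ref{cor-main-C0} is legitimate.
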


\begin{proof}
We start by proving \eqref{equ-1-prop-main-exp}.
Let us put $n(t) = m(t) - m(0) \exp(-t)$.
We then have $\| \sup_{t > 0} |m(tA)f| \|_{L^p(Y)} \leq \| \sup_{t > 0} |n(tA)f| \, \|_{L^p(Y)} + |m(0)| \, \| \sup_{t > 0} |e^{-tA} f| \, \|_{L^p(Y)}$.
The second summand is bounded by $C |m(0)| \, \|f\|_{L^p(Y)}$ according to \cite[Theorem 2]{Xu2015} (note that since $A$ has a $\Hor^\alpha_2$ calculus, $\exp(-tA_0)$ extends to a bounded analytic semigroup on any strict subsector of $\C_+$).
Invoking Corollary \ref{cor-main-C0}, it suffices to estimate
\[ \sum_{l \in \Z} \|n(2^l \cdot)\dyad_0\|_{W^c_2(\R)}  < \infty .\]
We split according to $l \geq 0$ or $l \leq 0$.
For $l \geq 0$, we have $\|n(2^l \cdot) \dyad_0\|_{W^c_2(\R)} \leq \|m(2^l \cdot) \dyad_0 \|_{W^c_2(\R)} + |m(0)| \, \|\exp(-2^l (\cdot) ) \dyad_0 \|_{W^c_2(\R)} \lesssim \|m(2^l \cdot) \dyad_0 \|_{W^c_2(\R)} + |m(0)| 2^{l\lceil c\rceil}\exp(-\frac12 \cdot 2^l)$, which is summable according to the hypotheses.
For $l \leq 0$, we note by some elementary manipulation that $\|n(2^l \cdot) \dyad_0 \|_{W^c_2(\R)} \lesssim \|n(2^l \cdot) \dyad_0 \|_{L^2(\R)} + \|n(2^l \cdot)' \widetilde{\dyad_0}\|_{W^{c-1}_2(\R)}$, where $\widetilde{\dyad_0} \in C^\infty_c(\R_+)$ is some function equal to $1$ on the support of $\dyad_0$ and having support in $[\frac13,3]$.
For the first term in this estimate, we have
\begin{align*}
\|n(2^l\cdot)\dyad_0\|_{L^2(\R)}^2 & \lesssim \int_{\frac12}^2 |n(2^lt)|^2 dt = \int_{\frac12}^2 |m(2^lt) - m(0)\exp(-2^lt)|^2 dt \\
& = \int_{\frac12}^2 \left| m(0) + \int_0^{2^l t} m'(\xi) d\xi - m(0)\exp(-2^l t) \right|^2 dt \\
& \lesssim \int_{\frac12}^2 |m(0)(1-\exp(-2^l t))|^2 dt + \int_{\frac12}^2 \left| \int_0^{2^l t} m'(\xi) d\xi \right|^2 dt \\
& \lesssim |m(0)|^2 \int_{\frac12}^2 |2^l t|^2 dt + \int_{\frac12}^2 2^l t \int_0^{2^l t} |m'(\xi)|^2 d\xi dt \\
& \lesssim  2^{2l}|m(0)|^2 + 2^{2l} \|m'\|_{L^\infty(0,1)}^2 \\
& \lesssim  2^{2l}|m(0)|^2 + 2^{2l} \|m'\cutoff_0\|_{L^\infty(\R_+)}^2 \\
& \lesssim 2^{2l}(|m(0)|^2  + \|m'\cutoff_0\|_{\Hor^{c-1}_2}^2 ).
\end{align*}
We infer that $\sum_{l \leq 0} \|n(2^l \cdot)\dyad_0\|_{L^2(\R)} < \infty$.
For the second term, we have
\begin{align*}
\|n(2^l \cdot)' \widetilde{\dyad_0}\|_{W^{c-1}_2(\R)} & = 2^l \|n'(2^l \cdot) \widetilde{\dyad_0}\|_{W^{c-1}_2(\R)} = 2^l \|n'(2^l \cdot) \cutoff_0(2^l \cdot) \widetilde{\dyad_0}\|_{W^{c-1}_2(\R)} \\
& \lesssim 2^l \|n' \cutoff_0\|_{\Hor^{c-1}_2} \\
& \leq 2^l \left( \|m' \cutoff_0\|_{\Hor^{c-1}_2} + |m(0)| \, \underset{ < \infty}{\underbrace{\|e^{-t} \cutoff_0\|_{\Hor^{c-1}_2}}} \right),
\end{align*}
which is again summable over $l \leq 0$.

Now for the proof of \eqref{equ-2-prop-main-exp}, we proceed in a similar manner.
Here we use that $t\mapsto \exp(-tA)$ admits a maximal estimate on the UMD lattice $L^p(Y(\ell^2))$ according to \cite[Theorem 2]{Xu2015}.
Note that in contrast to Corollary \ref{cor-main-C0}, we cannot invoke the nonincreasing rearrangement of $(\|m_k(2^n \cdot)\dyad_0\|_{W^c_2(\R)})_{n \geq 0}$ due to the use of the semigroup.
\end{proof}

\begin{remark}
In \cite[Theorem 3.1]{Wro}. there is an easier proof of \eqref{equ-1-prop-main-exp} applying in most of the interesting cases of operators $A$ (the statement there concerns only scalar valued $L^p(\Omega)$ spaces, but seems verbatim to translate to the UMD lattice valued space case).
Wr\'obel's proof uses imaginary powers in place of the wave operators $\exp(isA)$ as they appear in our proof of Theorem \ref{thm-main}, and he imposes a different norm on $m$, which is slightly bigger in its derivation exponent, that is $c = \alpha + 2$, (note that in case of scalar valued $L^p(\Omega)$ spaces, we always have $\max\left( \frac12, \frac{1}{\type L^p(\Omega)} - \frac{1}{\cotype L^p(\Omega)} \right) = \frac12$, so that our $c > \alpha + \frac32$).
The exact norm in \cite{Wro} is uncomparable to our expression in \eqref{equ-1-prop-main-exp}, since the integration exponent in \cite[$C(n,\beta)$, p.~146]{Wro} is $q = 1$, whereas we have $q = 2$ in our space $W^c_q(\R) = W^c_2(\R)$.
Note that \cite{Wro} does not yield the pointwise convergence from Corollary \ref{cor-pointwise-convergence} nor square function estimates as in \eqref{equ-2-prop-main-exp}; but a sort of Paley-Littlewood equivalence in \cite[Section 4]{Wro}.
\end{remark}

Proposition \ref{prop-main-exp} applies in the following two important cases.
The second part concerns the Bochner-Riesz maximal operator.
Bounds for this operator in the case of the euclidean Laplacian have been obtained in dimension $2$, $1 < p < 2$ and certain exponents $\gamma$ by Tao \cite{Tao}, and in dimension $d \geq 3$ for sufficiently large $p$ and optimal exponent $\gamma > \max( d | \frac1p - \frac12| - \frac12, 0)$ by Lee \cite{Lee}.
Moreover, Seeger \cite{See} gave $L^p$ estimates of Paley-Littlewood $g$ functions associated with the Bochner-Riesz means.

\begin{cor}
\label{cor-wave-Bochner-Riesz-maximal}
Assume that the hypotheses of Proposition \ref{prop-main-exp} hold.
Let $c$ as in this proposition and
\[\delta > c  > \alpha + \max \left( \frac12, \frac{1}{\type L^p(Y)} - \frac{1}{\cotype L^p(Y)} \right) + 1. \]
Then the wave operators associated with $A$ satisfy the maximal estimate
\[ \left\| \sup_{t > 0} \left| (1  + t A)^{-\delta} \exp(itA) f \right| \, \right\|_{L^p(Y)} \leq C \| f \|_{L^p(Y)} . \]
Moreover, let 
\[ \gamma > c - \frac12 > \alpha + \max \left( \frac12, \frac{1}{\type L^p(Y)} - \frac{1}{\cotype L^p(Y)} \right) 
+ \frac12 . \]
Then the Bochner-Riesz means associated with $A$ satisfy the maximal estimate
\[ \left\| \sup_{t > 0} \left| \left( 1 - \frac{A}{t} \right)_+^\gamma f \right| \, \right\|_{L^p(Y)} \leq C \| f \|_{L^p(Y)} . \]
\end{cor}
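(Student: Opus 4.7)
The plan is to apply Proposition \ref{prop-main-exp} to two specific multiplier functions. For the wave operator case, take $m(\lambda) = (1+\lambda)^{-\delta}\exp(i\lambda)$, so that $m(tA) = (1+tA)^{-\delta}\exp(itA)$. For the Bochner--Riesz case, substitute $s = 1/t$ in the supremum; then
\[
\sup_{t>0}\left| (1-A/t)_+^\gamma f\right| = \sup_{s>0}\left|m(sA)f\right|
\qquad \text{with } m(\lambda) = (1-\lambda)_+^\gamma.
\]
Thus in both cases it suffices to verify the three hypotheses of Proposition \ref{prop-main-exp}: namely $m|_{[0,1]}\in C^1[0,1]$, $\|m'\cutoff_0\|_{\Hor^{c-1}_2}<\infty$, and $\sum_{n\geq 0}\|m(2^n\cdot)\dyad_0\|_{W^c_2(\R)}<\infty$.

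For the wave multiplier, smoothness of $m$ on $[0,\infty)$ is obvious, so condition (i) is clear and condition (ii) follows at once, since $m'\cutoff_0$ is a $C^\infty$ function of compact support in $\R_+$, and any such function lies trivially in $\Hor^{c-1}_2$ (in the definition of the $\Hor^{c-1}_2$ norm, only a bounded range of dilations $R$ produces a non-vanishing $\phi (m'\cutoff_0)(R\,\cdot)$). The heart of the argument is condition (iii). On $\supp\dyad_0\subseteq[\tfrac12,2]$, every derivative of $(1+2^n\lambda)^{-\delta}$ of order $\leq c$ is dominated by $C\, 2^{-n\delta}$, while each derivative of $\exp(i2^n\lambda)$ contributes a factor $2^n$. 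Leibniz's rule then yields
\[
\|m(2^n\cdot)\dyad_0\|_{W^c_2(\R)} \lesssim 2^{n(c-\delta)},
\]
which is summable over $n\geq 0$ precisely because $\delta > c$.

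For the Bochner--Riesz multiplier, note first that $\gamma > c - \tfrac12 > 1$, so $(1-\lambda)^\gamma \in C^1[0,1]$, giving (i). Condition (iii) is essentially vacuous: on $\supp\dyad_0$ and for $n\geq 1$ one has $2^n\lambda\geq 1$, hence $(1-2^n\lambda)_+^\gamma\equiv 0$ on $\supp\dyad_0$, so only the term $n=0$ survives, and $\|(1-\lambda)_+^\gamma\dyad_0\|_{W^c_2(\R)}$ is finite by the classical Sobolev regularity of truncated powers, $(1-\lambda)_+^\sigma \in H^s_{\mathrm{loc}}(\R) \iff s < \sigma + \tfrac12$, applied with $\sigma=\gamma$ and $s=c$. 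For (ii), the same principle applied to $m'(\lambda) = -\gamma(1-\lambda)_+^{\gamma-1}$ with $\sigma = \gamma-1 > c-\tfrac32$ yields $(1-\lambda)_+^{\gamma-1}\cutoff_0 \in W^{c-1}_2(\R)$ locally; combined with the compact support argument of the wave case, this gives $\|m'\cutoff_0\|_{\Hor^{c-1}_2}<\infty$.

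The main obstacle is matching the sharp exponent thresholds: the oscillatory factor $\exp(i2^n\lambda)$ in the wave case forces $\delta > c$ through the derivative count in (iii), while in the Bochner--Riesz case the boundary singularity of $(1-\lambda)_+^\gamma$ at $\lambda=1$, via the Sobolev regularity of truncated powers, forces $\gamma > c - \tfrac12$ in (ii). These are exactly the lower bounds imposed in the statement, so Proposition \ref{prop-main-exp} then delivers both maximal estimates with no further adjustment.
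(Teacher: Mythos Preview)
Your proof is correct and follows essentially the same route as the paper: apply Proposition \ref{prop-main-exp} to $m(\lambda)=(1+\lambda)^{-\delta}e^{i\lambda}$ and to $(1-\lambda)_+^\gamma$, and verify the three hypotheses in each case. The only cosmetic difference is that the paper uses $m(\lambda)=(1-4\lambda)_+^\gamma$ so that $\supp m\subseteq[0,\tfrac14]$ is disjoint from $\supp\dyad_0$ for \emph{all} $n\geq 0$, making condition (iii) trivially zero; you instead keep the $n=0$ term and dispatch it via the Sobolev regularity $(1-\lambda)_+^\gamma\in H^s_{\mathrm{loc}}$ for $s<\gamma+\tfrac12$, which is the same fact the paper invokes (citing \cite{COSY}) for condition (ii).

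Two small points worth tightening. First, your Leibniz estimate $\|m(2^n\cdot)\dyad_0\|_{W^c_2}\lesssim 2^{n(c-\delta)}$ is written for integer $c$; the paper passes to non-integer $c$ by complex interpolation of the Sobolev scale, which you should mention. Second, in the Bochner--Riesz case your phrase ``combined with the compact support argument of the wave case'' is slightly misleading: in the wave case $m'\cutoff_0$ was $C^\infty_c$, whereas here $m'\cutoff_0$ has a genuine singularity at $\lambda=1$ and is nonzero down to $\lambda=0$. What you actually need (and what you have) is that the only dilation scales $R$ for which the singularity lands in $\supp\phi$ form a compact set, and there the local $W^{c-1}_2$ bound is uniform; for $R\to 0$ the function is smooth and bounded, and for $R$ large it vanishes. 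This is exactly the content of the $\Hor^{c-1}_2$ membership cited from \cite{COSY}.
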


\begin{proof}
We check the hypotheses of Proposition \ref{prop-main-exp} for the spectral multiplier functions $m(\lambda) = (1+\lambda)^{-\delta} \exp(i\lambda)$ and $m(\lambda) = (1 - 4 \lambda)_+^\gamma$, where the factor $4$ is for convenience.
For the wave spectral multiplier, we have $m(0) = 1$.
Moreover, if $c \in \N$, then $\frac{d^c}{d\lambda^c} m(\lambda) = \sum_{k = 0}^c \alpha_k (1 + \lambda)^{-\delta-k} e^{i\lambda}$ for certain coefficients $\alpha_k \in \C$.
Thus, $\frac{d^c}{d\lambda^c} m(2^n \cdot)|_\lambda = 2^{nc} \sum_{k =0}^c \alpha_k (1 + 2^n \lambda)^{-\delta-k} e^{i2^n \lambda}$ and $\|m(2^n \cdot)\dyad_0\|_{W^c_2(\R)} \lesssim 2^{nc} 2^{-n\delta}$.
By complex interpolation of the spaces $W^c_2$, we deduce that for $c > \frac12$, we have $\|m(2^n \cdot) \dyad_0 \|_{W^c_2(\R)} \lesssim 2^{nc} 2^{-n\delta}$ (see also \cite[p.~65-67]{KrPhD}).
We infer that for $\delta > c$, $\sum_{n \geq 0} \|m(2^n \cdot) \dyad_0 \|_{W^c_2(\R)} < \infty$.
Note that $m$ is a $C^\infty$ function in a neighborhood of $[0,4]$, so that $m' \cutoff_0 \in \Hor^{c-1}_2$.

For the Bochner-Riesz spectral multiplier, we have $m(0) = 1$, and since $\supp(m) \subseteq [0,\frac14]$, $m'\cutoff_0(\lambda) = m'(\lambda) = -4 \gamma (1 - 4 \lambda)^{\gamma - 1}_+$ in distributional sense.
According to \cite[p.~11]{COSY}, $m'$ belongs to $\Hor^{c-1}_2$ iff $\gamma - 1 > c - 1 - \frac12$.
For $n \geq 0$, note that $m(2^n \cdot)$ and $\dyad_0$ have disjoint supports, so that $\sum_{n \geq 0} \|m(2^n \cdot) \dyad_0\|_{W^c_2(\R)} = 0$.
\end{proof}

\begin{remark}
\label{rem-pointwise-convergence-exp}
Similarly to Corollary \ref{cor-pointwise-convergence}, under the hypotheses of Proposition \ref{prop-main-exp}, we obtain a pointwise convergence for $f \in L^p(Y)$ and for a.e. $(x,\omega) \in \Omega \times \Omega'$:
\begin{align*}
m(tA)f(x,\omega) & \to m(0) f(x,\omega) \quad (t \to 0+) , \\
m(tA)f(x,\omega) & \to m(0) Pf(x,\omega) \quad (t \to \infty),
\end{align*}
where $P : L^p(Y) \to L^p(Y)$ denotes the projection onto the null-space of $A$.
In particular, according to the proof of Corollary \ref{cor-wave-Bochner-Riesz-maximal}, we have with $\delta$ and $\gamma$ as in this corollary, for $f \in L^p(Y)$ and for a.e. $(x,\omega) \in \Omega \times \Omega'$:
\begin{align*}
(1+tA)^{-\delta} \exp(itA)f(x,\omega) & \to f(x,\omega) \quad (t \to 0+), \\
(1+tA)^{-\delta} \exp(itA)f(x,\omega) & \to Pf(x,\omega) \quad (t \to \infty),\\
(1 - tA)^\gamma_+f(x,\omega) & \to f(x,\omega) \quad (t \to 0+), \\
(1 - tA)^\gamma_+f(x,\omega) & \to Pf(x,\omega) \quad (t \to \infty).
\end{align*}
Again the convergence also holds pointwise for a.e. $x \in \Omega$ in $Y$ and in $L^p(Y)$.

Indeed, we decompose as in the proof of Proposition \ref{prop-main-exp}, $m(tA) = m(0) e^{-tA} + n(tA)$, where $n$ satisfies the hypotheses for $m$ in Corollary \ref{cor-pointwise-convergence} with $n(0) = 0$.
Now we have $\lim_{t \to 0+} m(tA)f(x,\omega) = \lim_{t \to 0+} m(0)e^{-tA}f(x,\omega) + \lim_{t \to 0+}n(tA)f(x,\omega) = m(0)f(x,\omega) + 0$, according to the pointwise convergence of the semigroup to the identity from \cite[Corollary 6.2]{HoMa}.
In the same way, we have $\lim_{t \to \infty} m(tA)f(x,\omega) = \lim_{t \to \infty} m(0)e^{-tA}f(x,\omega) + \lim_{t \to \infty}n(tA)f(x,\omega) = m(0)Pf(x,\omega) + 0$ \cite[Corollary 6.2]{HoMa}.
\end{remark}

\section{Examples and Applications}
\label{sec-examples}

In this section, we give several examples of $0$-sectorial operators $A$ and of UMD lattices $Y$, to illustrate our main results from Section \ref{sec-main}.

\subsection{H\"ormander calculus for (generalised) Gaussian estimates}
\label{subsec-examples-GE}

In all our results in Section \ref{sec-main}, we assume our $0$-sectorial operator $A$ to have a H\"ormander functional calculus on $L^p(Y)$, where $Y$ is a UMD lattice.
Such a functional calculus has been recently established for a broad class of differential operators in \cite{DKK}, for which such a calculus has been known before on the scalar valued spaces $L^p(\Omega,\C)$.
In this subsection, we present the wide range of such examples from \cite[Section 5]{DKK}.

Let us first recall the definition of space of homogeneous type.

\begin{defi}
Let $(\Omega,\dist,\mu)$ be a metric measure space, that is, $\dist$ is a metric on $\Omega$ and $\mu$ is a Borel measure on $\Omega$.
We denote $B(x,r) = \{ y \in \Omega :\: \dist(x,y) \leq r \}$ the closed balls of $\Omega$.
We assume that $\mu(B(x,r)) \in (0,\infty)$ for any $x \in \Omega$ and $r > 0$.
Then $\Omega$ is said to be a space of homogeneous type if there exists a constant $C < \infty$ such that the doubling condition holds:
\[ \mu(B(x,2r)) \leq C \mu(B(x,r)) \quad (x \in \Omega,\: r > 0) . \]
\end{defi}

We write in short $V(x,r) = \mu(B(x,r))$.
In what follows in this subsection, $(\Omega,\dist,\mu)$ is always a space of homogeneous type.
It is well-known that there exists some finite $d \in (0,\infty)$ such that $V(x,\lambda r) \leq C \lambda^d V(x,r)$ for any $x \in \Omega$, $r > 0$ and $\lambda \geq 1$.
Such a $d$ is called (homogeneous) dimension of $\Omega$.

We now introduce both the notions of Gaussian estimates and generalised Gaussian estimates.

\begin{defi}
Let $(T_t)_{t \geq 0}$ be a semigroup acting on $L^2(\Omega)$.
Assume that \[T_tf(x) = \int_\Omega p_t(x,y) f(y) \,dy\] for any $f \in L^2(\Omega),\:x \in \Omega,\:t > 0$ and some measurable functions $p_t : \Omega \times \Omega \to \C$.
Let $m \geq 2$.
Then $(T_t)_t$ is said to satisfy Gaussian estimates (of order $m$) if there exist constants $C,c > 0$ such that
\begin{equation}
\label{equ-GE-prelims}
|p_t(x,y)| \leq C \frac{1}{V(x,r_t)} \exp\Biggl(-c \biggl(\frac{\dist(x,y)}{r_t}\biggr)^{\frac{m}{m-1}} \Biggr) \quad (x,y \in \Omega,\: t > 0),
\end{equation}
where $r_t = t^{\frac1m}$.
\end{defi}

\begin{defi}
Let $(\Omega,\dist,\mu)$ be a space of homogeneous type.
Let $A$ be a self-adjoint operator on $L^2(\Omega)$ generating the semigroup $(T_t)_{t \geq 0}$.
Let $p_0 \in [1,2)$ and $m \in [2,\infty).$
We say that $(T_t)_{t \geq 0}$ satisfies generalised Gaussian estimates (with parameters $p_0,m$) if there exist $c,C < \infty$ such that
\begin{multline}
\label{equ-GGE}
\bigl\| 1_{B(x,r_t)} T_t 1_{B(y,r_t)} \bigr\|_{L^{p_0}(\Omega) \to L^{p_0'}(\Omega)}\\  \leq C |V(x,r_t)|^{-(\frac{1}{p_0} - \frac{1}{p_0'})} \exp \Biggl(-c \biggl(\frac{\dist(x,y)}{r_t} \biggr)^{\frac{m}{m-1}}\Biggr) \quad (x,y \in \Omega,\: t > 0),
\end{multline}
where $r_t = t^{\frac1m}.$
\end{defi}

\begin{remark}
\label{rem-GE-GGE}
According to \cite[Proposition 2.9]{BK02} and \cite[Proposition 2.1]{BK05}, Gaussian estimates \eqref{equ-GE-prelims} with parameter $m \geq 2$ for a semigroup imply generalised Gaussian estimates \eqref{equ-GGE} with parameter $p_0 = 1$ and $m$.
Moreover, according to \cite[Proposition 2.1]{BK05}, generalised Gaussian estimates with parameters $p_0 \in [1,2)$ and $m \geq 2$ imply generalised Gaussian estimates with parameters $p_1 \in [p_0,2)$ and $m$.
Finally, note that \eqref{equ-GGE} implies that $(T_t)_{t \geq 0}$ is a bounded self-adjoint semigroup on $L^2(\Omega)$, so that $A$ is then necessarily positive definite.
\end{remark}

We recall the main theorem on H\"ormander functional calculus on $L^p(Y)$ for Gaussian estimates from \cite{DKK}.

\begin{defi}
Let $p \in (1,\infty),$ $p_Y \in (1,2]$ and $q_Y \in [2,\infty).$
We put 
\begin{equation}
\label{equ-defi-alpha}
\alpha(p,p_Y,q_Y) = \max\biggl(\frac{1}{p},\frac{1}{p_Y},\frac12 \biggr) - \min \biggl(\frac{1}{p},\frac{1}{q_Y},\frac12 \biggr) \in (0,1).
\end{equation}
Informally spoken, this is the length of the segment, which is the convex hull of the points $\frac{1}{p},\frac{1}{p_Y},\frac{1}{q_Y}$ and $\frac12$ sitting on the real line.
\end{defi}

For the next theorem, we recall the notion of $p$-convexity and $q$-concavity of a Banach lattice \cite[Definition 1.d.3]{LTz} and the $p$-convexification of a Banach lattice \cite[p.~215]{RdF}.
Note that with these notions, a space $L^r(\Omega')$ is $r$-convex and $r$-concave and its $s$-convexification is $L^\frac{r}{s}(\Omega')$.

\begin{thm}{\cite[Theorem 4.10]{DKK}}
\label{thm-Hoermander}
Let $(\Omega,\dist,\mu)$ be a space of homogeneous type with a dimension $d$.
Let $A$ be a self-adjoint operator on $L^2(\Omega)$ generating the semigroup $(T_t)_{t \geq 0}$.
Let $p_0 \in [1,2)$ and $m \in [2,\infty)$.
Assume that $(T_t)_{t \geq 0}$ satisfies Gaussian (resp. generalised Gaussian) estimates with parameter $m$ (resp. $p_0,m$).
Let $Y$ be a UMD lattice which is $p_Y$-convex and $q_Y$-concave for some $p_Y \in (1,2]$ (resp. $p_Y \in (p_0,2]$) and $q_Y \in [2,\infty)$ (resp. $q_Y \in [2,p_0')$).
Assume that the convexifications $Y^{p_Y}$ and $(Y')^{q_Y'}$ are also UMD lattices.
If $Y = L^s(\Omega')$ for some $s \in (1,\infty)$, then any $p_Y \in (p_0,s)$ and any $q_Y \in (s,p_0')$ are admissible.
Finally, assume that $A$ has a bounded $\HI(\Sigma_\omega)$ calculus on $L^p(\Omega,Y)$ for some fixed $p \in (1,\infty)$ (resp. $p \in (p_0,p_0')$) and $\omega \in (0,\pi)$.

Then $A$ has a H\"ormander $\Hor^\beta_2$ calculus on $L^p(\Omega,Y)$ with
\[ \beta > \alpha(p,p_Y,q_Y) \cdot d + \frac12\]
and $\alpha$ from \eqref{equ-defi-alpha}.
\end{thm}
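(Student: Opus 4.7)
The plan is to bridge the hypothesised $\HI(\Sigma_\omega)$ calculus of $A$ on $L^p(\Omega,Y)$ with a genuine H\"ormander $\Hor^\beta_2$ calculus by exploiting the (generalised) Gaussian estimates as the extra ingredient. The $\HI$ calculus controls $\|\phi(A)\|$ by $\|\phi\|_{\infty,\omega}$, a too-strong norm for H\"ormander symbols, while the heat kernel bounds will provide a complementary control of $\phi(A)$ in a Sobolev norm on an $L^{p_0} \to L^{p_0'}$ off-diagonal scale. Interpolating these two endpoints, weighted by the geometric parameter $\alpha(p,p_Y,q_Y)$ measuring the distance of $\frac1p, \frac{1}{p_Y}, \frac{1}{q_Y}$ from $\frac12$, should then yield the $\Hor^\beta_2$ bound with the Sobolev index $\beta > \alpha(p,p_Y,q_Y)\cdot d + \frac12$.

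First I would derive from \eqref{equ-GE-prelims} or \eqref{equ-GGE}, following the approach of Blunck-Kunstmann and Duong-Ouhabaz-Sikora, the Davies-Gaffney type off-diagonal estimate for compactly supported dyadic pieces of spectral multipliers: for $\phi \in W^\alpha_2(\R)$ with $\supp \phi \subseteq [\frac12,2]$ and $\alpha > \frac{d}{2}$,
\[
\bigl\| 1_{B(x,r)} \, \phi(tA) \, 1_{B(y,r)} \bigr\|_{L^{p_0} \to L^{p_0'}} \lesssim V(x,r)^{-\left(\frac{1}{p_0} - \frac{1}{p_0'}\right)} \exp\!\Bigl(-c \bigl(\dist(x,y)/r\bigr)^{\kappa}\Bigr) \|\phi\|_{W^\alpha_2} ,
\]
for a suitable exponent $\kappa > 1$, where $r$ is the natural length scale attached to $tA$. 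The Sobolev order $\frac{d}{2}$ here reflects the homogeneous dimension through a Sobolev embedding, while the exponential off-diagonal decay is inherited from the heat kernel by a resolvent expansion combined with a finite-propagation-speed type identity.

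Next I would pass to the vector-valued setting. Since $Y$ is $p_Y$-convex, $q_Y$-concave, and both $Y^{p_Y}$ and $(Y')^{q_Y'}$ are UMD, Calder\'on's complex interpolation of Banach lattices writes $Y$ as $[Y_0, Y_1]_\theta$ with $Y_0$ essentially a weighted $L^2$ lattice and $Y_1$ an auxiliary UMD lattice; combined with complex interpolation of the $L^p$-scale, this realises $L^p(\Omega,Y)$ between a Hilbertian $L^2(\Omega, L^2(\Omega'))$ endpoint --- on which the scalar Davies-Gaffney bound lifts trivially by Fubini --- and the $\HI$-calculus endpoint $L^p(\Omega,Y_1)$. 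Applying Stein's interpolation theorem to an analytic family $z \mapsto \phi_z(A)$ obtained by fractional-order smoothing of $\phi$ then yields the $\Hor^\beta_2$ bound, with $\beta$ coming out as the convex combination of the endpoint smoothness exponents; optimising the interpolation parameter $\theta$ matches exactly $\alpha(p,p_Y,q_Y)\cdot d + \frac12 + \epsilon$.

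The main obstacle I expect is to keep the off-diagonal kernel estimate compatible with the vector-valued Calder\'on interpolation: the analytic family must be chosen so that the Davies-Gaffney bound and the $\HI$ bound are simultaneously uniform along the boundary of the interpolation strip, and so that the smoothness index $\alpha$ degenerates correctly at the two endpoints. A secondary technical point is the Paley-Littlewood reassembly of the compactly supported dyadic pieces into the full H\"ormander multiplier: this requires the UMD property of $Y$ and of its convexifications in order to invoke the Rademacher square-function equivalence of Lemma \ref{lem-UMD-lattice-Rademacher}, together with an $R$-bounded version of the $\HI$ calculus that is available because $L^p(\Omega,Y)$ enjoys Pisier's property $(\alpha)$ by Lemma \ref{lem-property-alpha}.
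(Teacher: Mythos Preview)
The paper does not actually prove this statement: Theorem~\ref{thm-Hoermander} is quoted verbatim from \cite[Theorem 4.10]{DKK} and is used here as a black box to furnish examples of operators possessing a H\"ormander calculus on $L^p(\Omega,Y)$. There is therefore no proof in the present paper against which to compare your proposal.

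That said, your sketch is a reasonable outline of the argument carried out in \cite{DKK}: deriving Davies--Gaffney type off-diagonal bounds for dyadic pieces $\phi(tA)$ from the (generalised) Gaussian estimates, realising $L^p(\Omega,Y)$ as a complex interpolation space between a Hilbertian endpoint and a UMD-lattice endpoint via the $p_Y$-convexity/$q_Y$-concavity hypotheses (this is exactly where the assumption that $Y^{p_Y}$ and $(Y')^{q_Y'}$ are UMD is used), and then interpolating an analytic family of multipliers to obtain the H\"ormander bound with the stated index. One point you should be more careful about: the $\HI(\Sigma_\omega)$ calculus hypothesis is not merely one endpoint of the interpolation --- it is needed to ensure that the dyadic pieces $\dyad_n(A)$ and the Paley--Littlewood machinery are available on $L^p(\Omega,Y)$ in the first place, and to pass from the compactly supported multiplier estimate to the full $\Hor^\beta_2$ calculus. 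Your final paragraph gestures at this, but the precise role of the $\HI$ calculus versus the off-diagonal estimates in the interpolation should be disentangled if you were to write out the argument in full.
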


\begin{remark}
\label{rem-examples-Gaussian-estimates}
Theorem \ref{thm-Hoermander} applies to a wide range of differential operators in different contexts.
In the following, we list operators having a H\"ormander $\Hor^\beta_2$ calculus on $L^p(\Omega,Y)$ for $1 < p < \infty$, for any UMD lattice $Y$ and $\beta > \alpha(p,p_Y,q_Y) \cdot d + \frac12$.
\begin{enumerate}
\item The heat semigroup on a complete Riemannian manifold with non-negative Ricci curvature \cite{LY}, \cite[p. 3/70 (1.3)]{GriTel}, \cite{Sal}, \cite[Theorem 4.2.1 \& p.~45]{Fen}, \cite[Proposition 4.8]{DKK}.
\item Schr\"odinger operators on connected and complete Riemannian manifolds with non-negative Ricci curvature and locally integrable, positive potential \cite[Section 7.4, (7.8)]{DuOS}, \cite[Proposition 4.8]{DKK}.
\item Other Schr\"odinger and elliptic differential operators acting on $L^2(\Omega)$, where $\Omega \subseteq \R^d$ is an open subset of homogeneous type \cite{Ouh06}, \cite[Section 6.4, in particular Theorems 6.10, 6.11]{Ouh}.

\item Sub-laplacians on Lie groups with polynomial volume growth \cite[Theorem 4.2, Example 2]{Sal}, \cite{Gri}, \cite[Corollary 4.9]{DKK}.
\item Heat semigroups on fractals \cite[(1.4)]{GriTel}, \cite[Section 7.11]{DuOS}.
\item
For a discussion of many further examples where Gaussian estimates as in \eqref{equ-GE-prelims} are satisfied, we refer to \cite[Section 7]{DuOS}, see also \cite[Subsection 5.1]{DKK}.
\end{enumerate}
\end{remark}

\begin{remark}
\label{rem-examples-generalised-Gaussian-estimates}
In the recent past, several operators with generalised Gaussian estimates \eqref{equ-GGE} for some $p_0 > 1$ have been studied.
In these cases we will obtain according to \cite{KuUl}, \cite[Theorem 4.7]{DKK} and Theorem \ref{thm-Hoermander} that $A$ has a $\Hor^\beta_2$ calculus on $L^p(\Omega,L^s(\Omega'))$ for $p_0 < p,s < p_0'$ and 
\begin{equation}
\label{equ-beta-GGE}
\beta > \Biggl( \max\biggl(\frac1p , \frac1s , \frac12\biggr) - \min \biggl(\frac1p, \frac1s, \frac12\biggr) \Biggr) \cdot d + \frac12 .
\end{equation}
Such examples can be found e.g. in \cite[Section 3]{KuUl}, \cite[Section 2]{Bl}, see also \cite[Subsection 5.3]{DKK}.
\end{remark}

\paragraph{A non-self-adjoint example}

Up to now, our examples of $0$-sectorial operators are mostly of the form $A = A_0 \ot \Id_Y$, where $A_0 : L^2(\Omega) \to L^2(\Omega)$ is self-adjoint.
However self-adjointness is not necessary for an operator to have a H\"ormander calculus on $L^2(\Omega)$, as the following natural example shows.

Let $(T_t)_{t \geq 0}$ be a self-adjoint semigroup of contractions on $L^2(\Omega,\mu)$, where $\Omega$ is a space of homogeneous type, with dimension $d \in \N$.
Assume that $(T_t)_{t \geq 0}$ satisfies Gaussian estimates as in \eqref{equ-GE-prelims}.
Then according to \cite[Theorem 3.2]{DSY}, there exists some $r_0 \in [1,2)$ such that for any $p \in (r_0,\infty)$, any $w \in A_{\frac{p}{r_0}}$ (Muckenhoupt weight class \cite[p.~1109]{DSY}) and $s > \frac{d+1}{2}$, the generator $A$ of $(T_t)_{t \geq 0}$ has a H\"ormander calculus on $L^p(\Omega,w d \mu)$.
Picking $p = 2$, the semigroup $(T_t)_{t \geq 0}$ is thus uniformly bounded on $L^2(\Omega,w d\mu)$, but if $w \in A_{\frac{2}{r_0}}$ is not constant, the semigroup $(T_t)_{t \geq 0}$ and the generator $A$ are not self-adjoint on this weighted $L^2$ space.
Indeed, if $p_t(x,y)$ denotes the integral kernel of $(T_t)_{t \geq 0}$ with respect to measure $\mu$, then for $f \in L^\infty(\Omega,w d\mu)$ of compact support,
\begin{align*}
T_t f(x) & = \int_\Omega p_t(x,y) f(y) d \mu(y) \\
& = \int_\Omega \frac{1}{w(y)} p_t(x,y) f(y) w(y) d \mu(y).
\end{align*}
Therefore, the kernel of $T_t$ with respect to weighted measure $w d\mu$ is $q_t(x,y) = \frac{1}{w(y)} p_t(x,y)$, which is not symmetric if e.g. $p_t(x,y) \neq 0$ for all $x,y \in \Omega$ and $w$ is not constant.
Thus, this semigroup is not self-adjoint on $L^2(\Omega,w d\mu)$ yet its generator is $0$-sectorial and has a H\"ormander calculus on this space.

\subsection{Examples in UMD lattices}
\label{subsec-examples-UMD-lattices}

Apart from $L^q(\Omega')$ spaces for $1 < q < \infty$ which are the major examples of UMD lattices, one can also consider their intersections and sums when seen as subspaces of the common superspace of (equivalence classes of) measurable functions over $\Omega'$.

\begin{lemma}
\label{lem-intersection-sum-of-Lq}
Let $1 < q_1,q_2 < \infty$.
Let $\Omega'$ be a $\sigma$-finite measure space.
\begin{enumerate}
\item Then $Y = L^{q_1}(\Omega') \cap L^{q_2}(\Omega')$ equipped with the norm $\norm{f}_Y = \max\{\norm{f}_{q_1},\norm{f}_{q_2}\}$ and obvious partial order is a UMD lattice.
\item Then $Y = L^{q_1}(\Omega') + L^{q_2}(\Omega')$ equipped with the norm $\norm{f}_Y = \inf\{ \norm{g}_{q_1} + \norm{h}_{q_2} : \: f = g + h \}$ and obvious partial order is also a UMD lattice.
\end{enumerate}
\end{lemma}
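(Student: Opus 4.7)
My plan is to handle the intersection case directly and to deduce the sum case from it by a duality argument. In both cases the order ideal property, containment of indicators of finite-measure sets, and the $\sigma$-Fatou property will be inherited routinely from $L^{q_1}(\Omega')$ and $L^{q_2}(\Omega')$, so the essential content lies in the UMD property. For Part 1, the key identification is $L^p(\R, L^{q_1}(\Omega') \cap L^{q_2}(\Omega')) = L^p(\R, L^{q_1}(\Omega')) \cap L^p(\R, L^{q_2}(\Omega'))$ with equivalent norms; this follows from the elementary pointwise inequality $\max(a,b)^p \leq a^p + b^p \leq 2\max(a,b)^p$. Since each $L^{q_i}(\Omega')$ is a UMD lattice, the Hilbert transform is bounded on each $L^p(\R, L^{q_i}(\Omega'))$, and taking the maximum of the two estimates yields its boundedness on the intersection, hence on $L^p(\R, Y)$.

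For Part 2, I would invoke the classical Calder\'on--Lozanovskii duality $(L^{q_1'}(\Omega') \cap L^{q_2'}(\Omega'))^* = L^{q_1}(\Omega') + L^{q_2}(\Omega')$, valid because $(\Omega',\mu')$ is $\sigma$-finite. Combined with Part 1 applied to the conjugate exponents $q_1', q_2' \in (1,\infty)$, this realizes $Y$ as the dual of a UMD lattice, and since the UMD property passes to duals (see e.g.\ \cite{HvNVW}), $Y$ itself is UMD. The $\sigma$-Fatou property of $Y$ can then be verified via the same duality: if $0 \leq f_k \nearrow f$ a.e.\ with $\sup_k \|f_k\|_Y < \infty$, then for every $\phi$ in the unit ball of $Y^* = L^{q_1'} \cap L^{q_2'}$, monotone convergence gives $\int f\phi = \lim_k \int f_k \phi \leq \sup_k \|f_k\|_Y$, whence $\|f\|_Y = \sup_k \|f_k\|_Y$. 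The order ideal property is handled by a splitting argument: if $|f| \leq |g_1 + g_2|$ with $g_i \in L^{q_i}(\Omega')$, write $f = f \chi_{\{|g_1| \geq |g_2|\}} + f \chi_{\{|g_1| < |g_2|\}}$, each summand being controlled pointwise by $2|g_i|$.

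The main obstacle is the duality identification in Part 2, which is classical but relies essentially on $\sigma$-finiteness of $\Omega'$. An alternative, more direct route would be to establish $L^p(\R, L^{q_1} + L^{q_2}) \cong L^p(\R, L^{q_1}) + L^p(\R, L^{q_2})$ with equivalent norms and then argue as in Part 1; but this requires producing, for a given $f \in L^p(\R, L^{q_1} + L^{q_2})$, an explicit measurable pointwise-in-$x$ decomposition $f(x) = g(x) + h(x)$ with $g(x) \in L^{q_1}$ and $h(x) \in L^{q_2}$, typically via thresholding $f = f\chi_{\{|f| > t(x)\}} + f\chi_{\{|f| \leq t(x)\}}$ with a judicious choice of the measurable function $t(x)$ coming from the $K$-functional of real interpolation. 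The duality route bypasses this measurable-selection question cleanly and is the one I would carry out in full.
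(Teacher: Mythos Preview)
Your proposal is correct and follows essentially the same route as the paper: for Part 1 you bound the Hilbert transform on $L^p(\R,Y)$ by the maximum of its norms on $L^p(\R,L^{q_i})$ (the paper records this as $\|H\|_{L^2(\R,Y)} \leq \sqrt{2}\max\{\|H\|_{L^2(\R,L^{q_1})},\|H\|_{L^2(\R,L^{q_2})}\}$), and for Part 2 you invoke the duality $(L^{q_1'}\cap L^{q_2'})^* = L^{q_1}+L^{q_2}$ together with stability of UMD under duals, exactly as the paper does. The only difference is cosmetic: the paper cites \cite[2.7.1]{BeL} for the duality and \cite[Theorem B.1.12]{Lin} for the lattice/$\sigma$-Fatou structure, whereas you sketch the latter verifications by hand.
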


\begin{proof}
1. It is immediate from the definition of a Banach lattice \cite[Definition 1.a.1]{LTz} that $Y$ is a Banach lattice.
Moreover, it is easy to check that $Y$ satisfies the axioms of a K\"othe function space from Subsection \ref{subsec-UMD-lattices}.
Finally, it is straightforward to check that the Hilbert transform satisfies $\norm{H}_{L^2(\R,Y)} \leq \sqrt{2} \max\{ \norm{H}_{L^2(\R,L^{q_1})} , \norm{H}_{L^2(\R,L^{q_2})} \}$, so that $Y$ is also a UMD space.

2. It follows from \cite[2.7.1 Theorem]{BeL} that $L^{q_1}(\Omega') + L^{q_2}(\Omega')$ is the dual space of $L^{q_1^*}(\Omega') \cap L^{q_2^*}(\Omega)$.
Then it follows from \cite[Theorem B.1.12]{Lin} that $Y= L^{q_1}(\Omega') + L^{q_2}(\Omega')$ is a Banach lattice with the $\sigma$-Fatou property.
Now it is easy to check that $Y$ is a K\"othe function space.
Moreover, since the dual space of a UMD space is again UMD \cite[Proposition 4.2.17]{HvNVW}, the lemma is proved.
\end{proof}

In the remainder of this subsection, we give two natural examples of sectorial operators, the first one defined on a subspace of the UMD lattices from Lemma \ref{lem-intersection-sum-of-Lq}, the second one  defined on a vector valued amplification of them, see \eqref{equ-Amanns-lattice} below.

\paragraph{Analysis of the Stokes operator}
The following operator provides an example where the UMD lattices from Lemma \ref{lem-intersection-sum-of-Lq} appear naturally.
Indeed, we shall see that the Stokes operator on domains $\Omega \subseteq \R^d$ with suitable regularity has a functional calculus on a subspace of $L^q \cap L^2$ resp. $L^q + L^2$, but not on $L^q$ itself.
We recall that the Stokes system is given by
\[ \begin{cases} - \Delta u + \nabla p & = f \text{ in }\Omega, \\
\text{div}(u) & = 0 \text{ in }\Omega, \\
u & = 0 \text{ on }\partial\Omega,
\end{cases} \]
where $f$ is a given function, and $(u,p)$ is the solution \cite[(1.1)]{She}.
Let $L^q_\sigma(\Omega)$ be the closed subspace of $L^q(\Omega,\C^d)$ generated by $\{ g \in C^\infty_c(\Omega,\C^d) : \: \text{div} (g) = 0 \}$.
Associated with the above system is the Stokes operator 
\[ A_q : D(A_q) \subseteq L^q_\sigma(\Omega) \to L^q_\sigma(\Omega),\: u \mapsto - \Delta u + \nabla p, \]
where $p \in L^q(\Omega)$ is the unique function such that $-\Delta u + \nabla p$ belongs to $L^q_\sigma(\Omega)$ \cite[(1.7), (1.8)]{She}, \cite[p.~404]{KW17}.
Then it is well-known that $A_2$ is self-adjoint positive for any non-empty open $\Omega \subseteq \R^d$ \cite[Remark 5.2.2]{Tol}.
However, when $q \neq 2$, the operator $A_q$ has less good properties.
Namely, for $1 < q \neq 2 < \infty$ there exist certain unbounded smooth domains $\Omega \subseteq \R^d$ such that $A_q$ does not generate a semigroup any more \cite[abstract]{FKZ}. 
This is due to the fact that the Helmholtz projection $P:  L^q(\Omega,\C^d) \to L^q_\sigma(\Omega)$, where $Pu = u - \nabla p$ for appropriate $p \in L^q(\Omega)$ as above, is not bounded \cite{FKZ,Ku08}.

A convenient remedy for this obstruction is to consider the spaces from Lemma \ref{lem-intersection-sum-of-Lq}, that is, one puts
\[ \tilde{L}^{q}(\Omega) = \begin{cases} L^q(\Omega,\C^d) \cap L^2(\Omega,\C^d) & : \: 2 \leq q < \infty \\ L^q(\Omega,\C^d) + L^2(\Omega,\C^d) & : \: 1 < q < 2 \end{cases} \]
and also
\[ \tilde{L}^{q}_\sigma(\Omega) = \begin{cases} L^q_\sigma(\Omega) \cap L^2_\sigma(\Omega) & : \: 2 \leq q < \infty \\ L^q_\sigma(\Omega) + L^2_\sigma(\Omega) & : \: 1 < q < 2 \end{cases} \] 
\cite[p.~258]{FKZ}, \cite[p.~178]{Ku08}.
Then $\tilde{L}^q_\sigma(\Omega)$ is an isometric subspace of $\tilde{L}^q(\Omega)$ (since it is complemented by the Helmholtz projection \cite[Theorem 2.1]{FKZ05}).
It is proved in \cite[Theorem 1.3]{FKZ07} that the appropriate version $\tilde{A}_q$ on $\tilde{L}^q_\sigma(\Omega)$ is $0$-sectorial provided $\Omega$ is a uniform $C^{1,1}$ domain, for any $1 < q < \infty$.
Moreover, in \cite[Theorem 1.4]{FKZ}, it is proved that $\tilde{A}_q$ has maximal regularity for the same type of domain $\Omega$ and $q$.
Finally, in \cite[Theorem 1.1]{Ku08} (see \cite[Remark 1.2]{GeKu} for a correction of a gap in the original argument), it is shown that $\tilde{A}_q$ has an $\HI$ calculus provided that $\Omega$ has a uniform $C^{2 + \epsi}$ boundary and $1 < q < \infty$.

For those domains $\Omega$ such that $\tilde{A}_q$ has in addition a H\"ormander calculus on the UMD space $Z: = \tilde{L}^q_\sigma(\Omega) \subseteq Y: = \tilde{L}^q(\Omega)$, our Theorem \ref{thm-main} applies to this operator and yields $\norm{t \mapsto m(tA) f}_{Y(\Lambda^\beta)} \lesssim \norm{m}_{W^c_2(\frac12,2)} \norm{f}_Z$ for appropriate $c$ (the $L^p(\Omega)$ component in this theorem is then void, i.e. consider a one-point space).
Indeed, one can check that Theorem \ref{thm-main} holds in this form for $0$-sectorial operators acting on subspaces of UMD lattices, by the same proof.
Note that the H\"ormander calculus implies the $\HI(\Sigma_\sigma)$ calculus for any angle $\sigma \in (0,\pi)$, the maximal regularity as well as the $0$-sectoriality.

We remark that it is an open question for which domains $\Omega$, $\tilde{A}_q$ has a H\"ormander calculus on $\tilde{L}^q_\sigma(\Omega)$, or even $A_q$ on ${L}^q_\sigma(\Omega)$.
As a step in this direction, in the case that $\Omega$ is a \textit{bounded} Lipschitz domain in dimension $d \geq 3$, \cite[Proposition 13]{KW17} shows that $A_q$ on the original space $L^q_\sigma(\Omega)$ is $R$-sectorial of angle $0$ and thus has an $\HI(\Sigma_\theta)$ calculus for any $\theta \in (0,\pi)$ \cite[Theorem 16]{KW17}, provided $\left|\frac1q - \frac12\right| < \frac{1}{2d} + \epsi$.
Such $\HI$ calculi for arbitrarily small angles are linked to H\"ormander calculus by \cite[Theorem 4.10]{CDMY}.
As a precursor, see also \cite[Theorem 1.1]{She} which proves that $A_q$ is $0$-sectorial on $L^q_\sigma(\Omega)$ for the same $\Omega$ and $q$.
See also \cite{Tol} for related results, and \cite[Theorem 1.1]{GeKu} on more general domains under the assumption that the above Helmholtz projection $P$ is bounded on $L^q(\Omega,\C^d)$.

\paragraph{Coagulation-fragmentation equations}

Examples of differential operators acting on Bochner spaces (or even more general vector valued spaces, such as spaces of bounded uniformly continuous functions and Besov spaces) are considered in \cite{Ama}.
In \cite[(0.1)]{Ama}, the author defines the elliptic differential operator
\begin{equation}
\label{equ-Amanns-operator}
A = A(x,D) = \sum_{|\alpha| \leq m} a_\alpha(x) D^\alpha
\end{equation}
acting on functions $f : \R^n \to E$, with operator valued coefficients $a_\alpha \in B(E)$, where $E$ is an arbitrary Banach space.
Then he denotes the principal part of $A$ by \cite[p.~155]{Ama}
\[ \sigma A : \: \R^n \times \R^n \to B(E), \: (x,\xi) \mapsto \sum_{|\alpha| = m} a_\alpha(x) \xi^\alpha,\]
and assumes that it is uniformly $(\kappa,\vartheta)$-elliptic.
Moreover, the coefficients $a_\alpha$ are subject to H\"older continuity of some order $\rho \in (0,1)$.
Under these assumptions, \cite[Theorem 5.10]{Ama} shows that given any Banach space $E$ and any $1 \leq p < \infty$, the $L^p(\R^n,E)$ realisation of $-A$ is $(\pi - \vartheta)$-sectorial  after some shift, $\omega - A_p$ for a certain $\omega > 0$, under the conditions that $A$ is $(\kappa,\vartheta)$-elliptic in the above sense for some $\kappa > 0$ and $\vartheta \in (0,\pi)$, and $\sum_{|\alpha| \leq m} \norm{a_{\alpha}}_{\rho,\infty} < \infty$ for some $\rho \in (0,1)$.

It is an open question for which spaces $E$ and exponents $1 \leq p < \infty$ the operator $\omega - A_p$ has moreover a (H\"ormander) functional calculus on $L^p(\R^n,E)$.
Note that one has probably to assume Banach space geometric properties on $E$ and take $1 < p < \infty$, in addition to the hypotheses of \cite{Ama}.

The physical motivation for the operator $A$ from \eqref{equ-Amanns-operator} comes from reaction-diffusion equations \cite[(0.4)]{Ama}
\begin{equation}
\label{equ-Amanns-equation}
\partial_t u - \nabla_x \cdot(a(x,y,u) \nabla u)= f(x,y,u) , \quad (x \in \R^n,\: t > 0) 
\end{equation}
where the solution $u = u(t,x,y)$ depends (a part from time $t> 0$ and space variable $x \in \R^n$ also) on a variable $y \in \Omega'$.
Here, $\Omega' = \N$ in case of discrete models and $\Omega' = \R_+$ in the continuous case.
In order to obtain information on the solution of \eqref{equ-Amanns-equation}, one is naturally led to consider a function space like $L^p(\R^n,Y(\Omega'))$, $\R^n$ accomodating the variable $x$ in the physical problem and $\Omega'$ accomodating the variable $y$.
In the simple case of $a$ being independent of $u$, $A$ becomes 
\[ A(x,D) = a(x,y) \sum_{i=1}^n D_{x_i}^2 + \sum_{i=1}^n D_{x_i} a(x,y) D_{x_i}, \]
$a(x,y)$ (and $D_{x_i} a(x,y)$) being interpreted as pointwise multipliers, that is, $a(x,y) : Y(\Omega') \to Y(\Omega'), \: u \mapsto (y \mapsto a(x,y)u(y))$.
Moreover, according to \cite[(0.7)]{Ama}, a natural function space on which the sectorial operator $\omega - A$ will act, and thus for the solution of \eqref{equ-Amanns-equation}, is the lattice
\begin{equation}
\label{equ-Amanns-lattice}
X = (L^q \cap L^p)(\R^d,L^r(\Omega',d \mu'))
\end{equation}
with $q = 1$, $p > n$ and $r = 1$.
Note that for modified exponents $1 < q,p,r < \infty$, $X$ is turned into a UMD lattice (without the K\"othe property 2. from Subsection \ref{subsec-UMD-lattices} in general unless $q = p = r$), which can be shown in a similar way to Lemma \ref{lem-intersection-sum-of-Lq}.
So it becomes a candidate of function space for our Theorem \ref{thm-main}, provided $\omega - A$ has a H\"ormander calculus on $X$.
Again note that sectoriality in vector-valued $L^p$ spaces proved in \cite[Theorem 5.10]{Ama} is a necessary condition for this H\"ormander calculus.
Examples of operators $A$ having a H\"ormander calculus on $X$ as in \eqref{equ-Amanns-lattice} with $1 < r < \infty$ and $p_0 < p,q < p_0'$ for certain $p_0 \in [1,2)$ are
\begin{itemize}
\item Elliptic divergence form operators $Af = \sum_{|\gamma|,|\delta| = m} (-1)^{|\delta|} \partial^\delta(a_{\gamma \delta} \partial^\gamma f)$ with scalar valued $a_{\gamma \delta} \in L^\infty(\R^d;\R)$ and ellipticity condition as in \cite[Subsection 5.3]{DKK} (in particular the Laplace operator $A = - \Delta \ot \Id_{L^r(\Omega')}$),
\item Schr\"odinger operators with singular potentials $A = - \Delta + V$ and suitable potentials $V : \R^d \to \R$ again as in \cite[Subsection 5.3]{DKK}.
\end{itemize}
Indeed, it is proved in \cite{DKK} that these operators have a H\"ormander calculus on $L^p(\R^d,L^r(\Omega'))$ for the above parameters $r$ and $p$.
Then $\norm{m(A)f}_X = \max\{ \norm{m(A)f}_{L^p(L^r)} , \norm{m(A) f}_{L^q(L^r)} \} \leq C \norm{m}_{\Hor^\beta_2} \max\{ \norm{f}_{L^p(L^r)}, \norm{f}_{L^q(L^r)} \} = C \norm{m}_{\Hor^\beta_2} \norm{f}_X$, so that $A$ also has a H\"ormander calculus on $X$.

\subsection{Necessity of type and cotype assumptions}
\label{subsec-type-cotype-necessary}

In Theorem \ref{thm-main} and its consequences, one remarks that there is a gap between the order $\alpha$ of the H\"ormander calculus of $A$ in the hypothesis and the order $c$ in the conclusion of the vectorial or maximal estimate.
Concerning the quantity $\frac{1}{\type L^p(Y)} - \frac{1}{\cotype L^p(Y)}$ which appears in this gap, note that it is only needed to pass from the $\Hor^\alpha_2$ calculus of $A$ to an $R$-bounded $\Hor^\gamma_2$ calculus of $A$ through Proposition \ref{prop-Hormander-calculus-to-R-Hormander-calculus}.
An indication that the type and cotype of $Y$ (which are equal to the type and cotype of $L^2(\Omega,Y)$) are necessary in order to obtain $R$-bounded functional calculus is given in equation \eqref{equ-1-prop-type-cotype-necessary} of Proposition \ref{prop-type-cotype-necessary} below, already in the case of spectral multipliers of $-\Delta$ and Fourier multipliers on euclidean space.
First we have the following technical lemma.

\begin{lemma}
\label{lem-type-cotype-necessary}
Let $(g_n)_n$ be a normalized sequence in $L^2(\R^d)$ such that $\supp (g_n) \cap \supp(g_k) = \emptyset$ for $n \neq k$.
Let $Y$ be a Banach space.
Assume that $(T_n)_n$ is a sequence in $B(L^2(\R^d))$ such that for any $N \in \N$, $\{T_1 \ot \Id_Y,\ldots,T_N \ot \Id_Y\}$ is $R$-bounded over $L^2(\R^d,Y)$ with a control of $R$-bound $\lesssim N^\gamma$ for some $\gamma \geq 0$.
Finally, assume that $T_n g_n = f$ for all $n \in\N$ with $0 \neq f \in L^2(\R^d)$.
\begin{enumerate}
\item If $\gamma = 0$, then $Y$ is of type $2$.
\item If $\gamma > 0$, then $Y$ is of any type $r$ with $\frac{1}{r} > \gamma + \frac12$.
\end{enumerate}
\end{lemma}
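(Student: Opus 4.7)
The plan is to test the $R$-boundedness hypothesis on elementary tensors of the form $g_n \ot y_n \in L^2(\R^d, Y)$, where $(y_n)_{n=1}^N$ is an arbitrary finite sequence in $Y$. Since $T_n g_n = f$ for every $n$, the left-hand side of the Rademacher averaging inequality simplifies to
\[
\E \bgnorm{\sum_{n=1}^N \epsi_n (T_n \ot \Id_Y)(g_n \ot y_n)}_{L^2(\R^d,Y)} = \|f\|_{L^2} \cdot \E \bgnorm{\sum_{n=1}^N \epsi_n y_n}_Y .
\]
On the right-hand side, disjointness of the supports of the $g_n$ ensures that at each point $x$ at most one term in $\sum_n \epsi_n g_n(x) y_n$ is nonzero, so its $Y$-norm is independent of the signs $\epsi_n$. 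Squaring, integrating, and using $\|g_n\|_{L^2} = 1$, Jensen gives
\[
\E \bgnorm{\sum_{n=1}^N \epsi_n g_n \ot y_n}_{L^2(\R^d,Y)} \leq \bigg(\sum_{n=1}^N \|y_n\|_Y^2\bigg)^{1/2}.
\]
Inserting these two identities into the hypothesis on the $R$-bound yields the master estimate
\[
(\star) \qquad \E \bgnorm{\sum_{n=1}^N \epsi_n y_n}_Y \lesssim N^\gamma \bigg(\sum_{n=1}^N \|y_n\|_Y^2\bigg)^{1/2} .
\]

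For part (1), the estimate $(\star)$ with $\gamma = 0$ is \emph{exactly} the definition of type $2$ for $Y$, and there is nothing more to do.

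For part (2), the remaining task is to convert $(\star)$ into a type-$r$ inequality for every $r$ satisfying $\frac{1}{r} > \gamma + \frac{1}{2}$. I would proceed by a magnitude-block decomposition: by homogeneity we may assume $\max_n \|y_n\|_Y \leq 1$, and we group the indices into blocks $I_k = \{ n : \|y_n\|_Y \in (2^{-k-1}, 2^{-k}]\}$ for $k \geq 0$. Applying $(\star)$ within each single block and using $|I_k| \leq 2^{(k+1) r} a_k$, where $a_k := \sum_{n \in I_k} \|y_n\|_Y^r$, one gets
\[
\E \bgnorm{\sum_{n \in I_k} \epsi_n y_n}_Y \lesssim |I_k|^{\gamma + 1/2} \cdot 2^{-k} \lesssim 2^{-k(1 - r(\gamma + 1/2))} a_k^{\gamma + 1/2} .
\]
The exponent of $2$ is strictly negative by assumption. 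After the further normalisation $\sum_n \|y_n\|_Y^r = 1$ (which forces $a_k \leq 1$ and hence $a_k^{\gamma + 1/2} \leq 1$), summing over $k$ by the triangle inequality closes against a convergent geometric series, producing $\E\|\sum_n \epsi_n y_n\|_Y \lesssim 1 = (\sum_n \|y_n\|_Y^r)^{1/r}$. Undoing the homogeneity is the type-$r$ inequality for $Y$.

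The main obstacle is this last conversion step: one must turn an $\ell^2$-bound with a polynomial prefactor $N^\gamma$ into a genuine $\ell^r$-type inequality. The magnitude-block decomposition is the natural tool, but the accounting is tight — the exchange $|I_k|^{\gamma + 1/2} \cdot 2^{-k} \rightsquigarrow 2^{-k(1 - r(\gamma + 1/2))} a_k^{\gamma + 1/2}$ is precisely what pins down the threshold $\frac{1}{r} > \gamma + \frac{1}{2}$, so some care is needed to verify that the geometric series in $k$ really does converge under this sharp hypothesis.
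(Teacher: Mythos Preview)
Your derivation of the master estimate $(\star)$ is correct and coincides with the paper's argument (in fact the right-hand side is an equality, not just an inequality, so the appeal to Jensen is unnecessary). Part (1) is then immediate, as in the paper.

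For part (2) the two proofs diverge. The paper observes that $(\star)$ is precisely the statement that $Y$ has \emph{weak Rademacher type} $r_0$ with $\tfrac{1}{r_0} = \gamma + \tfrac12$, and then quotes a result of Mascioni \cite{Mas} to the effect that weak type $r_0$ implies ordinary type $r$ for every $r < r_0$. Your magnitude-block decomposition is instead a direct, self-contained proof of this implication in the case at hand: grouping the $y_n$ into dyadic level sets $I_k$, applying $(\star)$ blockwise, and summing the resulting geometric series in $k$. The accounting you sketch is correct; note also that the normalisation $\sum_n \|y_n\|_Y^r = 1$ already forces $\max_n \|y_n\|_Y \leq 1$, so the two reductions need not be introduced separately. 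What you gain is an elementary argument that avoids the external reference; what the paper gains is brevity, since the passage from weak to ordinary type is a known fact.
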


\begin{proof}
We let $N \in \N$ and $y_1,\ldots,y_n \in Y$ arbitrary.
Then
\begin{align*}
\MoveEqLeft
\left( \E \norm{\sum_{j=1}^N \epsi_j y_j}_Y^2 \right)^{\frac12} \cong \left( \E \int_{\R^d} |f(x)|^2 dx \norm{ \sum_j \epsi_j y_j}_Y^2 \right)^{\frac12} \\
& = \left( \E \int_{\R^d} \norm{ \sum_j \epsi_j f(x) y_j}_Y^2 dx \right)^{\frac12} \\
& = \left( \E \int_{\R^d} \norm{ \sum_j \epsi_j T_j g_j (x) y_j}_Y^2 dx \right)^{\frac12} \\
& \lesssim N^\gamma \left( \E \int_{\R^d} \norm{ \sum_j \epsi_j g_j(x) y_j}_Y^2 dx\right)^{\frac12} \\
& = N^\gamma \left( \E \sum_{k=1}^N \int_{\supp (g_k)} \norm{ \sum_j \epsi_j g_j(x) y_j}_Y^2 dx \right)^{\frac12} \\
& = N^\gamma \left( \E \sum_{k=1}^N \int_{\supp (g_k)} |g_k(x)|^2 \norm{y_k}_Y^2 dx \right)^{\frac12} \\
& = N^\gamma \left( \sum_{k=1}^N \norm{y_j}_Y^2 \right)^{\frac12}.
\end{align*}
Here, we have used the $R$-boundedness of $\{T_1 \ot \Id_Y,\ldots,T_N \ot \Id_Y\}$ in the fourth line, the disjointness of the supports of the $g_j$ in the penultimate line, and the $L^2$-normalisation of the $g_j$ in the last line.
If $\gamma = 0$, we immediately deduce that $Y$ is of type $2$.
If $\gamma > 0$, then we deduce that $Y$ is of weak Rademacher type $r$ with $\frac1r = \gamma + \frac12$, which implies that $Y$ is actually of usual Rademacher type $r$ with $\frac1r > \gamma + \frac12$ \cite{Mas}.
\end{proof}

In the following proposition, we write $B_\infty(\R^d)$ for the Banach algebra of bounded Borel functions over $\R^d$ (similarly $B_\infty(\R_+)$), and for $\phi \in B_\infty(\R^d)$, $M_\phi$ the Fourier multiplier with symbol $\phi$.
Moreover, for $\beta > \frac{d}{q}$ we write $\Hor^\beta_q(\R^d) = \{ m \in C_b(\R^d) :\: \norm{m}_{\Hor^\beta_q(\R^d)} = \sup_{t > 0} \norm{\phi m(t \cdot)}_{W^{\beta}_q(\R^d)} < \infty \}$ the H\"ormander symbols over $d$-dimensional space.
Hereby, $\phi \in C^\infty_c(\R^d)$ is a non-zero radial function with support in $B(0,4)\backslash B(0,1)$.

\begin{prop}
\label{prop-type-cotype-necessary} 
Let $Y$ be a Banach space and $d \in \N$ a dimension.
\begin{enumerate}
\item Assume that $-i \frac{d}{dx} \ot \Id_Y$ has an $R$-bounded $B_\infty(\R)$ calculus over $L^2(\R,Y)$.
Then $Y$ is isomorphic to a Hilbert space.
The same conclusion holds if $-\Delta \ot \Id_Y$ has an $R$-bounded $B_\infty(\R_+)$ calculus over $L^2(\R,Y)$ or if the family $\{ M_\phi \ot \Id_Y : \: \phi \in B_\infty(\R^d),\: \norm{\phi}_{B_\infty(\R^d)} \leq 1 \}$ is $R$-bounded over $L^2(\R^d,Y)$.
\item Assume that $-i \frac{d}{dx} \ot \Id_Y$ has an $R$-bounded $\Hor^\alpha_s(\R^1)$ calculus over $L^2(\R,Y)$ for some $1 < s < \infty$ and $\alpha > \frac1s$.
Then $Y$ is of type $p$ and cotype $q$ where $\frac1p, 1 - \frac1q > \alpha + \frac12$.
E.g. if $Y$ is an $L^r$ space, then necessarily 
\begin{equation}
\label{equ-1-prop-type-cotype-necessary}
\alpha \geq | \frac1r - \frac12 | = \frac{1}{\type L^r} - \frac{1}{\cotype L^r} .
\end{equation}
The same holds if $-\Delta \ot \Id_Y$ has an $R$-bounded $\Hor^\alpha_s(\R_+)$ calculus over $L^2(\R,Y)$ or if the family $\{ M_\phi \ot \Id_Y :\: \phi \in \Hor^{\alpha \cdot d}_s(\R^d) , \: \norm{\phi}_{\Hor^{\alpha \cdot d}_s(\R^d)} \leq 1 \}$ is $R$-bounded over $L^2(\R^d,Y)$.
\item Conversely, if $Y$ is an $L^r$ space with $1 < r < \infty$, then for any $\alpha >  | \frac1r - \frac12 |$, if $d$ is sufficiently large, then $-\Delta \ot \Id_Y$ has an $R$-bounded $\Hor^{\alpha \cdot d}_2(\R_+)$ calculus over $L^2(\R^d,Y)$.
\end{enumerate}
\end{prop}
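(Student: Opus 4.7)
My plan is to prove parts 1 and 2 through a common construction of spectral multipliers compatible with Lemma \ref{lem-type-cotype-necessary} (for type) together with a dual cotype analogue that I will derive, and to deduce part 3 by combining Theorem \ref{thm-Hoermander} with Proposition \ref{prop-Hormander-calculus-to-R-Hormander-calculus}.

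\textbf{Parts 1 and 2.} I fix a bump $\phi \in C^\infty_c(\R^d \setminus \{0\})$ with $\phi \equiv 1$ on a smaller set, and choose $f \in L^2(\R^d)$ Schwartz with $\hat f$ supported in $\{\phi = 1\}$, $L^2$-normalized. For each $N \in \N$ I place a spread-out grid $\{x_n\}_{n=1}^{N^d} \subseteq 10 \cdot \{0, \ldots, N-1\}^d$ and set $g_n(x) = f(x - x_n)$; the $g_n$ are Schwartz, $L^2$-normalized, and pairwise essentially disjoint in the sense that $\|g_n\|_{L^2(|x - x_n| > R)}$ is arbitrarily small by Schwartz decay of $f$, so that Lemma \ref{lem-type-cotype-necessary} applies up to a negligible perturbation. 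The Fourier multipliers $m_n^+(\xi) = \phi(\xi) e^{i x_n \cdot \xi}$ and $m_n^-(\xi) = \phi(\xi) e^{-i x_n \cdot \xi}$ satisfy $M_{m_n^+}(g_n) = f$ and $M_{m_n^-}(f) = g_n$ respectively (using $\phi \hat f = \hat f$). Direct estimates give $\|m_n^\pm\|_{B_\infty(\R^d)} \leq \|\phi\|_\infty$, bounded uniformly, and $\|m_n^\pm\|_{\Hor^{\alpha d}_s(\R^d)} \lesssim (1 + |x_n|)^{\alpha d} \lesssim N^{\alpha d}$ by a standard modulation estimate since $\phi$ is a fixed compactly supported bump. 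Under the hypothesis of case (c) of part 1 (resp. part 2), the family $\{M_{m_n^+} \otimes \Id_Y\}_{n=1}^{N^d}$ is $R$-bounded with bound uniform in $N$ (resp. $\lesssim N^{\alpha d} = (N^d)^\alpha$); Lemma \ref{lem-type-cotype-necessary} with $\gamma = 0$ (resp. $\gamma = \alpha$ applied to the $N^d$-element family) yields type $2$ of $Y$ (resp. type $r$ for every $\tfrac{1}{r} > \alpha + \tfrac{1}{2}$). Case (a) is the $d = 1$ specialization, while case (b) is handled identically after arranging $\hat f$ to be supported in $(0, \infty)$ and replacing $m_n^\pm(\xi)$ by the spectral multipliers $\phi(\sqrt{\lambda}) e^{\pm i n \sqrt{\lambda}}$ of $-\Delta$, which act as $e^{\pm i n \xi}$ on positive-frequency functions.

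\textbf{Cotype.} For the cotype conclusion I derive a dual to Lemma \ref{lem-type-cotype-necessary}: if $T_n f = g_n$ for essentially disjointly supported $L^2$-normalized $g_n$ and $\{T_n \otimes \Id_Y\}_{n \leq N}$ is $R$-bounded with bound $\lesssim N^\gamma$, then $\left( \sum_n \|y_n\|_Y^2 \right)^{1/2} \leq C N^\gamma \E\left\| \sum_n \epsi_n y_n \right\|_Y$ for every finite sequence $(y_n) \subset Y$. The proof runs the chain in Lemma \ref{lem-type-cotype-necessary} in reverse: essential disjointness gives $\left( \sum_n \|y_n\|^2 \right)^{1/2} \cong \|\sum_n \epsi_n g_n y_n\|_{L^2(Y)} = \|\sum_n \epsi_n (T_n \otimes \Id_Y)(f \otimes y_n)\|_{L^2(Y)}$, to which $R$-boundedness applies, together with the identity $\|\sum_n \epsi_n f \otimes y_n\|_{L^2(Y)} = \|f\|_2 \|\sum_n \epsi_n y_n\|_Y$. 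Applying this with the multipliers $m_n^-$ constructed above: for part 1 we obtain cotype $2$ directly, and combined with type $2$, Kwapie\'n's theorem concludes that $Y$ is isomorphic to a Hilbert space. For part 2 we arrive at the weak estimate $\left( \sum_n \|y_n\|^2 \right)^{1/2} \lesssim M^\alpha \E\|\sum_n \epsi_n y_n\|$ with $M = N^d$; a Maurey-Pisier-Talagrand style argument, parallel to Mascioni \cite{Mas} invoked in the type direction of the Lemma, extracts strict cotype $q$ for every $1 - \tfrac{1}{q} > \alpha + \tfrac{1}{2}$.

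\textbf{Part 3.} This combines two cited results. Theorem \ref{thm-Hoermander} applied to $-\Delta$ on $\R^d$ (which satisfies Gaussian estimates), with $p = 2$ and $Y = L^r(\Omega')$ (taking $p_Y = r \wedge 2$, $q_Y = r \vee 2$, so that $\alpha(2, p_Y, q_Y) = |1/r - 1/2|$), yields a bounded $\Hor^\beta_2$ calculus on $L^2(\R^d, L^r)$ for every $\beta > |1/r - 1/2| \cdot d + \tfrac{1}{2}$. Proposition \ref{prop-Hormander-calculus-to-R-Hormander-calculus}, applicable since $L^2(\R^d, L^r)$ has property $(\alpha)$ by Lemma \ref{lem-property-alpha}, then upgrades this to an $R$-bounded $\Hor^\gamma_2$ calculus for any $\gamma > \beta + \max(\tfrac{1}{2}, |1/r - 1/2|)$. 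Choosing $d$ large enough that $(\alpha - |1/r - 1/2|) \cdot d > \tfrac{1}{2} + \max(\tfrac{1}{2}, |1/r - 1/2|)$ allows us to take $\gamma \leq \alpha d$, yielding the claim. The principal obstacle I anticipate lies in the cotype step of part 2: passing from the weak $L^2$-cotype estimate with its $M^\alpha$-factor to strict cotype requires a careful Maurey-Pisier-type theorem, since a naive Hölder inequality goes in the wrong direction when $q \geq 2$.
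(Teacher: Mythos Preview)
Your argument is essentially correct but takes a more laborious route than the paper at two junctures. First, the paper avoids your ``essentially disjoint'' perturbation entirely by choosing $g_j = 1_{[j,j+1)}$ (and $g_j = c \cdot 1_{B(0,\frac12)}(\cdot - j)$ in $\R^d$), which have genuinely disjoint supports and feed directly into Lemma~\ref{lem-type-cotype-necessary}; the corresponding multipliers are $m_j(\xi) = e^{ij\cdot\xi}$ in the $B_\infty$ case and $m_j(\xi) = (1+|\xi|^2)^{-\alpha d/2} e^{ij\cdot\xi}$ in the H\"ormander case, the latter having $\Hor^{\alpha d}_s$-norm $\lesssim (1+|j|)^{\alpha d}$ by \cite[Lemma~3.9]{KrW3}. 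Your Schwartz translates with fixed spacing~$10$ do \emph{not} have small overlap uniformly in $N$, so the perturbation you sketch requires genuine additional work that the paper's choice sidesteps. Second, and more significantly, the paper dispatches cotype by a one-line duality: since $(M_\phi \otimes \Id_Y)^* = M_{\bar\phi} \otimes \Id_{Y^*}$, $\|\bar\phi\| = \|\phi\|$ in both symbol classes, and $R$-boundedness is stable under adjoints, the hypothesis transfers verbatim to $Y^*$; then $Y^*$ has type $p$ by the same argument, hence $Y$ has cotype $p'$ \cite[11.10]{DiJT}. This bypasses your dual lemma and, crucially for part~2, the weak-cotype-to-cotype step that you correctly flag as the principal obstacle.

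For case~(b) the paper takes yet another shortcut: the $R$-bounded calculus hypothesis forces $Y$ to be UMD (Guerre-Delabri\`ere, \cite[Corollary~10.5.2]{HvNVW2}), so the Hilbert transform $H = M_{1_{[0,\infty)}}$ is bounded on $L^2(\R,Y)$ and the identity $m(-i\frac{d}{dx}) = m_+((-\Delta)^{\frac12})H + m_-((-\Delta)^{\frac12})(\Id-H)$ reduces~(b) to~(a). Your direct handling via $\phi(\sqrt\lambda)e^{\pm in\sqrt\lambda}$ also works. For part~3, the paper cites \cite[Corollary~4.13]{DKK}, which already delivers an $R$-bounded calculus at the right order; your combination of Theorem~\ref{thm-Hoermander} with Proposition~\ref{prop-Hormander-calculus-to-R-Hormander-calculus} is an equivalent route from within the paper's own statements.
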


\begin{proof}
1. We apply Lemma \ref{lem-type-cotype-necessary} with $g_j = 1_{[j,j+1)}$ and $T_j = M_{e^{ij(\cdot)}}$ the shift operator mapping $g_j$ to $f = g_0$.
Since $T_j = \exp(ij (-i \frac{d}{dx}))$ is a $B_\infty(\R)$ spectral multiplier of $-i \frac{d}{dx}$, we infer by the assumptions that $\{ T_j \ot \Id:\:j \in \N\}$ is $R$-bounded over $L^2(\R,Y)$.
Thus, Lemma \ref{lem-type-cotype-necessary} applies with $\gamma = 0$, and $Y$ is of type $2$.
Moreover, since $m(-i \frac{d}{dx})^* = \ovl{m}(-i \frac{d}{dx})$ with $\norm{m}_{B_\infty(\R)} = \norm{\ovl{m}}_{B_\infty(\R)}$ and $R$-boundedness is stable under passage to adjoints, the assumptions imply that $-i \frac{d}{dx} \ot \Id_{Y^*}$ also has an $R$-bounded $B_\infty(\R)$ calculus, over $L^2(\R,Y^*)$. Thus, we deduce that $Y^*$ is also of type $2$, so by \cite[11.10 Proposition p.220]{DiJT}, $Y$ is of cotype $2$.
Finally, by a well-known result of Kwapien, any space of type $2$ and cotype $2$ is isomorphic to a Hilbert space.

If $-\Delta \ot \Id_Y$ has an $R$-bounded $B_\infty(\R_+)$ calculus, then by Guerre-Delabri\`ere's theorem \cite[Corollary 10.5.2 p.441]{HvNVW2}, $Y$ is a UMD space.
Moreover, since $m \mapsto m((\cdot)^2)$ is an isometry in $B_\infty(\R_+)$, also $(-\Delta)^{\frac12} \ot \Id_Y$ has an $R$-bounded $B_\infty(\R_+)$ calculus.
For any $m \in B_\infty(\R)$, write $m_+(\xi) = m(\xi) 1_{[0,\infty)}(\xi)$ and $m_-(\xi) = m(-\xi) 1_{(0,\infty)}(\xi)$.
Then 
$\norm{m_+}_{B_\infty(\R_+)},\norm{m_-}_{B_\infty(\R_+)} \leq \norm{m}_{B_\infty(\R)}$ and 
\[m(-i\frac{d}{dx}) = m_+((-\Delta)^{\frac12})H + m_-((-\Delta)^{\frac12})(\Id - H),\]
where $H = M_{1_{[0,\infty)}}$ is the Hilbert transform.
Since $Y$ is UMD, $H \ot \Id_Y$ and $(\Id - H) \ot \Id_Y$ are bounded on $L^2(\R,Y)$.
We infer that $-i \frac{d}{dx} \ot \Id_Y$ then has an $R$-bounded $B_\infty(\R)$ calculus over $L^2(\R,Y)$, and can apply the first part of the proof.

If the family $\{ M_\phi \ot \Id_Y : \: \phi \in B_\infty(\R^d),\: \norm{\phi}_{B_\infty(\R^d)} \leq 1 \}$ is $R$-bounded over $L^2(\R^d,Y)$, we argue similarly with Lemma \ref{lem-type-cotype-necessary} using $g_j(x) = c \cdot 1_{B(0,\frac12)}(x-j)$ and $T_j g(x) = g(x+j)$ with $j \in \N^d$.

2. We again apply Lemma \ref{lem-type-cotype-necessary} with $g_j = 1_{[j,j+1)}$, $j \in \N$, but this time, we choose $T_j = m_j(-i \frac{d}{dx})$ with $m_j(\xi) = \frac{1}{(1+ \xi^2)^{\alpha/2}} e^{ij \xi}$.
It is essentially shown in \cite[Lemma 3.9]{KrW3} that $\norm{m_j}_{\Hor^\alpha_s(\R^1)} \lesssim (1 + j)^\alpha$.
We then have $T_jg_j = M_\phi g_0 =: f \neq 0$ with $\phi(\xi) = \frac{1}{(1 + \xi^2)^{\alpha/2}}$.
Since the assumptions on $R$-bounded calculus imply that $\{T_1 \ot \Id_Y,\ldots,T_N \ot \Id_Y\}$ is $R$-bounded with control $\lesssim N^\alpha$, Lemma \ref{lem-type-cotype-necessary} implies that $Y$ is of type $p$ with $\frac{1}{p} > \alpha + \frac12$.
For the cotype statement, similarly to part 1., we can employ a duality argument.
Indeed, $\norm{m}_{\Hor^\alpha_s(\R^1)} = \norm{\ovl{m}}_{\Hor^\alpha_s(\R^1)}$ and $(m(-i \frac{d}{dx}) \ot \Id_Y)^* = \ovl{m}(-i \frac{d}{dx}) \ot \Id_{Y^*}$.
Thus, $-i \frac{d}{dx} \ot \Id_{Y^*}$ has an $R$-bounded $\Hor^\alpha_s(\R^1)$ calculus over $L^2(\R,Y^*)$.
We infer by the above that $Y^*$ has type $q^*$ with $\frac{1}{q^*} > \alpha + \frac12$, so that by \cite[11.10 Proposition]{DiJT}, $Y$ has cotype $q$ with $1 - \frac{1}{q} > \alpha + \frac12$.
Since an $L^r$ space satisfies $\max\left(\frac{1}{\type L^r},1 - \frac{1}{\cotype L^r}\right) = |\frac1r - \frac12| + \frac12$ (with no better type and cotype), we infer that necessarily, $\alpha$ satisfies \eqref{equ-1-prop-type-cotype-necessary}.

If $-\Delta \ot \Id_Y$ has an $R$-bounded $\Hor^\alpha_s(\R_+)$ calculus, then similarly as in 1., we deduce that $-i \frac{d}{dx} \ot \Id_Y$ has an $R$-bounded $\Hor^\alpha_s(\R^1)$ calculus.
Indeed, again by \cite[Corollary 10.5.2 p.441]{HvNVW2}, $Y$ must be a UMD space so that $H \ot \Id_Y = M_{1_{[0,\infty)}} \ot \Id_Y$ is bounded on $L^2(\R,Y)$.
Then for $m \in \Hor^\alpha_s(\R^1)$, we write
$m(-i \frac{d}{dx}) = m_+((-\Delta)^{\frac12}) H + m_-((-\Delta)^{\frac12}) (\Id - H)$.
Since $m \mapsto m((\cdot)^2)$ is an isomorphism on $\Hor^\alpha_s(\R_+)$, $(-\Delta)^{\frac12} \ot \Id_Y$ has an $R$-bounded $\Hor^\alpha_s(\R_+)$ calculus over $L^2(\R,Y)$.
Then we conclude since $\norm{m_+}_{\Hor^\alpha_s(\R_+)} , \norm{m_-}_{\Hor^\alpha_s(\R_-)} \leq \norm{m}_{\Hor^\alpha_s(\R^1)}$.

Finally, in case that $\{ M_\phi \ot \Id_Y :\: \phi \in \Hor^{\alpha \cdot d}_s(\R^d) , \: \norm{\phi}_{\Hor^{\alpha \cdot d}_s(\R^d)} \leq 1 \}$ is $R$-bounded over $L^2(\R^d,Y)$, we again use Lemma \ref{lem-type-cotype-necessary}, but this time with $g_j(x) = c \cdot 1_{B(0,\frac12)}(x-j)$ indexed by $j \in \N^d$.
Moreover, we take $T_j = M_{m_j}$ with $m_j(\xi) = \frac{1}{(1 + |\xi|^2)^{\alpha \cdot d/2}} e^{i j \cdot \xi}$.
Similarly to the above, one can show that $\norm{m_j}_{\Hor^{\alpha \cdot d}_s(\R^d)} \leq (1 + |j|)^{\alpha \cdot d}$.
We have $T_j g_j = M_\phi g_0 =:f \neq 0$ with $\phi(\xi) = \frac{1}{(1 + |\xi|^2)^{\alpha \cdot d}}$.
Then for $N \in \N$, the family $\{ T_j \ot \Id_Y = T_{(j_1,\ldots,j_d)} \ot \Id_Y :\: j_1,\ldots,j_d \in \{1,\ldots,N\} \}$ is of cardinality $N^d$ and of $R$-bound controlled by $N^{\alpha \cdot d}$.
Thus, Lemma \ref{lem-type-cotype-necessary} applies with $\gamma = \frac{\alpha \cdot d}{d} = \alpha$, and thus yields that $Y$ is of type $r$ with $\frac1r > \alpha + \frac12$ (and similarly for the cotype by a duality argument).

3. Since the heat semigroup satisfies Gaussian estimates and an $L^r$ space is $r$-convex and $r$-concave, \cite[Corollary 4.13, (4.1)]{DKK} applies and yields a $\Hor^{\beta \cdot d + \frac12}_2(\R_+)$ calculus for $-\Delta \ot \Id_{L^r}$ on $L^2(\R^d,L^r)$ with $\beta > \frac{1}{\type L^r} - \frac{1}{\cotype L^r} = |\frac1r - \frac12|$.
If also $\alpha > |\frac1r - \frac12|$, then note $\alpha \cdot d = \beta \cdot d + \frac12 \Longleftrightarrow \alpha = \beta + \frac{1}{2 \cdot d}$, so that we conclude by taking $\beta$ in between $|\frac1r - \frac12|$ and $\alpha$, and $d$ sufficiently large so that $\frac{1}{2\cdot d} = \alpha - \beta$.
\end{proof}

\section{Concluding remarks}

Already for classical (i.e. non-maximal/$q$-variational) H\"ormander multiplier theorems, a nice description of the exact norm $\|m(A)\|_{L^p \to L^p}$ in terms of a function norm $\|m\|$ of the spectral multiplier is not known today.
This problem is equally present for our maximal spectral multipliers in this article and only a step by step progression of sufficient conditions in the form $\|t \mapsto m(tA)(\cdot)\|_{L^p \to L^p(L^\infty(\R_+))} \lesssim \|m\|_{\ldots}$ seems to be manageable.
In this direction, it would be interesting to know whether in the context of Corollary \ref{cor-main-full-support} of semigroup generators, one can relax the summation condition to
\[ \sum_{n \in \Z} \frac{\|m(2^n\cdot)\dyad_0\|_{W^c_2(\R)}}{1 + |n|} < \infty \]
as in the euclidean case \cite{CGHS,GHS}, or with an additional factor $\log(|n|+2)$ as in \cite{Choi}.
As another possible relaxation, one can ask the question, whether
\[ \sum_{n \in \Z} \|m(2^n \cdot)\dyad_0\|_{W^c_q(\R)}^2 < \infty \]
is sufficient for a maximal estimate, which is known to be true for the euclidean Laplacian \cite[(1.3)]{CGHS}.
Also maximal estimates for spectral multipliers that do not decay at $\infty$ are not well understood.
Already the scalar case $Y = \C$ would be interesting.

As a partial result for radial spectral multipliers of the euclidean Laplacian, and on the radial part $L^p_{\rad}(\R^d)$, see \cite{HNS} for a description of $\|m(-\Delta)\|_{L^p\to L^p}$ and \cite{Kim} for a description of $\|t\mapsto m(-t\Delta)(\cdot)\|_{L^p\to L^p(L^\infty(\R_+))}$ in terms of the associated convolution kernel of $m(-\Delta)$.

\section{Acknowledgments}

The authors acknowledge financial support through the research program ANR-18-CE40-0021 (project HASCON).
They also respectively acknowledge financial support through the research program ANR-18-CE40-0035  (project REPKA) and ANR-17-CE40-0021 (project FRONT).
We thank Peer Kunstmann for explanations concerning the Stokes operator from Subsection \ref{subsec-examples-UMD-lattices}.

\vspace{0.2cm}

\footnotesize{
\noindent Luc Deleaval \\
\noindent
Laboratoire d'Analyse et de  Math\'ematiques Appliqu\'ees (UMR 8050)\\
Universit\'e Gustave Eiffel \\
5, Boulevard Descartes, Champs sur Marne\\
77454 Marne la Vall\'ee Cedex 2\\
luc.deleaval@univ-eiffel.fr\hskip.3cm
}

\vspace{0.2cm}
\footnotesize{
\noindent Christoph Kriegler\\
\noindent
Universit\'e Clermont Auvergne,\\
CNRS,\\
LMBP,\\
F-63000 CLERMONT-FERRAND,\\
FRANCE \\
URL: \href{https://lmbp.uca.fr/~kriegler/indexenglish.html}{https://lmbp.uca.fr/{\raise.17ex\hbox{$\scriptstyle\sim$}}\hspace{-0.1cm} kriegler/indexenglish.html}\\
christoph.kriegler@uca.fr\hskip.3cm
}
\end{document}